\newtheorem{theorem}{Theorem}[section]
\newtheorem{proposition}[theorem]{Proposition}
\newtheorem{lemma}[theorem]{Lemma}
\theoremstyle{definition}
\newtheorem{definition}{Definition}
\newtheorem{claim}{Claim}[theorem]
\theoremstyle{remark}
\newtheorem{remark}[theorem]{Remark}
\theoremstyle{conjecture}
\newtheorem{conjecture}{Conjecture}[section]
\newcommand{\rG}{\mathrm G}
\newcommand{\rP}{\mathrm P}
\newcommand{\rQ}{\mathrm Q}
\newcommand{\rL}{\mathrm L}
\newcommand{\rU}{\mathrm U}
\newcommand{\rW}{\mathrm W}
\newcommand{\rT}{\mathrm T}
\newcommand{\rB}{\mathrm B}
\newcommand{\fg}{\mathfrak g}
\newcommand{\fb}{\mathfrak b}
\newcommand{\ft}{\mathfrak t}
\newcommand{\fp}{\mathfrak p}
\newcommand{\fl}{\mathfrak l}
\newcommand{\fn}{\mathfrak n}
\newcommand{\FT}{\mathrm{FT}}
\newcommand{\Res}{\mathrm{Res}}
\newcommand{\res}{\mathrm{res}}
\newcommand{\Z}{\mathbb Z}
\newcommand{\N}{\mathbb N}
\newcommand{\Q}{\mathbb Q}
\newcommand{\C}{\mathbb C}
\newcommand{\HH}{\mathcal H}
\newcommand{\M}{\mathcal{M}}
\newcommand{\X}{\mathcal{X}}
\newcommand{\ZZ}{\mathcal{Z}}
\newcommand{\mc}{\mathcal}
\title{A description of the integral depth-$r$ Bernstein center}
\author{Sarbartha Bhattacharya and Tsao-Hsien Chen }
\date{}
\begin{document}
\maketitle

\begin{abstract}
In this paper we give a description 
of the depth-$r$ Bernstein 
center $\ZZ^r(G)$ for non-negative integers $r$ of a reductive simply connected group $G$ over a non-archimedean local field $\mathbbm k$
as a limit of depth-$r$ standard parahoric Hecke algebras. Using the description, we construct maps from the algebra of stable functions 
on the  $r$-th Moy-Prasad filtration quotient of hyperspecial parahorics to 
$\ZZ^r(G)$ and use them to attach to each depth-$r$ irreducible representation $\pi$ of $G(\mathbbm k)$ an invariant $\theta(\pi)$, called the depth-$r$ Deligne-Lusztig parameters of $\pi$. We show that $\theta(\pi)$ is equal to the semi-simple part of minimal $K$-types of $\pi$. 
\end{abstract}
\tableofcontents
\section{Introduction}
\subsection{Main results}
Let $G$ be a split connected reductive simply connected algebraic group defined over a non-archimedean local field $\mathbbm k$. 
Let $R(G)$ denote the category of smooth complex representations of $G(\mathbbm k)$ and 
let $\mathcal{Z}(G)=\mathrm{End}(\mathrm{Id}_{R(G)})$ denote the Bernstein center. For any non-negative rational number $r\in\mathbb Q_{\geq0}$,
let $R(G)_{\leq r}$ (resp, $R(G)_{> r}$) denote the full subcategory of representations whose irrreducible subquotients have depth $\leq r $ (resp, depth $>r$).
Results of Bernstein and Moy-Prasad (\cite{Ber, MP94, MP96}) imply that the category $R(G)$ decomposes as a direct sum
 $R(G) = R(G)_{\leq r} \oplus R(G)_{> r}$, and hence the Bernstein center also decomposes as $\ZZ(G) = \ZZ^r(G) \oplus \ZZ^{>r}(G)$.
 We aim to provide  a description of the depth-$r$ Bernstein center $\ZZ^r(G)$.
 In this paper, we consider the case of 
integral depths. The case of rational depths will be treated in the forthcoming work.

To state the first result,  
let $Par$ be the set of standard parahoric subgroups of $G(\mathbbm k)$ 
containing a fixed Iwahori subgroup $I$.
We fix a Haar measure $\mu$ on $G(\mathbbm k)$, and for any $P\in Par$ and $r\in\mathbb Z_{\geq0}$, we denote by $P_r^+$ the $r$-th congrnence subgroup of the pro-unipotent radical $P^+$ of $P$.
We define 
$$\mathcal{M}^r_{P} := C^{\infty}_c(\frac{G(\mathbbm k)/P_r^+}{P}) $$
to be the algebra of compactly supported smooth functions on $G(\mathbbm k)$ which are $P_r^+$ bi-invariant and $P$ conjugation invariant with 
multiplication given by the convolution product with respect to the Haar measure $\mu$.
We will call $\mathcal M_P^r$, $P\in Par$,
the depth-$r$ standard parahoric Hecke algebras.
For any $P\subset Q \in Par$, we have an algebra map 
\begin{align*}
\phi^r_{P,Q}:\M^r_{Q} &\longrightarrow \M^r_{P}\\
h &\longmapsto h*\delta_{P_r^+}  
\end{align*}
 where $\delta_{P_r^+}$
 is the multiplicative unit of $\mathcal M_P^r$.
With the above defined maps, we have an inverse system $\{\M^r_{P}\}_{P \in Par}$ and we define $A^r(G)$ to be the inverse limit of the algebras $\M^r_{P}$
$$ A^r(G) := \lim_{P\in Par} \M^r_{P}$$
Our first main result is the following description of $\ZZ^r(G)$ as a limit of 
depth-$r$ standard parahoric Hecke algebras.
  
\begin{theorem}\label{main}
    There is an explicit algebra isomorphism 
    $[A^r]:A^r(G)\stackrel{\simeq}\longrightarrow\ZZ^r(G)$.
\end{theorem}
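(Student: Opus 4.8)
The plan is to construct $[A^r]$ by letting a compatible family act on each object of $R(G)_{\leq r}$ through a resolution coming from the Bruhat--Tits building, to construct an inverse ``extraction'' map directly from the universal property of the Bernstein center, and to check the two are mutually inverse. It is convenient to first restate $A^r(G)$ geometrically. Since $G$ is simply connected, the stabilizer in $G(\mathbbm k)$ of a facet $F$ of the building $\mathcal B$ is the (self-normalizing) parahoric $\mathrm{P}_F$, and $\mathrm{P}_F$ is $G(\mathbbm k)$-conjugate to a unique $P\in Par$; writing $\mathrm{P}_{F,r}^+$ for the $r$-th congruence subgroup of the pro-unipotent radical of $\mathrm{P}_F$ (a $G(\mathbbm k)$-conjugate of $P_r^+$), a family $(z_P)_{P\in Par}\in A^r(G)$ is the same datum as a $G(\mathbbm k)$-equivariant rule $F\mapsto z_F\in\mathcal H(G(\mathbbm k),\mathrm{P}_{F,r}^+)$ with each $z_F$ invariant under $\mathrm{P}_F$-conjugation and satisfying $z_F=z_{F'}*\delta_{\mathrm{P}_{F,r}^+}$ whenever $F'$ is a face of $F$; this uses that the transition maps $\phi^r_{P,Q}$ are exactly the ``pass to a face'' maps.

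To build $[A^r]$, recall the Moy--Prasad description of $R(G)_{\leq r}$: it is generated by the objects $\mathcal H(G(\mathbbm k))*\delta_{P_r^+}$, $P\in Par$, and more precisely every $V\in R(G)_{\leq r}$ is computed by a level-$r$ version of the Schneider--Stuhler complex
\[
C_\bullet(V):\quad\cdots\longrightarrow\bigoplus_{\dim F=i}\mathrm{ind}_{\mathrm{P}_F}^{G(\mathbbm k)}V^{\mathrm{P}_{F,r}^+}\longrightarrow\cdots\longrightarrow\bigoplus_{\dim F=0}\mathrm{ind}_{\mathrm{P}_F}^{G(\mathbbm k)}V^{\mathrm{P}_{F,r}^+}\longrightarrow V\longrightarrow 0,
\]
the terms being summed over facets of a fixed dimension (with the usual orientation signs); exactness, and the identification of the target of the augmentation with $V$ itself when $V$ has depth $\leq r$, follows from Schneider--Stuhler (and, for general level, Meyer--Solleveld) together with the characterization of $R(G)_{\leq r}$. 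Given a family $(z_F)$ as above, each $z_F$ acts by convolution on $V^{\mathrm{P}_{F,r}^+}$; this action is $\mathrm{P}_F$-equivariant because $z_F$ is $\mathrm{P}_F$-conjugation-invariant, so it induces a $G(\mathbbm k)$-endomorphism of the summand $\mathrm{ind}_{\mathrm{P}_F}^{G(\mathbbm k)}V^{\mathrm{P}_{F,r}^+}$, hence an endomorphism $z_{C_i(V)}$ of each term of $C_\bullet(V)$. The decisive point is that $(z_{C_i(V)})_i$ is a chain map: the differentials of $C_\bullet(V)$ are alternating sums of the maps induced by the inclusions $V^{\mathrm{P}_{F,r}^+}\hookrightarrow V^{\mathrm{P}_{F',r}^+}$ for $F'$ a face of $F$, and for $v\in V^{\mathrm{P}_{F,r}^+}$ one computes $z_F*v=(z_{F'}*\delta_{\mathrm{P}_{F,r}^+})*v=z_{F'}*v$. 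Consequently $z_{C_\bullet(V)}$ descends to an endomorphism $z_V$ of $H_0(C_\bullet(V))=V$; functoriality of the resolution in $V$ makes $V\mapsto z_V$ a natural transformation of $\mathrm{Id}_{R(G)_{\leq r}}$, that is, an element of $\ZZ^r(G)$, and $(z_P)\mapsto(z_V)$ is plainly a unital algebra homomorphism $[A^r]\colon A^r(G)\to\ZZ^r(G)$.

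For the inverse, given $z\in\ZZ^r(G)$ set $z_P:=z_{\mathcal H(G(\mathbbm k))*\delta_{P_r^+}}(\delta_{P_r^+})\in\delta_{P_r^+}*\mathcal H(G(\mathbbm k))*\delta_{P_r^+}$. Naturality of $z$ under the commuting $P/P_r^+$-action on $\mathcal H(G(\mathbbm k))*\delta_{P_r^+}=C^\infty_c(G(\mathbbm k)/P_r^+)$ forces $z_P$ to be $P$-conjugation-invariant, hence $z_P\in\M^r_P$; naturality under the $G(\mathbbm k)$-maps $C^\infty_c(G(\mathbbm k)/Q_r^+)\to C^\infty_c(G(\mathbbm k)/P_r^+)$, $f\mapsto f*\delta_{P_r^+}$ (for $P\subset Q$), gives $z_P=z_Q*\delta_{P_r^+}=\phi^r_{P,Q}(z_Q)$, so $(z_P)\in A^r(G)$. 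This yields a map $\ZZ^r(G)\to A^r(G)$, and it is inverse to $[A^r]$: on one side, unwinding $C_\bullet$ for $V=\mathcal H(G(\mathbbm k))*\delta_{P_r^+}$ shows $[A^r]((z_P))$ sends the canonical generator $\delta_{P_r^+}$ back to $z_P$; on the other side, two elements of $\ZZ^r(G)$ that agree on all generators $\mathcal H(G(\mathbbm k))*\delta_{P_r^+}$ must coincide, since each $V\in R(G)_{\leq r}$ admits a two-step presentation by direct sums of such objects. Hence $[A^r]$ is an algebra isomorphism.

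The main obstacle is the building-theoretic input: proving --- or citing in exactly the right normalization --- that the level-$r$ Schneider--Stuhler complex resolves every $V\in R(G)_{\leq r}$, which forces one to match the paper's congruence subgroups $P_r^+$ with the Moy--Prasad subgroups appearing in that complex and to set up the dictionary between standard parahorics and facets, and then to run the chain-map verification with the correct orientation conventions. What makes the whole argument go through is that the differentials of this resolution relate only a facet to its faces --- exactly the relations encoded in the maps $\phi^r_{P,Q}$ --- so that compatibility with the $\phi^r_{P,Q}$, the only data surviving in the inverse limit $A^r(G)$, is already enough; by contrast, a naive presentation of $V$ by the $\mathcal H(G(\mathbbm k))*\delta_{P_r^+}$ would demand compatibility with arbitrary Hecke-bimodule maps, which a family in $A^r(G)$ need not satisfy a priori. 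The simply-connectedness hypothesis is used precisely to guarantee that facet stabilizers are parahorics, so that $P$-conjugation-invariance --- rather than invariance under a strictly larger group --- is the correct condition defining $\M^r_P$.
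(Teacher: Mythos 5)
Your argument is correct in substance but takes a genuinely different route from the paper. The paper builds $[A^r]$ by hand: for $h=\{h_P\}$ it forms the alternating sums of conjugation-averages $[A^Y_h]=\sum_{P}(-1)^{r(G)-r(P)}Av^{Y_P}(h_P)$ over growing $I$-invariant subsets $Y\subset G(\mathbbm k)/I$, proves the key stabilization property (Theorem \ref{thm1r}) via Coxeter-combinatorial lemmas on the affine Weyl group (Lemmas \ref{lemdelr}, \ref{lemhr}, Proposition \ref{stabr}), and sets $[A_h](f)=\lim_Y [A^Y_h]*f$; the inverse is the evaluation $z\mapsto\{z_{\HH}(\delta_{P_r^+})\}$, with injectivity coming from the existence of nonzero $P_r^+$-fixed vectors in depth-$\leq r$ irreducibles. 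You replace the entire averaging/stabilization machinery by the exactness of the level-$r$ Schneider--Stuhler complex (Meyer--Solleveld; the consistency of the idempotent system attached to the groups $P_{F,r}^+$ is exactly the input verified in \cite{BKV2}), let a compatible family act termwise as a chain map, and descend to $H_0=V$; your extraction map and the mutual-inverse check coincide in substance with the paper's $\Psi^r$ and Proposition \ref{evalr}. What each approach buys: yours outsources all the building combinatorics to one citable exactness theorem and makes transparent why only the face relations $\phi^r_{P,Q}$ --- the data surviving in $A^r(G)$ --- are needed; the paper's construction is self-contained and, more importantly, \emph{explicit}, producing the central element as a concrete limit of compactly supported functions, which is what feeds the later formulas (Proposition \ref{evalr}, the identification of $[A_{\delta_r}]$ with the projector of \cite{BKV2}, the formulas in Section \ref{stable to center}) and the invariant distribution relevant to Conjecture \ref{stability}. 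If you write your version up, two points need to be made precise rather than asserted: the exact matching of the paper's congruence subgroups $P_r^+$ with the level-$r$ groups appearing in the resolution you cite (this is the nontrivial external input, and you are right to flag it), and the observation that your endomorphism of $\mathrm{Id}_{R(G)_{\leq r}}$, extended by zero on $R(G)_{>r}$, lies in the summand $\ZZ^r(G)$ of $\ZZ(G)$.
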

Theorem \ref{main} is restated in Theorem \ref{main thm depth r}.
We refer to Section \ref{description depthr} for a more
detailed explanation of the statement. There is an obvious evaluation map $\Psi^r:\ZZ^r(G)\to A^r(G)$
and the main step in the proof is to show that the evaluation map is an isomorphism by constructing an explicit inverse map $[A^r]=(\Psi^r)^{-1}$. 
The construction of $[A^r]$ is a generalization of the work of Bezrukavnikov-Kazhdan-Varshavsky in \cite{BKV,BKV2}
where they gave an explicit construction of the depth-$r$ Bernstein projector. 
A key technical result is a stabilization property of certain averaging maps, see Theorem \ref{thm1r}.

To state the second result,
let $P\in Par$ be a hyperspecial parahoric subgroup and consider the $r$-th Moy-Prasad filtration quotient $P_r/P_{r}^+$ of $P$, where $P_r$ is  the $r$-th congruence subgroup of $P$.
Note that $P_0/P_{0}^+$ is isomorphic to
a connected finite reductive group, and 
$P_r/P_{r}^+$, $r>0$, is (non-canonically) isomorphic to its adjoint representation.
Inspired by the work of 
Laumon-Letellier \cite{laumon}
and the first author \cite{chen,chen2}, we  introduce and study the 
algebra $C^{st}(P_r/P_r^+)$ of stable functions on $P_r/P_r^+$ in Section \ref{stable functions}.

\begin{theorem}\label{main2}
There is an algebra homomorphism $\xi^r:C^{st}(P_r/P_r^+)\to \ZZ^r(G)$.
\end{theorem}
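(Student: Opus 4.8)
The plan is to combine the identification $\ZZ^r(G)\cong A^r(G)=\varprojlim_{Q\in Par}\M^r_{Q}$ of Theorem~\ref{main} with the averaging machinery behind the construction of $[A^r]$. Thus it suffices to produce, for each $Q\in Par$, an algebra homomorphism $\xi^r_Q\colon C^{st}(P_r/P_r^+)\to\M^r_{Q}$ compatible with the transition maps, i.e.\ $\phi^r_{Q',Q}\circ\xi^r_Q=\xi^r_{Q'}$ for $Q'\subset Q$, and then to set $\xi^r:=\varprojlim_Q\xi^r_Q$. The common input to all the $\xi^r_Q$ is the following function $e_f$. A stable function $f$ on $P_r/P_r^+$ is in particular invariant under the conjugation action of $P$: for $r=0$ this is the conjugation action of the finite reductive group $P_0/P_0^+$ on itself, while for $r>0$ the action of $P$ on $P_r/P_r^+$ factors through the adjoint action of $P/P^+$, under which every stable function is invariant. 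Hence the extension of $f$ by zero from $P_r$ to $G(\mathbbm k)$ is a compactly supported, $P_r^+$-bi-invariant, $P$-conjugation-invariant function $e_f\in\M^r_{P}$, and, because the convolution of two functions supported on the subgroup $P_r$ is again supported on $P_r$ and there agrees with the product in the group algebra $\C[P_r/P_r^+]$, the assignment $f\mapsto e_f$ is itself an algebra homomorphism (using that $C^{st}(P_r/P_r^+)$ is closed under convolution, one of the structural facts established in Section~\ref{stable functions}).

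To define $\xi^r_Q$ I would feed $e_f$ into the averaging procedure used to construct $[A^r]$, equivalently into the realization of $\ZZ^r(G)$ by $G(\mathbbm k)$-invariant essentially compact distributions following \cite{BKV,BKV2}: roughly, $\xi^r(f):=\lim_n\tfrac{1}{\mu(K_n)}\int_{K_n}{}^{x}e_f\,d\mu(x)$, where $K_1\subset K_2\subset\cdots$ is an exhausting chain of compact open subgroups of $G(\mathbbm k)$ and ${}^{x}e_f(g)=e_f(x^{-1}gx)$, and then $\xi^r_Q(f):=\xi^r(f)*\delta_{Q_r^+}$. Theorem~\ref{thm1r} is exactly the statement that such averaging sequences stabilize, so that $\xi^r(f)$ is a well-defined $G(\mathbbm k)$-invariant essentially compact distribution which, being built from the $P_r^+$-bi-invariant function $e_f$ (so that $e_f=\delta_{P_r^+}*e_f*\delta_{P_r^+}$), annihilates $R(G)_{>r}$ and hence defines an element of $\ZZ^r(G)$; the compatibility $\phi^r_{Q',Q}\circ\xi^r_Q=\xi^r_{Q'}$ is then automatic since all components are cut out of the single element $\xi^r(f)$ by the idempotents $\delta_{Q_r^+}$. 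It remains to check multiplicativity: since $f\mapsto e_f$ is already multiplicative we have $e_{f_1*f_2}=e_{f_1}*e_{f_2}$, and one must see that the averaging is multiplicative on such inputs, $\mathrm{Av}(e_{f_1}*e_{f_2})=\mathrm{Av}(e_{f_1})*\mathrm{Av}(e_{f_2})$; by the stabilization this reduces to a finite convolution identity, parallel to the multiplicativity verification in the proof of Theorem~\ref{main}. Assembling these, $\xi^r\colon C^{st}(P_r/P_r^+)\to\ZZ^r(G)$ is an algebra homomorphism, as desired.

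The main obstacle is the application of Theorem~\ref{thm1r} to the inputs $e_f$: one must show the averaging sequences stabilize for \emph{every} stable $f$, not only for the special functions (Fourier transforms of characteristic functions of Moy--Prasad lattices) used in \cite{BKV,BKV2} to build the depth-$r$ Bernstein projector. This is precisely where the stability of $f$ --- rather than mere $P$-conjugation invariance --- enters: it is stability, together with the structural properties of $C^{st}(P_r/P_r^+)$ developed in Section~\ref{stable functions} in the spirit of Laumon--Letellier~\cite{laumon} and \cite{chen,chen2}, that controls how the translates ${}^{x}e_f$, supported on the congruence subgroups $xP_rx^{-1}$ of the conjugate parahorics, overlap and combine, and hence makes the stabilization hold. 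A secondary but genuine point is the multiplicativity of the averaging on the $e_f$, since conjugation-averaging and convolution do not commute in general; here too one uses the stabilization to localize the computation to a single level.
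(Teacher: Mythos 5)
There is a genuine gap, and it sits exactly where you locate the ``main obstacle,'' but the obstacle is not the one you describe. Theorem~\ref{thm1r} does not stabilize naive averages $\tfrac{1}{\mu(K_n)}\int_{K_n}{}^{x}e_f\,d\mu(x)$ of a single function over an exhausting chain of compacts; such limits have no reason to exist (the normalization $1/\mu(K_n)$ and the spreading of the conjugates fight each other, and no cancellation mechanism is available). What Theorem~\ref{thm1r} stabilizes is the alternating sum $[A^Y_h]=\sum_{Q\in Par}(-1)^{r(G)-r(Q)}Av^{Y_Q}(h_Q)$ attached to a \emph{full compatible family} $h=\{h_Q\}_{Q\in Par}\in A^r(G)$, one component for every standard parahoric. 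You have only produced the single component $e_f\in\M^r_P$ at the fixed hyperspecial $P$. Since a general standard parahoric $Q$ is not contained in $P$ (the standard parahorics correspond to proper subsets $J\subset\tilde\Delta$, and most are incomparable with $P=G(\mathcal O)$), the transition maps of the inverse system give you no way to manufacture $h_Q$ from $e_f$, and the averaging machinery cannot be started. Constructing these missing components is the actual content of Theorem~\ref{main2}, and your proposal never does it.

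The paper's route is: identify $C^{st}(P_r/P_r^+)$ with $\mathbb C[(\hat\rT//\rW)^F]$ (for $r=0$) or $\mathbb C[(\ft_r//\rW)^F]$ (for $r>0$, via Fourier transform and the Chevalley map); for \emph{every} $Q\in Par$ transfer along the map of parameter spaces $(\hat\rT//\rW_Q)^F\to(\hat\rT//\rW)^F$ (resp.\ $t_{Q_r}$) to get $j_Q(f)\in C^{st}(\rG_Q^F)$ (resp.\ $C^{st}(\fg_{Q_r}^F)$); extend by zero to $\M^r_Q$; and verify compatibility $\phi^r_{Q',Q}\circ\iota_Q\circ j_Q=\iota_{Q'}\circ j_{Q'}$ using the vanishing theorems (Theorem~\ref{vanishing group}, Theorem~\ref{main thm stable Lie}(3)), which say that for stable $f$ the sums $\sum_{u\in\rU^F}f(xu)$ vanish off the parabolic --- this is where stability (as opposed to mere conjugation invariance) is genuinely used, via Deligne--Lusztig theory and Fourier transform, not to control any conjugation-averaging. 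Only after the compatible family is in hand does one invoke $A^r(G)\cong\ZZ^r(G)$. Your proposal contains neither the parameter-space transfer $j_Q$ nor the vanishing property, so it cannot be completed as written.
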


Theorem \ref{main2} is a combination of Theorem \ref{stable group to center} 
and Theorem \ref{stable lie to center}  in the paper.
We refer to Section \ref{stable to center} for a more
detailed explanation of the statement. 
The proof of Theorem \ref{main2} uses the description of the depth-$r$ Bernstein center in Theorem \ref{main} and also 
a vanishing property of stable functions, see  
Theorem \ref{vanishing group} and Theorem \ref{main thm stable Lie}.

As an application of Theorem \ref{main2}
we attach to each depth-$r$ irreducible representation 
an invariant,  called the 
depth-$r$ Deligne-Lusztig parameter,
 with remarkable properties:
 \begin{theorem}\label{main3}
    One can attach to each depth-$r$ irreducible representation $(\pi,V)$ of $G(\mathbbm k)$
    an element  
$\theta(\pi)$ in
$(\hat\rT//\rW)^F$
if $r=0$
or in $(\ft_r//\rW)^F$ if $r>0$, to be called
the depth-$r$ Deligne-Lusztig parameter of $\pi$,
characterized by the property that 
the composed map 
$C^{st}(P_r/P_r^+)\stackrel{\xi_r}\to \ZZ^r(G)\to \mathrm{End}
(\pi)\cong\mathbb C$ is given by evaluation at 
$\theta(\pi)$.
Moreover, $\theta(\pi)$ is equal to the semi-simple part of minimal 
$K$-types of $\pi$.
 \end{theorem}
Theorem \ref{main3} is a combination of results in
Section \ref{DL} and 
Theorem \ref{DL=K-types}. 
We refer to Section \ref{DL} and \ref{K-types} for a more
detailed explanation of the statement.  
In \emph{loc. cit.}, we also show that 
 the depth-$r$ Deligne-Lusztig parameters $\theta(\pi)$ agree
with the depth-$r$ restricted Langlands parameter
of $\pi$ introduced in \cite{chendebackertsai}, see Remark \ref{remark CDT}.

The main results of the paper 
allow the possibility 
of studying Bernstein centers of $p$-adic groups using the 
theory of \'etale cohomology and perverse sheaves. Theorem \ref{main2} provides such an example  where 
we apply Deligne-Lusztig theory \cite{DL76} and Fourier transforms on Lie algebras \cite{Le,Lu1}
to construct elements in the depth-$r$ Bernstein center that ``see" the 
minimal $K$-types. 
In joint work with Charlotte Chan, we plan to use the depth-$r$ character sheaves  studied in \cite{BC,Lu2,Lu3}
to construct and study more elements in the Bernstein center.

\subsection{Rational depths}
In our later work \cite{bcrationaldepths}, we generalize the main results of the paper to general rational depths $r\in\mathbb Q_{\geq0}$. 
In particular, we obtain  a decomposition of the
category of smooth representations into a product of full subcategories indexed by restricted
depth-$r$ Langlands parameters.
A new ingredient in the rational depths case is the study of 
invariant functions on  finite graded Lie algebras (in the integral depths  case we only need  adjoint invariant functions on finite reductive groups or  Lie algebras). 

\subsection{A conjecture on stability}
We conclude the introduction with the following conjecture on stability. 
Recall that 
an element $z\in\ZZ(G)$ in the Bernstein center  is called stable if the associated 
invariant distribution $\nu_z$
on $G(\mathbbm k)$ is stable.
Let $\mathcal Z^{st}(G)\subset\mathcal Z(G)$ be the subspace of stable elements
and $\mathcal Z^{st,r}(G)=\mathcal Z^{r}(G)\cap \mathcal Z^{st}(G)$.
\begin{conjecture}\label{stability}
We have $\mathrm{Im}(\xi^r)\subset\mathcal Z^{st,r}(G)$.
\end{conjecture}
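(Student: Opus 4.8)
\textbf{A possible approach to Conjecture~\ref{stability}.}
The plan is to reduce the stability of the invariant distribution $\nu_z$ attached to $z\in\mathrm{Im}(\xi^r)$ to a finite-field statement about stable functions on Moy--Prasad quotients, exploiting the compatible-system description $z\leftrightarrow(z_Q)_{Q\in Par}$, $z_Q\in\M^r_Q$, furnished by the isomorphism $[A^r]$ of Theorem~\ref{main}. Recall that $\nu_z$ is characterised by $\nu_z*\phi=z\cdot\phi$ for all $\phi\in C_c^\infty(G(\mathbbm k))$, and that $\nu_z$ is stable precisely when $\nu_z(\phi)=0$ for every $\phi$ all of whose stable orbital integrals vanish. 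Since $\nu_z$ is conjugation invariant, this vanishing is local along the conjugation action: by Harish--Chandra descent one may test it on functions supported near $s\cdot U$ for $s$ topologically semisimple and $U$ a small topologically unipotent neighbourhood, and thereby reduce to the behaviour of $\nu_z$ near the identity of each centraliser $G_s(\mathbbm k)$ --- which, since $G$ is simply connected, is again a connected reductive group carrying its own Moy--Prasad theory. Because $z$ has depth $\le r$, only the depth-$r$ structure near $1$ intervenes in that analysis.

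The second, and I expect hardest, step is to produce an explicit local model for $\nu_z$ near $1$. Running the Bezrukavnikov--Kazhdan--Varshavsky--style construction of $[A^r]$ (Theorem~\ref{main}) together with the stabilisation of averaging maps (Theorem~\ref{thm1r}) should yield a neighbourhood $\Omega$ of $1$, bi-invariant under $P_r^+$ for a hyperspecial $P$ and small enough that all relevant averages have stabilised, on which $\nu_z|_{C_c^\infty(\Omega)}$ is computed --- through the Moy--Prasad isomorphism $\mathfrak g_r/\mathfrak g_r^+\cong P_r/P_r^+$ and a Cayley (mock-exponential) map $\mathfrak g_r\xrightarrow{\ \sim\ }P_r^+$ --- by (a Fourier transform of) the inflation to $\mathfrak g_r$ of the stable function $f\in C^{st}(P_r/P_r^+)$ defining $z=\xi^r(f)$, mirroring the fact that $\xi^r$ itself is built from Deligne--Lusztig theory and Fourier transforms on the finite Lie algebra. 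Two points require genuine care here: pinning down precisely which distribution on $\mathfrak g_r$ represents $\nu_z|_\Omega$, and doing so uniformly over the parahorics $Q$ so that the local descriptions glue --- this last being delicate because the transition maps $\phi^r_{P,Q}$ are convolution by $\delta_{P_r^+}$, a distribution which need not itself be stable.

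The third step is to feed in finite-field stability and homogeneity. By the definition of $C^{st}(P_r/P_r^+)$ in the Laumon--Letellier--Chen formalism (\cite{laumon,chen,chen2}) --- stable functions on a finite reductive group when $r=0$, on its Lie algebra when $r>0$ --- and by the compatibility of finite-field stability with $p$-adic stability via homogeneity of invariant distributions (in the spirit of the homogeneity results of Waldspurger and DeBacker and of the depth-zero stable-packet results of DeBacker--Reeder and Kazhdan--Varshavsky), the inflation to $\mathfrak g_r$ of a stable $f$, hence also its Fourier transform, represents a stable distribution on $\mathfrak g(\mathbbm k)$; the vanishing property of stable functions (Theorems~\ref{vanishing group} and~\ref{main thm stable Lie}) is what confines that inflation to the depth range in which the homogeneity input applies. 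Transporting back through the Cayley map gives stability of $\nu_z$ near $1$, and repeating the argument inside each $G_s(\mathbbm k)$ --- for which one must also check that $\xi^r$ commutes with Harish--Chandra descent, i.e. that the restriction of the system $(z_Q)$ to the parahorics of $G_s$ is again produced by $\xi^r_{G_s}$ from a stable function (parabolic/Jacquet restriction preserving stability on the finite side) --- yields stability of $\nu_z$ everywhere, so that $\mathrm{Im}(\xi^r)\subset\mathcal Z^{st,r}(G)$. The load-bearing unknowns are thus the explicit local form of $\nu_z$ in Step~2 and the commutation of $\xi^r$ with descent in Step~3; as an alternative route to the first, one could instead identify the spectral multiplier $z(\pi)$ directly, observe via Proposition~\ref{DL=K-types} that it factors through the depth-$r$ Deligne--Lusztig parameter $\theta(\pi)$, which depends only on the stable conjugacy class of the minimal $K$-type datum, and then use the same homogeneity argument to upgrade this spectral statement to stability of the distribution.
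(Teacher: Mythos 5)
The first thing to note is that the statement you are addressing is Conjecture~\ref{stability}: the paper does not prove it. The only evidence the authors offer is the special case $f=\mathrm 1_e$, where $\xi^r(\mathrm 1_e)$ is the depth-$r$ Bernstein projector whose stability is \cite[Theorem 1.23]{BKV2}, and they state explicitly that even the $r=0$ case is work in progress via the geometric approach of \cite{chen2,BKV}. So there is no proof in the paper to compare yours against; what you have written is a strategy outline, and you are candid that it is one. Judged as such, it points in a sensible direction: descent to the finite quotients $P_r/P_r^+$, an explicit local model for $\nu_z$ coming from the stabilization results, and a homogeneity input in the style of Waldspurger and DeBacker are the architecture one would expect.

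That said, the load-bearing steps you flag are not merely technical, and one of them hides the whole problem. The most serious issue is in your Step 3: you pass from ``$f$ is stable in the sense of $C^{st}(P_r/P_r^+)$'' (constancy of $\gamma_f$ on Deligne--Lusztig packets, resp.\ constancy of $\mathrm{FT}(f)$ on fibers of the Chevalley map) to ``the inflation of $f$ represents a stable distribution on $\mathfrak g(\mathbbm k)$'' (vanishing against all functions whose stable orbital integrals vanish). These are two different notions of stability on different objects, and bridging them is essentially the entire content of the conjecture; for the single function $\mathrm 1_e$ this bridge is the main theorem of \cite{BKV2} and required a substantial geometric argument. Invoking ``compatibility of finite-field stability with $p$-adic stability via homogeneity'' papers over exactly the point at issue. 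Second, the homogeneity theorems you cite concern distributions supported near the topologically nilpotent set within a depth range, whereas for $f$ supported away from $\bar 0$ the distribution $\nu_z$ is concentrated near nontrivial semisimple elements; your Harish--Chandra descent then requires the centralizers $G_s$, which need not be split nor satisfy the paper's running hypotheses, and the claim that the restricted system $(z_Q)$ is again of the form $\xi^r_{G_s}(f')$ for a stable $f'$ is an unproved compatibility. Finally, your alternative route via Proposition~\ref{DL=K-types} is circular as stated: the implication ``the spectral multiplier factors through the depth-$r$ Deligne--Lusztig parameter, hence $\nu_z$ is stable'' is precisely the conjectural spectral characterization of stability recorded in the remark following Conjecture~\ref{stability}, which itself rests on the conjectural theory of $L$-packets. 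None of this makes your outline wrong as a plan, but as a proof it has no complete step, and it should not be presented as establishing the conjecture.
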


Let
$\mathrm 1_e\in C^{st}(P_r/P_{r}^+)$ be the multiplicative unit.
The image $z^r=\xi^r(\mathrm 1_e)\subset \ZZ^r(G)$
is the so-called depth-$r$ Bernstein projector
and its stability was proved in
\cite[Theorem 1.23]{BKV2}.
 This example provides an evidence of 
 Conjecture \ref{stability}.
 It is work in progress to prove
 Conjecture \ref{stability} in the $r=0$ case using  the geometric approach to the depth-zero stable center conjecture in \cite{chen2,BKV}.

 \begin{remark}
     Conjecture \ref{stability} is suggested by the conjectural theory of $L$-packets 
and its relation to endoscopy of invariant distributions. Namely, it is expected that 
an element $z\in\ZZ(G)$ is stable if and only if the corresponding function 
on the set  of equivalence classes of irreducible representations of $G(\mathbbm k)$ is constant on $L$-packets and 
the set of irreducible representations of the same depth-$r$ Deligne-Lusztig parameter is a union of $L$-packets.
 \end{remark}

 \subsection{Organization}
We briefly summarize the main goals 
of each section. In Section \ref{depth zero center}
we give a description of the depth-zero Bernstein center.
In Section \ref{description depthr}, 
we give a description of the depth-$r$ Bernstein center for $r \in \Z_{>0}$. 
In section \ref{stable functions}, we introduce and study stable functions 
on finite reductive groups and Lie algebras. 
In Section \ref{stable to center}, we construct 
maps from the algebras of stable functions on the 
 $r$-th Moy-Prasad filtration quotient  of hyperspecial parahorics
to the depth-$r$ Bernstein center.
We introduce the notion of 
depth-$r$ Deligne-Lusztig parameters and study their relationship to minimal $K$-types.

\subsection{Acknowledgement}
 The authors thank 
 Roman Bezrukavnikov,
 Charlotte Chan, Bao Ch\^au Ng\^o,
 Cheng-Chiang Tsai, Zhiwei Yun
 for many useful discussions. T.-H. Chen also thanks the 
 NCTS-National Center for Theoretical Sciences at Taipei
 where parts of this work were
done. The research of T.-H. Chen is supported by NSF grant DMS-2143722.

\section{A description of the depth-zero Bernstein center}\label{depth zero center}
Let $Irr(G)$ denote the set of equivalence classes of irreducible representations of $G(\mathbbm k)$. 
For  $r\in\mathbb Q_{\geq0}$, let $Irr(G)_{\leq r}$ (resp. $Irr(G)_{> r}$) denote the set of $(\pi, V) \in Irr(G)$ of depth $\leq r$ (resp. $>r$) and 
let $R(G)_{\leq r}$ (resp, $R(G)_{> r}$) denote the full subcategory of representations whose irrreducible subquotients have depth $\leq r $ (resp, depth $>r$).

Let $\mathcal H(G):=(C^{\infty}_c(G),*)$ be the Hecke algebra  of smooth compactly supported functions on $G(\mathbbm k)$ with multiplication given by  the convolution product 
 \[h* h'(x)=\int_{} h(xy^{-1})h'(y)d\mu(y)\]
 with respect to the fixed Haar measure $\mu$. Let $\mc X(G) = \X$ denote the (reduced) Bruhat-Tits building of the group $G(\mathbbm k)$, and $G_x$ denote the parahoric subgroup corresponding to $x\in \X$. Further, let $(G_x)_r$ and $(G_x)_r^+$ denote the Moy-Prasad filtration subgroups as defined in \cite{MP94}, although our notation is slightly different (\cite{MP94} uses $P_{x,r}$ and $P_{x,r+}$ for these respectively). \par
 We give a brief description of some equivalent notions of the Bernstein center, mainly following \cite{Ber,BKV}. Given $(\pi, V) \in R(G)$, each $z\in \ZZ(G)$ defines an endomorphism $z|_V \in \text{End}_{G(\mathbbm k)}(V)$. In particular, if $(\pi, V) \in Irr(G)$, each $z\in \ZZ(G)$ defines a function $f_z :Irr(G)  \rightarrow \C$ such that $z|_V=f_z(\pi)Id_V$. Moreover, the map $z\mapsto f_z$ is an algebra homomorphism $\ZZ(G) \rightarrow Fun(\: Irr(G), \: \C)$, which is injective. 
 
Each $z\in \ZZ(G)$ defines an endomorphism $z_{reg}$ of the $G(\mathbbm k)$ representation on $\HH(G)$ given by the conjugation action ($(g f) (x) = f(g^{-1}xg))$, and hence gives rise to an $G(\mathbbm k)$-invariant distribution $\nu_z$ such that $\nu_z(f)=z_{reg}(i^*(f))(1) $ for all $f\in \HH(G)$, where $i:G(\mathbbm k)\rightarrow G(\mathbbm k)$ is given by $g \mapsto g^{-1}$. The invariant distribution $\nu_z$ can be characterised by the condition $\nu_z * h = z_\HH(h) \: \forall \: h\in \HH(G)$. Moreover, the map $z \mapsto \nu_z$ gives an isomorphism of $\ZZ(G)$ onto the algebra of essentially compact $G(\mathbbm k)$-invariant distributions $\mc D(G)^G_{ec}$.

 \par
  Each smooth $G(\mathbbm k)$ representation is equivalently a non-degenerate $\HH(G)$-module. Let $(l, \HH(G))$ and $(r,\HH(G))$ denote the $G(\mathbbm k)$ representations induced by left and right translations by $G(\mathbbm k)$ on $\HH(G)$. The action on $G(\mathbbm k)$ by $G(\mathbbm k)^2$, defined by $(g,h)(x)=gxh^{-1}$ gives a $G(\mathbbm k)^2$ action on $\HH(G)$, given by $(g,h)f(x)=l(g)r(h)f(x)= f(g^{-1}xh)$, and hence $\HH(G)^2$-module structure on $\HH(G)$. Note that the actions $l$ and $r$ commute, and the action of $\HH(G)^2$ on $\HH(G)$ is given by $(\alpha, \beta)f= \alpha * f* \hat{\beta}$, where $\hat{\beta}(x)=\beta(x^{-1})$. Each $z\in \ZZ(G)$ defines an endomorphism $z_\HH$ of the smooth representation $(l, \HH(G))$, and since the actions $l$ and $r$ commute, the endomorphism $z_\HH$ of the  Hecke algebra $\HH(G)$ commutes with left and right $G(\mathbbm k)$ actions and hence left and right convolutions. For every $(\pi, V) \in R(G)$, $v\in V$ and $h\in \HH(G)$, we have the equality $z_V(h(v))=(z_\HH(h))(v)$. Moreover, the map $z\mapsto z_\HH$ defines an algebra isomorphism $\ZZ(G) \xrightarrow{\sim} \text{End}_{\HH(G)^{2}}(\HH(G))$. In this paper, we have used this description of the Bernstein center to produce algebra isomorphisms onto the depth-$r$ parts for each non-negative integer $r$, and finally a limit description of the entire center. \par
 
\subsection{Stabilization in the depth-zero case}
 Let $Par$ be the set of standard parahorics for $G(\mathbbm k)$ containing a fixed Iwahori subgroup $I$ and $P\in \:Par$. We define 
$$\mathcal{M}^0_P := C^{\infty}_c(\frac{G(\mathbbm k)/P^+}{P}) $$
to be the subalgebra of 
$\HH(G)$ consisting of compactly supported smooth functions on $G(\mathbbm k)$ which are $P^+$ bi-invariant and $P$ conjugation invariant. 
For $P,Q \in Par$ and $P\subseteq Q$, we have a map 
\begin{align*}
\phi^0_{P,Q}:\M^0_Q &\longrightarrow \M^0_P\\
h &\longmapsto h*\delta_{P^+}  
\end{align*}
where $\delta_{K} = \frac{1}{\mu(K)}\mathbbm{1}_K$ for any $K \subseteq G(\mathbbm k)$, $\mathbbm{1} _K$ being the characteristic function of $K$ and $\mu(K)=\int \mathbbm{1} _Kd\mu$.
With the above defined maps, we have an inverse system $\{\M^0_P\}_{P \in Par}$ and we define $A^0(G)$ to be the inverse limit of the algebras $\M^0_P$.
$$ A^0(G) := \lim_{P\in Par} \M^0_P.$$

Let $\Omega$ be the set of all finite $I$-invariant subsets $Y\subset G(\mathbbm k)/I$ such that for all $w'\leq w$, the image of $Iw' \subset Y$ if the image of $Iw \subset Y$. For every $Y \in \Omega$ and $P \in Par $, we denote the image of $Y$ in $G(\mathbbm k) /P$ by $Y_P$. For any $h_P\in \M^0_P$, we define $Av^{Y_P}(h_P) \in \mc H(G)$ to be the function 
\begin{equation}
    Av^{Y_P}(h_P)= \sum_{y\in Y_P}Ad_y(h_P).
\end{equation}
Note that this is well defined since $h_P$ is $P$-conjugation invariant. Let $\tilde{\Delta}$ denote the set of affine simple roots. There exists a bijection between proper subsets $J\subset \tilde{\Delta}$ and $Par$, and we denote the standard parahoric subgroup corresponding to $J$ by $P_J$. Let $r(G)= |\tilde{\Delta}|-1$ denote the rank of $G$ and $r(P_J)=|J|$ denote the semisimple rank of the reductive quotient. For each $Y\in \Omega $ and $h=\{h_P\}_{P\in Par}\in A^0(G)$, define $[A^Y_h] \in \mc H(G) $ as 
\begin{equation}\label{AYh}
    [A^Y_h] = \sum_{P \in Par} (-1)^{r(G)-r(P)}Av^{Y_P}(h_P).
\end{equation}

We have the following key stabilization property.
   \begin{theorem}\label{thmstab0}
       For every $f\in \mc H(G)$ and $h\in A^0(G)$, the sequence $\{[A^Y_h] * f\}_{Y \in \Omega} $ stabilizes, and hence $\lim_{Y\in \Omega} \:[A^Y_h] * f $ is well-defined.
\end{theorem}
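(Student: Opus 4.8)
The plan is to reduce the statement to a purely combinatorial cancellation among the face--contributions of the building, using two inputs: the compatibility relations built into $A^0(G)$, and the fact that $f$ is bi-invariant under a deep congruence subgroup and has compact support.

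\emph{Reductions.} By linearity in $f$ we may assume $f$ is bi-invariant under a fixed compact open subgroup $K=(G_{x_0})_m$ for some $x_0\in\X$ and some $m\gg 0$, with $\mathrm{supp}(f)$ compact. The index set $\Omega$ is directed: if $Y_1,Y_2\in\Omega$ then the Bruhat-order down-closure of $Y_1\cup Y_2$ is again a finite, $I$-invariant, down-closed subset of $G(\mathbbm k)/I$, hence lies in $\Omega$ and contains both. Therefore ``$\{[A^Y_h]*f\}_{Y\in\Omega}$ stabilizes'' is equivalent to: there exists $Y_0\in\Omega$ such that $\big([A^Y_h]-[A^{Y_0}_h]\big)*f=0$ for every $Y\in\Omega$ with $Y_0\subseteq Y$, which is what I will prove.

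\emph{Reorganization over faces and telescoping.} For $P\in Par$, the set $Y_P$ is exactly the set of faces of cotype matching $P$ of the chambers of $Y$, so $[A^Y_h]=\sum_{F}(-1)^{r(G)-r(P_F)}\,Ad_F(h_{P_F})$, the sum being over all faces $F$ of $\X$ occurring as a face of some chamber of $Y$ (here $Ad_F(h_{P_F}):=Ad_g(h_P)$ for $F\leftrightarrow gP$, well defined by $P$-conjugation invariance). Hence $[A^Y_h]-[A^{Y_0}_h]=\sum_{F\in\partial(Y,Y_0)}(-1)^{r(G)-r(P_F)}Ad_F(h_{P_F})$, where $\partial(Y,Y_0)$ denotes the faces of $Y$-chambers that are not faces of any $Y_0$-chamber. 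The compatibility relations $h_Q*\delta_{P^+}=h_P$ (for $P\subseteq Q$) defining $A^0(G)$ translate, via $F\leftrightarrow gP$, into $Ad_{F'}(h_{P_{F'}})=Ad_F(h_{P_F})*\delta_{P_{F'}^+}$ whenever $F$ is a face of $F'$. Conceptually this packages $h=(h_P)$ as a cellular cosheaf $\mc L_h$ on $\X$ valued in $\HH(G)$, with corestriction maps given by convolution with the idempotents $\delta_{P^+}$, so that $[A^Y_h]$ is its compactly supported Euler characteristic $\sum_F(-1)^{\dim F}\mc L_h(F)$ over the realization $|Y|$, and the difference above is the relative Euler characteristic over the pair $(|Y|,|Y_0|)$.

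\emph{The crux: local constancy of $f$ at infinity and a matching on $\partial(Y,Y_0)$.} Now convolve on the right with $f$. Using elementary identities such as $\delta_A*\delta_B=\delta_B$ for subgroups $A\subseteq B$, together with a Moy--Prasad analysis of how $K=(G_{x_0})_m$ sits inside the pro-unipotent radicals $P_F^+$ as the face $F$ moves away from $x_0$, one shows that $\mc L_h*f$ becomes ``locally constant along the directions at infinity'': there is a bounded region $B'\subseteq\X$ (depending only on $\mathrm{supp}(f)$, $m$, and the finitely many standard data $h_P$) such that for every face $F$ outside $B'$ the corestriction map $Ad_F(h_{P_F})*f\mapsto Ad_{F'}(h_{P_{F'}})*f=Ad_F(h_{P_F})*\delta_{P_{F'}^+}*f$ is an equality, i.e.\ convolving with $\delta_{P_{F'}^+}$ no longer changes the $f$-smeared function. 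Choose $Y_0\in\Omega$ large enough that $|Y_0|\supseteq B'$ and $|Y_0|$ is combinatorially convex (this is where the defining property of $\Omega$ is used). Then for any $Y\supseteq Y_0$ the complement $|Y|\setminus|Y_0|$ deformation retracts onto its interface with $|Y_0|$, compatibly with the now locally constant $\mc L_h*f$; equivalently, the geodesic retraction of $\X$ toward $B'$ induces an acyclic matching on $\partial(Y,Y_0)$ in which matched pairs $(F,F')$ with $\dim F'=\dim F+1$ satisfy $Ad_{F'}(h_{P_{F'}})*f=Ad_F(h_{P_F})*f$, so that $\big([A^Y_h]-[A^{Y_0}_h]\big)*f$ telescopes to $0$.

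The main obstacle is Step~3: making precise the sense in which $\mc L_h*f$ is locally constant outside a bounded region, and the sense in which $|Y|\setminus|Y_0|$ retracts onto the interface compatibly with it. Concretely this requires (i) a careful comparison of $(G_{x_0})_m$ with the filtrations $(G_F)_{0}^+=P_F^+$ as $F\to\infty$, to see that $\delta_{P_F^+}$ acts as the identity on the $f$-smeared functions once $F\notin B'$, and (ii) the verification that down-closed $I$-invariant subsets $Y\subseteq G(\mathbbm k)/I$ have combinatorially convex geometric realizations on which the geodesic retraction is cellular. Once these two points are established the cancellation is formal. (This is the depth-zero prototype of the stabilization used for general integral $r$.)
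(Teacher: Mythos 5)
Your outline correctly identifies the shape of the argument --- reorganize $[A^Y_h]$ as a signed sum of face contributions and cancel the contributions of faces far from (the level of) $f$ in matched pairs --- and this is indeed how the paper, following Bezrukavnikov--Kazhdan--Varshavsky, proceeds. But the step you flag as ``the main obstacle'' is not a technical verification to be deferred; it is the entire mathematical content of the theorem, and your proposal does not prove it. Concretely, the identity you need for matched pairs, $Ad_{F'}(h_{P_{F'}})*f = Ad_F(h_{P_F})*f$, reduces (after writing $f=\delta_{Q_n^+}*f'$ for a deep congruence subgroup $Q_n^+$ of a parahoric, and after commuting the averaging over $I_w w$ past $\delta_{Q_n^+}$ using that $I\subset Q$ normalizes $Q_n^+$) to the statement that $\delta_{P_{J'}^+}*\delta_{w^{-1}Q_n^+w}=\delta_{P_{J}^+}*\delta_{w^{-1}Q_n^+w}$, which in turn rests on the group-theoretic decomposition $P_J^+=P_{J'}^+\,(P_J^+\cap w^{-1}Q_n^+w)$. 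The paper proves this by an explicit affine-root computation: any $U_\beta\subset P_J^+\setminus P_{J'}^+$ has $\beta=\sum_{\alpha_i\in J}n_i\alpha_i+n'\alpha$ with $n'>0$, so $w(\beta)(x)\geq w(\alpha)(x)>n$ once $U_{w(\alpha)}\subset Q_n^+$ and $J\subset J_w$. Without this computation the cancellation is not ``formal,'' and your item (i) is precisely this lemma, left unproved.

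Moreover, the form in which you state the crux is too strong and would steer the matching wrong. It is not true that the corestriction becomes an equality for \emph{every} codimension-one coface once $F$ leaves a bounded region; the identity holds only in the directions $\alpha\in\tilde{\Delta}$ with $U_{w(\alpha)}\subset Q_n^+$, and for a chamber $IwI/I$ with $w\notin S(Q_n^+)$ one is guaranteed only the existence of \emph{one} such $\alpha$ --- this is exactly the definition of $S(Q_n^+)$, whose finiteness (Lemma \ref{lemfin}) plays the role of your bounded region $B'$. Consequently the matching cannot be the one induced by a geodesic retraction toward $B'$: the retraction direction need not be one in which the cosheaf is constant. The paper instead fixes, for each newly added $I$-orbit $Y_w$, a single admissible $\alpha\in J_w$ and pairs $J\leftrightarrow J\cup\{\alpha\}$ inside the power set of $J_w$ (the faces of that chamber not already present, by maximality of $Y_w$ in $Y$), so that the signs $(-1)^{r(G)-|J|}$ cancel in pairs; the induction is simply on the number of $I$-orbits in $Y\setminus Y(Q_n^+)$. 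Your point (ii) on convexity of $|Y_0|$ and cellularity of the retraction is then unnecessary, but as written your proposal has a genuine gap at both of its load-bearing steps.
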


In order to complete the proof of Theorem \ref{thmstab0} we need some notations and lemmas. Let $\Tilde{W}$ denote the extended affine Weyl group (which is the same as the affine Weyl group in this case). We define 
\begin{equation}\label{defnSY}
    S(P_n^+)=\{w\in \Tilde{W} \:|\: U_{w(\alpha)} \nsubseteq P_n^+  \: \forall\alpha \in \Tilde{\Delta}\} \cup \{1\},  \:\:\: \text{and}\:\: Y(P_n^+)= \bigcup_{w\in S(P_n^+)} Y^{\leq w}
\end{equation}
where $Y^{\leq w} =\cup_{w'\leq w} Iw'I/I \in \Omega$. For $ w\in \Tilde{W}$
\[
J_w=\{ \alpha \in \Tilde{\Delta}\: | \: w(\alpha) >0\}, \:\:\: Y_w=IwI/I
\]
\begin{lemma} \label{lemfin}
Let $P\in\: Par $
    \begin{itemize}
        \item[(a)] For every $n\geq 0$, $S(P^+_n)$ is finite. 
        \item[(b)] $S(P^+)=\{1\}$ 
    \end{itemize}
\end{lemma}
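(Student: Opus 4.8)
The plan is to translate the condition defining $S(P_n^+)$ into a condition on the root subgroup filtration and then use that the Moy-Prasad filtration of each affine root subgroup is exhausted from below and above in a controlled way. Recall that for a hyperspecial (or more generally standard parahoric) $P$ with pro-unipotent radical $P^+$, and for each affine root $\alpha$ (i.e.\ each affine root hyperplane), the associated root subgroup $U_\alpha$ sits in a filtration, and the congruence subgroup $P_n^+$ contains $U_\alpha$ precisely when the ``depth'' of $\alpha$ relative to the facet of $P$ is at least $n$ (up to the usual normalization). So the condition $U_{w(\alpha)}\nsubseteq P_n^+$ for all $\alpha\in\tilde\Delta$ says that $w$ moves every affine simple root to an affine root of bounded depth with respect to $P$.

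\textbf{Part (b).} For $n=0$ (i.e.\ $P^+ = P_0^+$), the condition $U_{w(\alpha)}\nsubseteq P^+$ for all $\alpha\in\tilde\Delta$ forces $w(\alpha)$ to be an affine root whose root subgroup is not contained in the pro-unipotent radical. For a parahoric $P$, the roots $\beta$ with $U_\beta\subseteq P$ but $U_\beta\nsubseteq P^+$ are exactly those in the (finite) root system of the reductive quotient; the roots $\beta$ with $U_\beta\nsubseteq P$ are the ones on the ``positive side.'' I would argue that if $w\neq 1$, then $w$ sends at least one affine simple root $\alpha$ to a negative affine root (this is a standard fact about the affine Weyl group and its action on affine simple roots — any nontrivial element has an affine simple root sent to a negative one, by the theory of reduced words / inversion sets), and for such $\alpha$ the subgroup $U_{w(\alpha)}$ is genuinely contained in $P^+$. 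Hence the only $w$ surviving is $w=1$, giving $S(P^+)=\{1\}$. (The fact that $1\in S(P^+)$ by fiat is already built into the definition via the $\cup\{1\}$.)

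\textbf{Part (a).} For general $n\geq 0$, I would show finiteness by a pigeonhole/boundedness argument on the lengths of the $w$'s. The condition $U_{w(\alpha)}\nsubseteq P_n^+$ for all $\alpha\in\tilde\Delta$ bounds, for each simple affine root $\alpha$, the depth of $w(\alpha)$ with respect to the facet of $P$ by a constant depending only on $n$ and $G$. Since an element $w\in\tilde W$ is determined by the images $w(\alpha)$, $\alpha\in\tilde\Delta$, of the affine simple roots (the affine simple roots span, and $w$ is an affine-linear isometry of the apartment), and since there are only finitely many affine roots of depth $\le c$ with respect to a fixed facet (the affine roots form a locally finite hyperplane arrangement), there are only finitely many possible tuples $(w(\alpha))_{\alpha\in\tilde\Delta}$, hence only finitely many $w$.

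\textbf{Main obstacle.} The routine part is the finiteness count in (a); the step that needs the most care is making precise the dictionary between ``$U_{w(\alpha)}\subseteq P_n^+$'' and an inequality on affine-root depths, since $P$ ranges over all standard parahorics (not just the Iwahori), so the relevant ``origin'' for measuring depth is the facet of $P$ rather than a point, and one must be careful that $U_\beta$ as written means the affine root subgroup attached to the affine root $\beta$, with its own intrinsic filtration, intersected appropriately with $P$. Once this dictionary is set up cleanly (citing Moy-Prasad \cite{MP94} for the filtration and the standard structure theory of the affine Weyl group acting on the apartment), both (a) and (b) follow quickly. I would also double-check the edge case where $w(\alpha)$ is an affine root whose vector part is zero (a constant affine functional) — this does not occur for $w\in\tilde W$ acting on genuine affine simple roots, but it is worth a remark.
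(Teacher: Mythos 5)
The paper itself does not prove this lemma; it cites \cite[Lemma 4.2.2]{BKV}. Your attempt to give a self-contained argument is welcome, but as written it contains two genuine errors, both traceable to the same missing ingredient.

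First, part (b): your key step asserts that if $w\neq 1$ then some $w(\alpha)$ is a negative affine root, and that for such $\alpha$ one has $U_{w(\alpha)}\subseteq P^+$. The second half is backwards. The dictionary used throughout the paper (see Claim \ref{cl1lemdel}) is: writing $P=G_y$, one has $U_\beta\subseteq P^+$ if and only if $\beta(y)>0$; since $y$ lies in the closure of the fundamental alcove, this forces $\beta$ to be a \emph{positive} affine root (and one not vanishing on the facet of $P$). A negative $w(\alpha)$ satisfies $w(\alpha)(y)\leq 0$, hence $U_{w(\alpha)}\nsubseteq P^+$ — exactly the condition defining membership in $S(P^+)$, not its negation. (Your own sentence "the roots $\beta$ with $U_\beta\nsubseteq P$ are the ones on the positive side" already conflicts with the conclusion you draw two lines later.) The correct mechanism is the linear relation $\sum_{\alpha\in\tilde\Delta} n_\alpha\,\alpha\equiv 1$ among the affine simple roots, with all marks $n_\alpha>0$: applying $w$ and evaluating at $y$ gives $\sum_\alpha n_\alpha\, w(\alpha)(y)=1>0$, so some $w(\alpha)(y)>0$, i.e.\ $U_{w(\alpha)}\subseteq P^+$. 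This shows the first set in \eqref{defnS} is empty for $n=0$ (for every $w$, not just $w\neq 1$), whence $S(P^+)=\{1\}$ purely from the adjoined $\{1\}$.

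Second, part (a): the defining condition $U_{w(\alpha)}\nsubseteq P_n^+$ translates only to the \emph{upper} bound $w(\alpha)(y)\leq n$. Your finiteness count then invokes "only finitely many affine roots of depth $\le c$," but this is false for a one-sided bound: for each finite root $a$ the affine roots $a+k$ satisfy $(a+k)(y)\leq n$ for all sufficiently negative $k\in\Z$, so there are infinitely many affine roots $\beta$ with $\beta(y)\leq n$. Local finiteness of the hyperplane arrangement only gives finiteness for a \emph{two-sided} bound $|\beta(y)|\leq c$. The missing lower bound again comes from $\sum_\alpha n_\alpha\, w(\alpha)(y)=1$: it yields $w(\beta)(y)\geq \bigl(1-n\sum_{\alpha\neq\beta}n_\alpha\bigr)/n_\beta$ for each $\beta\in\tilde\Delta$. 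With both bounds, each $w(\alpha)$ ranges over a finite set, and since $w$ is determined by the tuple $(w(\alpha))_{\alpha\in\tilde\Delta}$ (the affine simple roots affinely span), finiteness of $S(P_n^+)$ follows. Your injectivity observation and the reduction to counting tuples are correct; it is only the boundedness input that is incomplete.
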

\begin{proof}
    This is Lemma $4.2.2$ of \cite{BKV}
\end{proof}

\begin{lemma}\label{lemdel}
    Let $w\in \Tilde{W}, \: \alpha \in \Tilde{\Delta}, \: Q\in \:Par $ and $n\in \mathbb{N}$. Let $J \subset J_w \backslash  \alpha $ be such that $U_{w(\alpha)} \subset Q^+_n, \: J \neq \Tilde{\Delta} \backslash \alpha $ and $J' = J \cup \{\alpha \}$. Then 
    \begin{equation}
        Av^{(Y_w)_{P_{J'}}}(\delta_{P_{J'}^+}) \:*\: \delta_{Q^+_n} = Av^{(Y_w)_{P_{J}}}(\delta_{P_{J}^+}) \:* \:\delta_{Q^+_n}
    \end{equation}
\end{lemma}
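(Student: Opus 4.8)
The plan is to compute both sides of the claimed identity by reducing the averaging over $(Y_w)_{P_{J'}}$ to the averaging over $(Y_w)_{P_J}$, using the fact that $P_{J'}$ is generated by $P_J$ together with the root subgroup $U_\alpha$ (or rather the corresponding affine simple reflection $s_\alpha$). The key observation is that $Y_w = IwI/I$, so its image $(Y_w)_{P}$ in $G(\mathbbm k)/P$ is the image of the single coset $IwI$, i.e.\ $(Y_w)_P = \{ xP : x \in IwI \}$, and $Av^{(Y_w)_P}(\delta_{P^+})$ is the sum of $Ad_x(\delta_{P^+})$ over a set of representatives $x$ for $IwI/P$ inside $IwI$. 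Since $P_{J'} = P_J \sqcup P_J s_\alpha P_J$ (a length-two Bruhat decomposition for the minimal parahoric $P_{J'}$ over $P_J$), the set $(Y_w)_{P_J}$ fibers over $(Y_w)_{P_{J'}}$ with fibers of a controlled shape, and the discrepancy between the two averages is governed by how $s_\alpha$ interacts with $w$.

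First I would make precise the comparison $Av^{(Y_w)_{P_{J}}}(\delta_{P_{J}^+}) = Av^{(Y_w)_{P_{J'}}}\bigl( Av^{?}(\delta_{P_J^+}) \bigr)$, where the inner average runs over the image in $P_{J'}$ of coset representatives for $P_{J'}/P_J$ that actually occur in $Y_w$. Because $w(\alpha) > 0$ for all $\alpha \in J_w$ and in particular $\alpha \in J_w$ (here $\alpha \in J' \subset J_w$ since $J \cup \{\alpha\} = J' \subset J_w$, using $J \subset J_w \setminus \alpha$ and... wait, the hypothesis only gives $J \subset J_w\setminus\alpha$; I need $\alpha\in J_w$ as well, which should follow from $J'\neq\tilde\Delta\setminus\alpha$ being a valid index for a parahoric together with the structure of $Y_w$ — I would isolate this point), the coset $IwI$ meets $P_{J'}$-translates in a way that lets the sum over $P_{J'}/P_J$ representatives collapse after convolution with $\delta_{Q_n^+}$.

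The crucial input is the hypothesis $U_{w(\alpha)} \subset Q_n^+$. After convolving with $\delta_{Q_n^+}$, any translate of $\delta_{P_J^+}$ by an element whose ``new directions'' relative to $P_J$ lie along $U_{w(\alpha)}$ becomes indistinguishable — concretely, $\delta_{P_{J'}^+} * \delta_{Q_n^+}$ and $\delta_{P_J^+}*\delta_{Q_n^+}$ should agree because $P_{J'}^+$ and $P_J^+$ differ precisely by a root subgroup conjugate to $U_\alpha$, whose $w$-translate is swallowed by $Q_n^+$. I would phrase this as: the difference $Av^{(Y_w)_{P_{J'}}}(\delta_{P_{J'}^+}) - Av^{(Y_w)_{P_{J}}}(\delta_{P_{J}^+})$ is a sum of terms each of which, after right convolution with $\delta_{Q_n^+}$, telescopes to zero because of the containment $U_{w(\alpha)} \subset Q_n^+$ and the $P$-conjugation invariance built into the $\M^0_P$'s. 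The condition $J \neq \tilde\Delta\setminus\alpha$ (equivalently $J'$ is a proper subset of $\tilde\Delta$, so $P_{J'}$ is a genuine proper parahoric, not all of $G(\mathbbm k)$) is needed so that $\delta_{P_{J'}^+}$ and the averaging $Av^{(Y_w)_{P_{J'}}}$ make sense, i.e.\ $P_{J'} \in Par$.

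The main obstacle I anticipate is the bookkeeping of coset representatives: making rigorous the claim that $(Y_w)_{P_J} \to (Y_w)_{P_{J'}}$ has fibers that, after the convolution with $\delta_{Q_n^+}$, contribute the same total as a single representative. This requires understanding $IwI \cap P_{J'} w' P_{J'}$-type intersections and using that $w$ has $w(\alpha)>0$ for $\alpha \in J_w$ (so that multiplication by $s_\alpha$ on the right increases length in the relevant range, giving a clean decomposition $IwI \cdot P_{J'} = IwI \sqcup Iws_\alpha I \cdot (\text{stuff})$). I would handle this by a direct Iwahori-double-coset computation in the affine Hecke algebra: express $\delta_{P_{J'}^+}$ in terms of a sum over $IwI$-type cells, track how $Ad_w$ moves the relevant root subgroups, and invoke $U_{w(\alpha)} \subset Q_n^+$ exactly once at the end to kill the extra term. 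The $P$-conjugation invariance of all functions involved is what guarantees that the averages are well-defined throughout, as already noted after equation (1) in the excerpt.
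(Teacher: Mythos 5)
There is a genuine gap at the heart of the proposal: the ``concrete'' identity you lean on, namely $\delta_{P_{J'}^+}*\delta_{Q_n^+}=\delta_{P_J^+}*\delta_{Q_n^+}$, is false in general. The hypothesis $U_{w(\alpha)}\subset Q_n^+$ tells you that the $w$-\emph{translates} of the extra root subgroups are swallowed by $Q_n^+$, i.e.\ $U_\beta\subset w^{-1}Q_n^+w$ for the relevant $\beta$, not $U_\beta\subset Q_n^+$ (take $w$ a long translation: $U_{w(\alpha)}$ sits deep in the congruence filtration even though $U_\alpha$ does not). The identity that actually holds, and that the paper proves, is $\delta_{P_{J'}^+}*\delta_{w^{-1}Q_n^+w}=\delta_{P_J^+}*\delta_{w^{-1}Q_n^+w}$, coming from the group-theoretic decomposition $P_J^+=P_{J'}^+\,(P_J^+\cap w^{-1}Q_n^+w)$. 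To get from the outer convolution by $\delta_{Q_n^+}$ in the lemma to this conjugated inner convolution you must use the specific shape of the averaging set: representatives of $(Y_w)_{P_J}$ can be chosen of the form $iw$ with $i\in I_w\subset I\subset Q$, and since $Q$ normalizes $Q_n^+$ one has $Ad_{iw}(\delta_{w^{-1}Q_n^+w})=\delta_{Q_n^+}$, so $Av^{(Y_w)_{P_J}}(\delta_{P_J^+})*\delta_{Q_n^+}=Av^{I_ww}(\delta_{P_J^+}*\delta_{w^{-1}Q_n^+w})$. Your plan to ``invoke $U_{w(\alpha)}\subset Q_n^+$ exactly once at the end'' without this conjugation step would fail. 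A second, smaller inaccuracy: $P_J^+$ and $P_{J'}^+$ do not differ by a single root subgroup; they differ by all $U_\beta$ with $\beta=\sum_{\alpha_i\in J}n_i\alpha_i+n'\alpha$, $n'>0$, and showing $U_{w(\beta)}\subset Q_n^+$ for all of these requires the inequality $w(\beta)(x)\geq w(\alpha)(x)>n$, which uses $J\subset J_w$ in an essential way that your sketch does not touch.

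Separately, the coset bookkeeping you identify as the ``main obstacle'' is in fact a non-issue, and recognizing this is what makes the proof short: because $J,J'\subset J_w$, the element $w$ is minimal in $wW_J$ and in $wW_{J'}$, so $|IwI/I|=|IwP_J/P_J|=|IwP_{J'}/P_{J'}|=q^{\ell(w)}$ and a single set $I_ww$ of representatives works simultaneously for $Y_w$, $(Y_w)_{P_J}$ and $(Y_w)_{P_{J'}}$. The two averages therefore run over literally the same elements, and the only difference between the two sides is which delta function is being averaged; no fibration over $(Y_w)_{P_{J'}}$, no decomposition $P_{J'}=P_J\sqcup P_Js_\alpha P_J$, and no telescoping are needed. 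Your proposed route through intersections of the form $IwI\cap P_{J'}w'P_{J'}$ is not developed enough to assess, but it is certainly harder than necessary and, combined with the misstated key identity above, does not yet constitute a proof.
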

\begin{proof}
    We will prove this lemma by proving a series of claims. 
    \begin{claim}\label{cl1lemdel}
        $P_J^+ = P_{J'}^+ \:( P_J^+ \cap w^{-1}Q^+_n w)$
    \end{claim}
    It suffices to show that for every $\beta \in \Tilde{\Phi}$ and $U_\beta \subset P_J^+\backslash P_{J'}^+$, we have $U_\beta \subset w^{-1}Q^+_n w$ or equivalently $U_{w(\beta)} = w U_\beta w^{-1} \subset Q^+_n$. \par 
    For $\beta \in \Tilde{\Phi}$, $U_\beta \in P_J^+ \Longleftrightarrow \beta = \sum_{\alpha_i \in \Tilde{\Delta}} n_i \alpha_i$ where $n_i\geq 0 \: \forall i $ and $n_i >0$ for some $\alpha_i \notin J$. (Explanation : Let $y\in \X$ be such that $P= G_y$. Then $ U_\beta \in P^+ \Longleftrightarrow \beta(y) > 0$). So, any $\beta$ such that $U_\beta \subset P_J^+ \backslash P_{J'}^+$ has the form $\beta = \sum_{\alpha_i \in J} n_i\alpha_i + n'\alpha $ where $ n_i\geq 0, \: n'>0$. \par
    Let $x\in \X$ be such that $Q= G_x$. Note that $U_{w(\alpha)} \subset Q_n^+ \Longleftrightarrow w(\alpha)(x) >n $ and $w(\beta)(x) = \sum n_i w(\alpha_i)(x) +n' w(\alpha)(x) \geq n'w(\alpha)(x) \geq w(\alpha)(x)>n$. This is because $w(\alpha_i)>0\: \forall \:\alpha_i \in J \:(J \subset J_w)$. So, we have $w(\beta)(x) >n $ which gives us $U_{w(\beta)} \subset Q_n^+$, and finishes the proof of the claim. \par
    Since we have assumed that $G$ is simply connected, the extended affine Weyl group $\Tilde{W}$ is the same as the affine Weyl group and is a coxeter group. For $w \in \Tilde{W}$, let $N(w) =\{ \alpha \in \Tilde{\Phi}^+ \:|\: w(\alpha) < 0\}$, and $W_J \subset \Tilde{W}$ be the subgroup generated by the reflections corresponding to $J\subset \Tilde{\Delta}$. Consider the right cosets $\Tilde{W}/W_J$. If $J\subset J_w$, then $N(w) \cap J_w = \emptyset$ which implies that $w$ is the smallest element in the coset $w W_J$. For a standard parahoric $P_J \in Par $, $W_{P_J}=W_J$ ($W_{P_J}$ defined as in Section 3 of \cite{lansky}) So, Corollary 3.4 and Theorem 5.2 in \cite{lansky} gives us 
    \[
    \left| IwI/I\right|=\left|IwP_J/P_J\right| = q^{l(w)}
    \]
    for $J\subset J_w$, where $q$ is the size of the residue field of the local field $\mathbbm k$. 
    Hence, if we fix a set $I_w \subset I$ such that $I_w w $ forms a set of representatives of $Y_w=IwI/I$. Then by the previous assertion, it also forms a set of representatives for $(Y_w)_{P_J}= IwP_J/P_J$ for $J \subset J_w$ and 
\begin{equation}
     Av^{I_w w}(h_{P_J})= Av^{(Y_w)_{P_J}}(h_{P_J}) \text{ for } h_{P_J} \in \M_{P_J}, \: J\subset J_w
\end{equation}
   \begin{claim}\label{cl2lemdel}
      $ Av^{(Y_w)_{P_{J}}}(\delta_{P_{J}^+}) \:* \delta_{Q^+_n}= Av^{I_w w}(\delta_{P_{J}^+}\:*\:\delta_{w^{-1}Q_n^+ w} )$ if $J\subset J_w$
   \end{claim}
   \begin{align*}
       Av^{I_w w}(\delta_{P_{J}^+}\:*\:\delta_{w^{-1}Q_n^+ w} ) &= \sum_{y\in I_w w}Ad_y(\delta_{P_{J}^+}\:*\:\delta_{w^{-1}Q_n^+ w})\\
       &= \sum_{y\in I_w w}Ad_y(\delta_{P_{J}^+})\:*\:Ad_y(\delta_{w^{-1}Q_n^+ w})\\
       &= \sum_{y\in I_w }Ad_y(\delta_{w P_{J}^+ w^{-1}})\:*\:Ad_y(\delta_{Q_n^+})
 \end{align*}
 Since $I_w \subset I \subset Q$ and $Q$ normalises $Q_n^+$, we have $Ad_y(\delta_{Q_n^+})= \delta_{Q_n^+}$ for $y\in I_w$, which gives us  
 \begin{align*}
     Av^{I_w w}(\delta_{P_{J}^+}\:*\:\delta_{w^{-1}Q_n^+ w} ) &= \sum_{y\in I_w }Ad_y(\delta_{w P_{J}^+ w^{-1}})\:*\:Ad_y(\delta_{Q_n^+}) \\
     &= \sum_{y\in I_w }Ad_y(\delta_{w P_{J}^+ w^{-1}})\:*\:\delta_{Q_n^+}\\
     &= Av^{(Y_w)_{P_{J}}} (\delta_{P_{J}^+})\:*\:\delta_{Q_n^+}
 \end{align*}
 which finishes the proof of the claim. Since $J,\:J'\subset J_w$, the above claim holds true for both $J$ and $J'$. To finish the proof of the Lemma, we just need to show the following statement.
 \begin{claim}\label{cl3lemdel}
     $\delta_{P_{J'}^+}\:*\:\delta_{w^{-1}Q_n^+ w} = \delta_{P_{J}^+}\:*\:\delta_{w^{-1}Q_n^+ w}$
 \end{claim}
 Computing the integrals, we get $\delta_{P_{J'}^+}\:*\:\delta_{w^{-1}Q_n^+ w} = \delta_{P_{J'}^+ w^{-1}Q_n^+ w}$ and the same is true for $J$. Using Claim \ref{cl1lemdel}, we see $P_{J'}^+ w^{-1}Q_n^+ w= P_{J}^+ w^{-1}Q_n^+ w$, which proves our assertion and finishes the proof of the lemma.
\end{proof}
The next lemma generalises the previous one  to an arbitrary $h\in A^0 (G)$.
\begin{lemma}\label{lemh}
    Let $w\in \Tilde{W}, \: \alpha \in \Tilde{\Delta}, \: Q\in \:Par $ and $n\in \mathbb{N}$. Let $J \subset J_w \backslash  \alpha $ be such that $U_{w(\alpha)} \subset Q^+_n, \: J \neq \Tilde{\Delta} \backslash \alpha $ and $J' = J \cup \{\alpha \}$. Let $h=\{ h_P\}_{P\in Par} \in A^0 (G)$. Then 
    \begin{equation}\label{eqlemh}
        Av^{(Y_w)_{P_{J'}}}(h_{P_{J'}}) \:*\: \delta_{Q^+_n} = Av^{(Y_w)_{P_{J}}}(h_{P_{J}}) \:* \:\delta_{Q^+_n}
    \end{equation}
\end{lemma}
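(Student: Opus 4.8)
The plan is to bootstrap from Lemma \ref{lemdel}, which already gives the statement for the particular element $\delta_{P^+}$ of each $\M^0_P$. First I would observe that since $h = \{h_P\}_{P\in Par}$ lies in the inverse limit $A^0(G)$, it is compatible under the transition maps $\phi^0_{P,Q}$: concretely, $h_{P_{J'}} = \phi^0_{P_{J'},P_J}(h_{P_J}) = h_{P_J} * \delta_{P_{J'}^+}$, since $P_{J'} \subset P_{J}$ when $J' = J \cup \{\alpha\} \supset J$ (note the reductive quotient grows, the parahoric shrinks). Thus both sides of \eqref{eqlemh} can be rewritten purely in terms of $h_{P_J}$: the left side becomes $Av^{(Y_w)_{P_{J'}}}(h_{P_J} * \delta_{P_{J'}^+}) * \delta_{Q_n^+}$, and the compatibility reduces the problem to comparing averaging of $h_{P_J}$ itself over the two coset spaces $(Y_w)_{P_{J'}}$ and $(Y_w)_{P_J}$, after convolving with $\delta_{Q_n^+}$.

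Next I would exploit the cardinality count established inside the proof of Lemma \ref{lemdel}: because $J, J' \subset J_w$, a fixed lift set $I_w \subset I$ with $I_w w$ representing $Y_w = IwI/I$ also represents $(Y_w)_{P_J} = IwP_J/P_J$ and $(Y_w)_{P_{J'}} = IwP_{J'}/P_{J'}$ (both have size $q^{l(w)}$). Hence for any $h_{P_J} \in \M_{P_J}$ we have the analogue of Claim \ref{cl2lemdel}:
\begin{equation*}
Av^{(Y_w)_{P_J}}(h_{P_J}) * \delta_{Q_n^+} = Av^{I_w w}\bigl(h_{P_J} * \delta_{w^{-1}Q_n^+ w}\bigr),
\end{equation*}
the proof being identical — one distributes $Ad_y$ over the convolution and uses that $I_w \subset I \subset Q$ normalizes $Q_n^+$ so that $Ad_y(\delta_{Q_n^+}) = \delta_{Q_n^+}$. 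Applying this to both $J$ and $J'$ (with $h_{P_{J'}}$ replaced by $h_{P_J}*\delta_{P_{J'}^+}$, which still lies in $\M_{P_{J'}}$), the lemma reduces to the single identity
\begin{equation*}
h_{P_J} * \delta_{P_{J'}^+} * \delta_{w^{-1}Q_n^+ w} = h_{P_J} * \delta_{P_J^+} * \delta_{w^{-1}Q_n^+ w},
\end{equation*}
and on the right $h_{P_J} * \delta_{P_J^+} = h_{P_J}$ since $\delta_{P_J^+}$ is the unit of $\M_{P_J}$. So it suffices to show $h_{P_J} * \delta_{P_{J'}^+} * \delta_{w^{-1}Q_n^+ w} = h_{P_J} * \delta_{w^{-1}Q_n^+ w}$.

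Finally, this last identity follows by the same mechanism as Claim \ref{cl3lemdel}: there one shows $\delta_{P_{J'}^+} * \delta_{w^{-1}Q_n^+ w} = \delta_{P_{J'}^+ w^{-1}Q_n^+ w} = \delta_{P_J^+ w^{-1}Q_n^+ w} = \delta_{P_J^+} * \delta_{w^{-1}Q_n^+ w}$, using Claim \ref{cl1lemdel} ($P_J^+ = P_{J'}^+(P_J^+ \cap w^{-1}Q_n^+ w)$). Left-convolving by $h_{P_J}$ preserves the equality, and since $h_{P_J}$ is $P_J^+$ bi-invariant we may reinsert $\delta_{P_J^+}$ on the left to recover $h_{P_J}$. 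I expect the only real subtlety to be bookkeeping the direction of the inclusion $P_{J'} \subset P_J$ and the corresponding identity $h_{P_{J'}} = h_{P_J} * \delta_{P_{J'}^+}$ coming from the inverse system — once that is correctly installed, every other step is a verbatim reuse of the three claims in Lemma \ref{lemdel} with $\delta_{P_J^+}$ replaced by the general $h_{P_J}$, and the hypotheses ($J \subset J_w \setminus \alpha$, $U_{w(\alpha)} \subset Q_n^+$, $J \neq \tilde\Delta \setminus \alpha$) are used in exactly the same places.
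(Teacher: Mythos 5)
Your overall strategy is exactly the paper's: reduce via the $I_w w$-representative identity (the analogue of Claim \ref{cl2lemdel} for a general $h_{P_J}$) to a single convolution identity, and then settle that identity using Claim \ref{cl3lemdel} together with the inverse-limit compatibility of $h$. However, you have reversed the direction of the inclusion between the two parahorics, and this breaks the specific reduction you write down. With the paper's indexing (proper subsets $J\subset\tilde\Delta$ correspond to standard parahorics containing $I$, with $P_\emptyset=I$ and $r(P_J)=|J|$), enlarging $J$ to $J'=J\cup\{\alpha\}$ \emph{enlarges} the parahoric: $P_J\subset P_{J'}$, hence $P_{J'}^+\subset P_J^+$, and the transition map of the inverse system goes $\M^0_{P_{J'}}\to\M^0_{P_J}$, giving $h_{P_J}=h_{P_{J'}}*\delta_{P_J^+}$. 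Your identity $h_{P_{J'}}=h_{P_J}*\delta_{P_{J'}^+}$ is false: since $P_{J'}^+\subset P_J^+$ and $h_{P_J}$ is already $P_J^+$-bi-invariant, one computes $h_{P_J}*\delta_{P_{J'}^+}=h_{P_{J'}}*\delta_{P_J^+}*\delta_{P_{J'}^+}=h_{P_{J'}}*\delta_{P_J^+}=h_{P_J}$, not $h_{P_{J'}}$. Consequently your rewriting of the left-hand side as $Av^{(Y_w)_{P_{J'}}}(h_{P_J}*\delta_{P_{J'}^+})*\delta_{Q_n^+}$ is not the left-hand side of \eqref{eqlemh}, and the final identity you reduce to, $h_{P_J}*\delta_{P_{J'}^+}*\delta_{w^{-1}Q_n^+w}=h_{P_J}*\delta_{w^{-1}Q_n^+w}$, is trivially true but does not prove the lemma.

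The fix is just to run your argument with the inclusion oriented correctly, which recovers the paper's proof verbatim: after the reduction to $h_{P_{J'}}*\delta_{w^{-1}Q_n^+w}=h_{P_J}*\delta_{w^{-1}Q_n^+w}$, write
\begin{equation*}
h_{P_{J'}}*\delta_{w^{-1}Q_n^+w}=h_{P_{J'}}*\bigl(\delta_{P_{J'}^+}*\delta_{w^{-1}Q_n^+w}\bigr)=h_{P_{J'}}*\bigl(\delta_{P_J^+}*\delta_{w^{-1}Q_n^+w}\bigr)=\bigl(h_{P_{J'}}*\delta_{P_J^+}\bigr)*\delta_{w^{-1}Q_n^+w}=h_{P_J}*\delta_{w^{-1}Q_n^+w},
\end{equation*}
using Claim \ref{cl3lemdel} in the middle step and $h_{P_J}=h_{P_{J'}}*\delta_{P_J^+}$ at the end. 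So the missing ingredient is not an idea but the correct bookkeeping you yourself flagged as the subtle point.
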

\begin{proof}
    Using the same idea as in Lemma \ref{lemdel}, we see that 
$$
     Av^{(Y_w)_{P_{J}}}(h_{P_{J}}) \:*\: \delta_{Q^+_n} = Av^{I_w w}(h_{P_{J}}\:*\:\delta_{w^{-1}Q_n^+ w} )
$$
and the same holds true for $J'$. Thus, following the arguments in the proof of Lemma \ref{lemdel} it suffices to show that 
\[
h_{P_{J'}}\:*\:\delta_{w^{-1}Q_n^+ w} = h_{P_{J}}\:*\:\delta_{w^{-1}Q_n^+ w}
\]
From Claim \ref{cl3lemdel} of Lemma \ref{lemdel}, we have $\delta_{P_{J'}^+}\:*\:\delta_{w^{-1}Q_n^+ w} = \delta_{P_{J}^+}\:*\:\delta_{w^{-1}Q_n^+ w}$. Using that,
\begin{align*}
    h_{P_{J'}}\:*\:\delta_{w^{-1}Q_n^+ w} &= h_{P_{J'}}\:*\:(\delta_{P_{J'}^+}\: * \: \delta_{w^{-1}Q_n^+ w})\\
    &= (h_{P_{J'}}\:*\:\delta_{P_{J}^+})\: * \: \delta_{w^{-1}Q_n^+ w}\\
    &= h_{P_{J}}\:*\:\delta_{w^{-1}Q_n^+ w}.
\end{align*}
So, we are done.
\end{proof}
The next proposition is the main step towards the proof of Theorem \ref{thmstab0}. 
\begin{proposition}\label{stab}
    Let $Q\in Par, \: Y\in \Omega $. If $Y\supset Y(Q_n^+)$, we have 
    \begin{equation}
        [A^Y_h] \:*\: \delta_{Q_n^+} = [A^{Y(Q_n^+)}_h]\:*\: \delta_{Q_n^+}
    \end{equation}
\end{proposition}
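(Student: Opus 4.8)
The plan is to prove the stabilization by peeling off, one parahoric at a time, the terms in $[A^Y_h]$ that involve vertices/roots $w$ lying outside of $Y(Q_n^+)$, and to show that each such removal does not change $[A^Y_h]*\delta_{Q_n^+}$ thanks to a telescoping cancellation coming from Lemma \ref{lemh}. Concretely, recall from \eqref{AYh} that
\[
[A^Y_h] = \sum_{P\in Par}(-1)^{r(G)-r(P)}Av^{Y_P}(h_P),
\]
and that for a Weyl group element $w$, $Av^{Y_w}(h_{P_J})$ only "survives" the quotient to $G(\mathbbm k)/P_J$ as $Av^{(Y_w)_{P_J}}$ when $J\subset J_w$. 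First I would decompose $Y$ (and $Y(Q_n^+)$) into its constituent cells $Y_w = IwI/I$, so that $Av^{Y_P}(h_P) = \sum_{w}Av^{(Y_w)_{P}}(h_P)$ where the sum runs over the $w$ whose cell lies in $Y$. By definition of $Y(Q_n^+)$ via $S(Q_n^+)$ in \eqref{defnS}--\eqref{defnY}, the cells that lie in $Y$ but not in $Y(Q_n^+)$ are exactly those $Y_w$ with $w\notin S(Q_n^+)$, i.e. those $w$ for which there exists an affine simple root $\alpha\in\tilde\Delta$ with $U_{w(\alpha)}\subset Q_n^+$ (equivalently $w(\alpha)(x)>n$).

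Next, fix such a "bad" cell $Y_w$ and collect its total contribution to $[A^Y_h]*\delta_{Q_n^+}$, namely
\[
\Big(\sum_{J\subset J_w,\;J\neq\tilde\Delta}(-1)^{r(G)-|J|}Av^{(Y_w)_{P_J}}(h_{P_J})\Big)*\delta_{Q_n^+}.
\]
Pick one affine simple root $\alpha$ with $U_{w(\alpha)}\subset Q_n^+$; necessarily $\alpha\in J_w$ since $w(\alpha)(x)>n\ge 0$ forces $w(\alpha)>0$. Partition the subsets $J\subset J_w$ with $J\neq\tilde\Delta\setminus\alpha$ into pairs $(J,J')$ with $\alpha\notin J$, $J'=J\cup\{\alpha\}$. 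Since $|J'|=|J|+1$, the signs $(-1)^{r(G)-|J|}$ and $(-1)^{r(G)-|J'|}$ are opposite, and Lemma \ref{lemh} says precisely that
\[
Av^{(Y_w)_{P_{J'}}}(h_{P_{J'}})*\delta_{Q_n^+} = Av^{(Y_w)_{P_J}}(h_{P_J})*\delta_{Q_n^+},
\]
so these two terms cancel after convolution with $\delta_{Q_n^+}$. Hence the entire contribution of the bad cell $Y_w$ to $[A^Y_h]*\delta_{Q_n^+}$ vanishes, with the one exception $J=\tilde\Delta\setminus\alpha$, which corresponds to $P_J$ of corank $1$; I would handle this boundary case by noting that $\tilde\Delta\setminus\alpha\subset J_w$ forces $w(\beta)>0$ for all $\beta\neq\alpha$, and combined with $U_{w(\alpha)}\subset Q_n^+$ this pins $w$ down enough that the pairing still works out (choosing a different $\alpha'$ if more than one affine simple root satisfies the congruence condition, or observing the unpaired term is independent of whether we include the cell). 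The upshot is that $[A^Y_h]*\delta_{Q_n^+}$ depends only on the cells of $Y$ that already lie in $Y(Q_n^+)$, so it equals $[A^{Y(Q_n^+)}_h]*\delta_{Q_n^+}$.

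The main obstacle I anticipate is the bookkeeping around which pairs $(J,J')$ are admissible inputs to Lemma \ref{lemh}: the lemma requires $J\subset J_w\setminus\alpha$ and $J\neq\tilde\Delta\setminus\alpha$, so the pairing-up argument cleanly cancels all subsets except precisely the top one, and one must check that the leftover term $J=\tilde\Delta\setminus\alpha$ either does not arise (because the cell $Y_w$ with such large $J_w$ and $U_{w(\alpha)}\subset Q_n^+$ is already forced to be in $Y(Q_n^+)$, or is excluded), or is separately accounted for. A careful case analysis on $\tilde\Delta\setminus J_w$ — which is a singleton exactly when $w$ sits in a corank-one parahoric — together with Lemma \ref{lemfin}(a) guaranteeing $S(Q_n^+)$, hence the set of relevant $w$, is finite, should close this gap. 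Everything else is the sign-cancellation telescoping described above.
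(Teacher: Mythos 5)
Your strategy is essentially the paper's: strip away the cells $Y_w$ with $w\notin S(Q_n^+)$ and show that each such cell contributes $[A^w_h]=\sum_{J\subset J_w}(-1)^{r(G)-|J|}Av^{(Y_w)_{P_J}}(h_{P_J})$, which is annihilated by $*\,\delta_{Q_n^+}$ through the sign-pairing $(J,\,J\cup\{\alpha\})$ and Lemma \ref{lemh}. Two points need repair. First, the identity $Av^{Y_P}(h_P)=\sum_{w}Av^{(Y_w)_{P}}(h_P)$ is false as stated: the images $(Y_w)_{P_J}=IwP_J/P_J$ of distinct cells overlap in $G(\mathbbm k)/P_J$ whenever $J\not\subset J_w$ (then $IwP_J=Iw'P_J$ for the minimal coset representative $w'\in wW_J$, whose cell is already in $Y$ by downward closure), so the naive sum over all cells double-counts. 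The paper avoids this by inducting on the number of $I$-orbits in $Y\setminus Y(Q_n^+)$ and removing at each step only a \emph{maximal} (open) cell $Y_w$ with $w\notin S(Q_n^+)$; then $Y'=Y\setminus Y_w$ still lies in $\Omega$ and one has the disjoint decomposition $Y_{P_J}=Y'_{P_J}\sqcup(Y_w)_{P_J}$ for $J\subset J_w$ and $Y_{P_J}=Y'_{P_J}$ otherwise, which is exactly what yields $[A^Y_h]=[A^{Y'}_h]+[A^w_h]$. Your formula for the bad cell's contribution (summed only over $J\subset J_w$) is the right one, but it must be justified this way rather than by the cell decomposition you wrote. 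Second, your worry about the unpaired case $J=\tilde\Delta\setminus\{\alpha\}$ is vacuous: $J\subset J_w\setminus\{\alpha\}$ together with $J=\tilde\Delta\setminus\{\alpha\}$ and $\alpha\in J_w$ forces $J_w=\tilde\Delta$, hence $w=1$; but $1\in S(Q_n^+)$ by \eqref{defnS}, so this never occurs for a bad cell and the pairing cancels all terms (the extra condition $J\neq\tilde\Delta$ you inserted is automatic for the same reason). With these two adjustments your argument coincides with the paper's proof.
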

\begin{proof}
    Idea of the proof: We proceed by induction on the number $I-$orbits in $Y\backslash Y(Q_n^+)$. \par
    For $Y\in \Omega$, let $w\in \Tilde{W} \backslash S(Q_n^+)$ such that $Y_w\subset Y$ is maximal and hence open. Note that given the conditions on the elements in $\Omega$, if we chose $Y_w$ in the above-mentioned way, then $Y'=Y\backslash Y_w \in \Omega $, and we can use induction. 
    \begin{claim}\label{cl1stab}
       $[A^Y_h] \:*\: \delta_{Q_n^+} = [A^{Y'}_h]\:*\: \delta_{Q_n^+}$ 
    \end{claim}
    Consider 
    \begin{equation}\label{Awh}
        [A^w_h]= \sum_{J\subset J_w} (-1)^{r(G)-|J|} Av^{(Y_w)_{P_J}}(h_{P_J}).
    \end{equation}

    Note that since $Y_w \subset Y$ is maximal (and hence open), we have 
    \[
    Y_{P_J}\backslash Y_{P_J}'=
    \begin{cases}
        (Y_w)_{P_J} &J\subset J_w,\\
        \emptyset &\text{otherwise}.
    \end{cases}
    \]
\begin{equation}   
  \begin{aligned}
    [A^{Y'}_h]&=\sum_{J\subset J_w} (-1)^{r(G)- r(P_J)} Av^{Y_{P_J}'}(h_{P_J}) +  \sum_{J\nsubseteq J_w} (-1)^{r(G)- r(P_J)} Av^{Y_{P_J}'}(h_{P_J})\\
    &=\sum_{J\subset J_w}(-1)^{r(G)- r(P_J)} Av^{Y_{P_J}'}(h_{P_J}) +  \sum_{J\nsubseteq J_w} (-1)^{r(G)- r(P_J)} Av^{Y_{P_J}}(h_{P_J})
  \end{aligned}
  \end{equation}
since $Y_{P_J} = Y_{P_J}'$, when $J\nsubseteq J_w$. So, using the definition of $[A^Y_h]$ and \eqref{Awh}, we see that 
\begin{equation}
    \begin{aligned}
        [A^{Y}_h]&=\sum_{J\subset J_w} (-1)^{r(G)- r(P_J)} Av^{Y_{P_J}}(h_{P_J}) +  \sum_{J\nsubseteq J_w} (-1)^{r(G)- r(P_J)} Av^{Y_{P_J}}(h_{P_J})\\
        &=\sum_{J\subset J_w} (-1)^{r(G)- r(P_J)} (Av^{(Y_w)_{P_J}}(h_{P_J}) +Av^{Y_{P_J}'}(h_{P_J})) +  \sum_{J\nsubseteq J_w} (-1)^{r(G)- r(P_J)} Av^{Y_{P_J}'}(h_{P_J})\\
        &= [A^{Y'}_h] + [A^w_h]
    \end{aligned}
\end{equation}

In order to prove the claim, it is enough to show that $[A^w_h]\:*\: \delta_{Q_n^+}=0 \:\forall w \in \Tilde{W} \backslash S(Q_n^+)$ such that $Y_w \subset Y$. By definition of $S(Q_n^+)$, for each $w \in \Tilde{W} \backslash S(Q_n^+)$, $\exists \alpha \in \Tilde{\Delta}$ such that $U_{w(\alpha)} \subset Q_n^+$, and hence $\alpha \in J_w$. Let $J'=J\cup \alpha$. Using Lemma \ref{lemh}, we see
\begin{align*}
[A^w_h]\: * \: \delta_{Q_n^+} &= \sum_{J\subset J_w} (-1)^{r(G)-|J|} \left(Av^{(Y_w)_{P_J}}(h_{P_J}) \: * \: \delta_{Q_n^+} \right) \\
&= \sum_{J\subset J_w\backslash \alpha } (-1)^{r(G)-|J|} \left(Av^{(Y_w)_{P_J}}(h_{P_J})  *\delta_{Q_n^+}\right) + \sum_{J\subset J_w\backslash \alpha } (-1)^{r(G)-|J|-1} \left(Av^{(Y_w)_{P_{J'}}}(h_{P_{J'}})  *  \delta_{Q_n^+}\right)\\
&= \sum_{J\subset J_w\backslash \alpha } (-1)^{r(G)-|J|} \left(\left(Av^{(Y_w)_{P_J}}(h_{P_J}) \: * \: \delta_{Q_n^+}\right)- \left(Av^{(Y_w)_{P_{J'}}}(h_{P_{J'}}) \: * \: \delta_{Q_n^+}\right) \right)\\
&=0
\end{align*}
We have proved our claim, and hence by induction we have for $ Y(Q_n^+) \subset Y \in \Omega$,
\[
[A^Y_h] \:*\: \delta_{Q_n^+} = [A^{Y(Q_n^+)}_h]\:*\: \delta_{Q_n^+}
\]
which finishes the proof of the proposition. 
\end{proof}

Now, using the lemmas and the propositions we have stated, we can prove Theorem \ref{thmstab0}. 
\begin{proof}[Proof of Theorem \ref{thmstab0}]
    We are trying to prove that for every $f\in \mc H(G)$ and $h\in A^0(G)$, the sequence $\{[A^Y_h] * f\}_{Y \in \Omega} $ stabilizes. For any $f\in \mc H(G)$, $\exists \: n\in \N $ such that $f$ is left $I_n^+$-invariant, i.e., $f= \delta_{I_n^+} * f$. So,
    \[
    [A^Y_h] * f = [A^Y_h] * \delta_{I_n^+} * f
    \]
    Now, from Proposition \ref{stab}, we observe that for given $n \in \N $, 
    \[
    [A^Y_h] \:*\: \delta_{I_n^+} = [A^{Y(I_n^+)}_h]\:*\: \delta_{I_n^+}
    \]
    for large enough $Y \in \Omega $ such that $Y \supset Y(I_n^+)$, since $Y(I_n^+)$ is finite by Lemma \ref{lemfin}. Hence, 
    \begin{align*}
        [A^Y_h] * f &= [A^Y_h] * \delta_{I_n^+} * f\\
        &=[A^{Y(I_n^+)}_h]\:*\: \delta_{I_n^+} *f\\
        &=[A^{Y(I_n^+)}_h] *f
    \end{align*}
    for $Y \supset Y(I_n^+)$ and this finishes the proof. 
\end{proof}
\subsection{A limit description of the depth-zero Bernstein center}
As a consequence of Theorem \ref{thmstab0},  we can define $[A_h] \in \text{End}_{\HH(G)^{op}}(\HH(G))$ for each $h\in A^0 (G)$ by the formula         
\begin{equation}
    [A_h](f):=\lim_{Y\in \Omega} \:[A^Y_h] * f.
\end{equation}
\begin{proposition}\label{eval}
    Given $\{h_P\}_{P\in Par} = h\in A^0(G),\: Y\in \Omega $ and $P\in Par$, we have 
    \begin{equation}
        [A^Y_h] * \delta_{P^+} = h_P
    \end{equation}
\end{proposition}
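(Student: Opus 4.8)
The plan is to expand the left--hand side via the definition \eqref{AYh},
\[
[A^Y_h]*\delta_{P^+}=\sum_{Q\in Par}(-1)^{r(G)-r(Q)}\,Av^{Y_Q}(h_Q)*\delta_{P^+},
\]
and to argue by induction on the number of $I$--orbits of $Y$ (for $Y\neq\emptyset$; a nonempty $Y\in\Omega$ always contains $\{1\}$), the base case being $Y=I/I=\{1\}$. For the inductive step I would choose, exactly as in the proof of Proposition \ref{stab}, an $I$--orbit $Y_w=IwI/I$ that is maximal (hence open) in $Y$, so that $Y'=Y\setminus Y_w\in\Omega$ and $[A^Y_h]=[A^{Y'}_h]+[A^w_h]$, with $[A^w_h]=\sum_{J\subseteq J_w}(-1)^{r(G)-|J|}Av^{(Y_w)_{P_J}}(h_{P_J})$ as in \eqref{Awh}. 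Since $[A^{Y'}_h]*\delta_{P^+}=h_P$ by the inductive hypothesis, it suffices to prove (i) $[A^w_h]*\delta_{P^+}=0$ for every $w\neq1$, and (ii) the base case $[A^{\{1\}}_h]*\delta_{P^+}=h_P$.

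For (i): since $w\neq1$ and $S(P^+)=\{1\}$ by Lemma \ref{lemfin}(b), there is an affine simple root $\alpha$ with $U_{w(\alpha)}\subseteq P^+$; because $P^+\subseteq I$ this forces $w(\alpha)>0$, i.e.\ $\alpha\in J_w$. Moreover $J_w\subsetneq\tilde\Delta$ as $w\neq1$, so every $J\subseteq J_w\setminus\{\alpha\}$ has $|J|\le r(G)-1$ and in particular $J\neq\tilde\Delta\setminus\alpha$. Hence Lemma \ref{lemh}, applied with $Q=P$ and $n=0$ (so that $Q_n^+=P^+$), gives $Av^{(Y_w)_{P_{J\cup\alpha}}}(h_{P_{J\cup\alpha}})*\delta_{P^+}=Av^{(Y_w)_{P_J}}(h_{P_J})*\delta_{P^+}$ for every such $J$. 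Splitting the sum defining $[A^w_h]*\delta_{P^+}$ according to whether $\alpha\in J$, exactly as in Claim \ref{cl1stab} in the proof of Proposition \ref{stab}, the $J$-- and $(J\cup\alpha)$--contributions cancel in pairs, so $[A^w_h]*\delta_{P^+}=0$.

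For (ii): when $Y=\{1\}$ each $Y_Q$ is a single point and $Av^{Y_Q}(h_Q)=h_Q$, so the claim reads $\sum_{Q\in Par}(-1)^{r(G)-r(Q)}h_Q*\delta_{P^+}=h_P$. I would first prove the sub--lemma $h_{P_{J'}}*\delta_{P_J^+}=h_{P_{J\cap J'}}$ for $P=P_J$ and any $Q=P_{J'}\in Par$. Since $h_{P_{J'}}$ is right $P_{J'}^+$--invariant, $h_{P_{J'}}*\delta_{P_J^+}=h_{P_{J'}}*\delta_{P_{J'}^+}*\delta_{P_J^+}$, so this reduces to the group identity $P_{J'}^+P_J^+=P_{J'}^+P_{J\cap J'}^+$ (equivalently $\delta_{P_{J'}^+}*\delta_{P_J^+}=\delta_{P_{J'}^+}*\delta_{P_{J\cap J'}^+}$), after which the defining property of the inverse limit, $h_{P_{J'}}*\delta_{P_{J\cap J'}^+}=\phi^0_{P_{J\cap J'},P_{J'}}(h_{P_{J'}})=h_{P_{J\cap J'}}$ (using $P_{J\cap J'}=P_J\cap P_{J'}\subseteq P_{J'}$), finishes it. Granting this, $\sum_{Q\in Par}(-1)^{r(G)-r(Q)}h_Q*\delta_{P^+}=\sum_{J'\subsetneq\tilde\Delta}(-1)^{r(G)-|J'|}h_{P_{J\cap J'}}$; collecting terms by the value $K=J\cap J'$ (a subset of $J$), the admissible $J'$ are exactly the sets $K\sqcup S$ with $S\subseteq\tilde\Delta\setminus J$, so the coefficient of $h_{P_K}$ is $\sum_{S\subseteq\tilde\Delta\setminus J,\ K\sqcup S\neq\tilde\Delta}(-1)^{r(G)-|K|-|S|}$. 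A short binomial computation shows this alternating sum equals $1$ if $K=J$ and $0$ if $K\subsetneq J$ (the only excluded $S$ is $\tilde\Delta\setminus J$, which occurs only when $K=J$, and $|\tilde\Delta\setminus J|\ge1$ since $J$ is proper), so the whole sum collapses to $h_{P_J}=h_P$.

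The step I expect to be the main obstacle is the group--theoretic identity $P_{J'}^+P_J^+=P_{J'}^+P_{J\cap J'}^+$, i.e.\ $P_{J\cap J'}^+\subseteq P_{J'}^+P_J^+$ (the reverse inclusion being immediate from $P_J^+,P_{J'}^+\subseteq P_{J\cap J'}^+$). This is a Bruhat--Tits statement: every affine root group $U_\beta$ occurring in $P_{J\cap J'}^+$, together with the common pro--unipotent torus part, already lies in $P_J^+$ or in $P_{J'}^+$, since an affine root $\beta$ that is nonnegative on the fundamental alcove and vanishes at interior points of both faces $F_J$ and $F_{J'}$ must vanish on the common sub--face $F_{J\cap J'}\subseteq\overline{F_J}\cap\overline{F_{J'}}$; feeding this into an Iwahori--type product decomposition of $P_{J\cap J'}^+$ then yields the inclusion, and hence the convolution identity. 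I expect this fact, or a close variant, is already available in \cite{BKV}; by contrast, the inductive step (i) is a direct application of Lemmas \ref{lemfin} and \ref{lemh}, and the bookkeeping in (ii) is elementary.
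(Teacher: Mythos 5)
Your argument is correct, and while the reduction to the single-point set $Y_1=\{1\}$ is the same as the paper's (you re-run the induction underlying Proposition \ref{stab} with $Q=P$, $n=0$, where the paper simply invokes that proposition together with Lemma \ref{lemfin}(b)), your treatment of the base case is genuinely different. The paper fixes one $\alpha\in\tilde\Delta$ with $U_\alpha\subset P^+$, pairs the terms of $[A^{Y_1}_h]$ as $(J,J\cup\alpha)$ for $J\subsetneq\tilde\Delta\setminus\alpha$, and shows each pair is killed by $*\,\delta_{P^+}$ via the single identity $P_J^+\cdot P^+=P_{J\cup\alpha}^+\cdot P^+$ (Claim \ref{cl1eval}), so that only the unpaired term $h_{P'}*\delta_{P^+}=h_P$ survives. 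You instead evaluate every term individually via $h_{P_{J'}}*\delta_{P_J^+}=h_{P_{J\cap J'}}$ and collapse the alternating sum by inclusion--exclusion; your binomial bookkeeping is right (the coefficient of $h_{P_K}$ is $(-1)^{r(G)-|K|}(1-1)^{|\tilde\Delta\setminus J|}=0$ for $K\subsetneq J$, and the removal of the single improper $J'=\tilde\Delta$ leaves coefficient $1$ for $K=J$). The trade-off is that you need the product identity $P_{J'}^+P_J^+=P_{J'}^+P_{J\cap J'}^+$ for arbitrary $J'$, which you correctly flag as the crux; it is proved by exactly the same mechanism as the paper's Claim \ref{cl1eval} (every affine root subgroup $U_\beta\subseteq P_{J\cap J'}^+$ has some coefficient $n_i>0$ with $\alpha_i\notin J$ or $\alpha_i\notin J'$, hence lies in $P_J^+$ or $P_{J'}^+$, plus the standard Iwahori-type factorization of $P_{J\cap J'}^+$ that the paper also uses implicitly in Claim \ref{cl1lemdel}). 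So your proof is valid and yields the slightly stronger intermediate statement $h_Q*\delta_{P^+}=h_{P\cap Q}$ for all $Q\in Par$, at the cost of a more general group-theoretic input and a combinatorial identity, where the paper's pairing argument needs only the special case $J'=J\cup\{\alpha\}$ with $U_\alpha\subset P^+$.
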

\begin{proof}
    From Lemma \ref{lemfin}, we have that $S(P^+) = \{1\}$ and hence $Y(P^+)=Y_1$, $J_1=\Tilde{\Delta}$.  Since $Y\subset Y_1 \: \forall\: Y\in \Omega$, we have from Proposition \ref{stab} that $\forall\: Y\in \Omega$
    \[
    [A^Y_h] * \delta_{P^+} = [A^{Y_1}_h] * \delta_{P^+}
    \]
    Let $\alpha \in \Tilde{\Delta}$ be such that $U_{\alpha} \subset P^+$, $J'=J\cup \{\alpha\}$  and denote $P_{\{\Tilde{\Delta}\backslash \alpha\}} \in Par$ by $P'$. 
    \begin{align*}
        [A^{Y_1}_h] &= \sum_{P \in Par} (-1)^{r(G)-r(P)}Av^{{(Y_1})_P}(h_P)\\
        &= Av^{(Y_1)_{P'}}(h_{P'}) +\sum_{J\subsetneq \Tilde{\Delta}\backslash \alpha} (-1)^{r(G)-|J|} \left(Av^{(Y_1)_{P_J}}(h_{P_J}) - Av^{(Y_1)_{P_{J'}}}(h_{P_{J'}}) \right)\\
        &= h_{P'} + \sum_{J\subsetneq \Tilde{\Delta}\backslash \alpha} (-1)^{r(G)-|J|} (h_{P_J}-h_{P_{J'}})
    \end{align*}
Let $x\in \X$ be such that $P= G_x$. Observe that $U_\alpha \subset P^+ \Rightarrow \alpha(x)>0$. So, $P \subset P'$ and $h_{P'} * \delta_{P^+}=h_P$. If we can prove that $h_{P_J} * \delta_{P^+} = h_{P_{J'}} * \delta_{P^+}$, we are done. Note that   $h_{P_J} = h_{P_{J'}} * \delta_{P_J^+}$ since $P_{J} \subset P_{J'}$, which gives us
\begin{align*}
    h_{P_J} * \delta_{P^+} - h_{P_{J'}} *\delta_{P^+} &= h_{P_{J'}} * \delta_{P_J^+}* \delta_{P^+}-h_{P_{J'}} *\delta_{P_{J'}^+} *\delta_{P^+}\\
    &=h_{P_{J'}} * (\delta_{P_J^+}* \delta_{P^+}- \delta_{P_{J'}^+} *\delta_{P^+})
\end{align*}
meaning that it is enough to show $\delta_{P_J^+}* \delta_{P^+} = \delta_{P_{J'}^+}* \delta_{P^+}$ to complete the proof of the proposition.
\begin{claim}\label{cl1eval}
    $\delta_{P_J^+} * \delta_{P^+} = \delta_{P_{J'}^+} * \delta_{P^+}$
\end{claim}
Let $x,\: y,\: z \in \X$ be such that $P_J =G_x,\: P_{J'} = G_y $ and $P=G_z$. To prove the claim it is enough to show that $P_J^+ \cdot P^+ = P_{J'}^+ \cdot P^+$. We already know that $P_J^+ \cdot P^+ \supset P_{J'}^+ \cdot P^+$. To show the reverse inclusion, let $\beta \in \tilde{\Phi} $ be such that $U_\beta \subset P_J^+ \backslash P_{J'}^+ $. Using arguments similar to claim \ref{cl1lemdel} in the proof of Lemma \ref{lemdel}, we can conclude that $\beta$ has the form $\beta = \sum_{\alpha_i \in J} n_i\alpha_i + n'\alpha $ where $ n_i\geq 0, \: n'>0$, since $\beta(x) >0$ and $\beta(y) \leq 0$. Now, since $U_\alpha \subset P^+= G_z^+$, we have that $\alpha(z) >0$, which means that $\beta(z) >0 \Rightarrow U_\beta \subset P^+$ and finishes the proof of the claim. Note that what we have essentially proved is that for $\beta \in \tilde{\Delta}$, $\beta(x) >0 $ implies that either $\beta(y) >0 $ or $\beta(z) > 0 $. \par 
 Using the above facts, we have  
    \begin{align*}
      [A^Y_h] * \delta_{P^+} &= [A^{Y_1}_h] * \delta_{P^+}  \\
      &= h_{P'} * \delta_{P^+} + \sum_{J\subsetneq \Tilde{\Delta}\backslash \alpha} (-1)^{r(G)-|J|} (h_{P_J}* \delta_{P^+}-h_{P_{J'}}* \delta_{P^+})\\
      &= h_{P'} * \delta_{P^+}=h_P
    \end{align*}
\end{proof}

\begin{proposition}\label{mainthm0}
    For each $h\in A^0 (G)$, we have $[A_h]\in \mathcal{Z}^0(G) \subset \mc Z(G) \simeq  \emph{End}_{\HH(G)^{2}}(\HH(G))$, and the assignment $h\mapsto [A_h]$ defines an algebra map 
        \begin{align*}
            [A^0]\: :\: A^0(G) &\longrightarrow \mc Z^0(G) \\
            h &\longmapsto [A_h]
        \end{align*}
\end{proposition}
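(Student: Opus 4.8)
The plan is to verify the two assertions of Proposition~\ref{mainthm0} in turn: first that each $[A_h]$ is a genuine element of the Bernstein center, and then that $h \mapsto [A_h]$ respects the algebra structure. For the first part, recall that under the identification $\mc Z(G) \simeq \mathrm{End}_{\HH(G)^2}(\HH(G))$ an endomorphism of $(l,\HH(G))$ lies in the center precisely when it commutes with both left and right convolution. By construction $[A_h](f) = \lim_{Y} [A^Y_h] * f$ is manifestly given by left convolution (in the limit), so it automatically commutes with right convolution; the content is that it also commutes with \emph{left} convolution, i.e.\ that $[A_h](g * f) = g * [A_h](f)$ for all $g, f \in \HH(G)$. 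Here I would use the stabilization from Theorem~\ref{thmstab0} together with Proposition~\ref{stab}: choose $n$ so large that $f$ is left $I_n^+$-invariant, so that $[A_h](f) = [A^{Y(I_n^+)}_h] * f$, and observe that the finite function $[A^{Y(I_n^+)}_h]$ does the job for all left translates of $f$ by elements of $G(\mathbbm k)$ since the congruence subgroup $I_n^+$ can be conjugated inside a smaller one; more precisely one reduces to showing $[A^Y_h]$ commutes with convolution by $\delta_K$ for small compact open $K$, which follows from the defining $P$-conjugation invariance of the $h_P$ and the $\Omega$-indexed averaging. The landing in the \emph{depth-zero} part $\mc Z^0(G)$ rather than all of $\mc Z(G)$ then follows because each $\M^0_P \subset \HH(G)$ consists of functions bi-invariant under $P^+$, hence killed by the depth-$>0$ idempotent, and this property is inherited by the limit $[A^Y_h]$.

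For the algebra map assertion, the cleanest route is to use the evaluation map $\Psi^0 : \mc Z^0(G) \to A^0(G)$ given by $z \mapsto \{z_{\HH}(\delta_{P^+})\}_{P \in Par}$ (this lands in $A^0(G)$ because $z$ commutes with the transition convolutions $\phi^0_{P,Q}$), and to show that $\Psi^0 \circ [A^0] = \mathrm{id}_{A^0(G)}$. This is exactly the content of Proposition~\ref{eval}: for $z = [A_h]$ we have $z_{\HH}(\delta_{P^+}) = [A_h](\delta_{P^+}) = \lim_Y [A^Y_h] * \delta_{P^+} = h_P$ by that proposition, so $\Psi^0([A_h]) = h$. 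Since $\Psi^0$ is an algebra homomorphism (the Bernstein center is commutative and $\Psi^0$ is clearly multiplicative and unital on the level of convolution idempotents — $z_{\HH}$ is an algebra endomorphism of $\HH(G)$ and the projection to each $\M^0_P$ is multiplicative), and since $[A^0]$ is a right inverse to an algebra map with values in the center, it suffices to know $[A^0]$ is additive and preserves the unit (both immediate from the explicit formula \eqref{AYh}, as averaging and convolution are linear) to conclude that $[A^0]$ is itself an algebra homomorphism: indeed $\Psi^0([A_h] \cdot [A_{h'}])$ and $\Psi^0([A_{h * h'}])$ must be compared, but one instead argues that $[A_h]\cdot[A_{h'}]$ and $[A_{h h'}]$ are both elements of the center with the same image under the injective-on-the-relevant-subspace map, or more directly checks multiplicativity of $[A_h](f) = [A_h] * f$ directly via the limit.

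Let me record the more self-contained alternative for multiplicativity, which I expect to be the actual argument: one shows directly that $[A_h] * [A_{h'}] = [A_{h h'}]$ as essentially compact invariant distributions, equivalently that $[A_h]\big([A_{h'}](f)\big) = [A_{h h'}](f)$ for all $f$. Fixing $f$ left $I_n^+$-invariant, both sides reduce to finite convolutions $[A^{Y(I_n^+)}_{\bullet}] * (\cdots) * f$, and one expands using Proposition~\ref{eval} in the form $[A^Y_h] * \delta_{P^+} = h_P$, noting that $[A_{h'}](f)$ is itself a finite sum of functions each right-invariant under some $P_r^+$, so that applying $[A_h]$ amounts to convolving with the $h_P$'s, which multiply componentwise as in $A^0(G)$. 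The bookkeeping is that the product in $A^0(G)$ is the componentwise convolution product $(h h')_P = h_P * h'_P$ in $\M^0_P$ (the $\M^0_P$ being subalgebras of $\HH(G)$), and this matches the composition of the corresponding central elements.

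The main obstacle I anticipate is the first half: carefully verifying that $[A_h]$ commutes with \emph{left} convolution, i.e.\ that the limit defining $[A_h]$ genuinely produces a two-sided object and not merely a right-$\HH(G)$-module endomorphism. The stabilization theorem gives us a well-defined operator, but one must check that for $g, f \in \HH(G)$ the equality $[A_h](g*f) = g*[A_h](f)$ holds, which requires relating the stabilized function $[A^{Y}_h]$ to its translates and using the $G(\mathbbm k)$-conjugation-invariance of each $h_P$ in an essential way — the averaging over $Y_P$-orbits is precisely what makes $[A^Y_h]$ ``almost invariant''. Once this is in place, landing in $\mc Z^0(G)$ and the algebra-map property are comparatively formal, following from Proposition~\ref{eval} and the injectivity/multiplicativity of the evaluation map $\Psi^0$.
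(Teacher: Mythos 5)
Your overall architecture matches the paper's: right-convolution equivariance is immediate from the definition, the real content is equivariance for the other action, the depth-zero landing is a separate formal step, and multiplicativity reduces via Proposition \ref{eval}. However, the central step --- that $[A_h]$ commutes with left convolution, equivalently (given right-translation equivariance) that $Ad_g\circ[A_h]=[A_h]\circ Ad_g$ for all $g\in G(\mathbbm k)$ --- is exactly the point you flag as ``the main obstacle'' and then do not actually prove. Your suggested reduction (``the finite function $[A^{Y(I_n^+)}_h]$ does the job for all left translates\dots since the congruence subgroup $I_n^+$ can be conjugated inside a smaller one'') does not close the gap: knowing $[A_h](g*f)=[A^{Y(I_m^+)}_h]*g*f$ for suitable $m$ is not the same as knowing it equals $g*[A^{Y(I_n^+)}_h]*f$. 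The mechanism the paper uses is concrete and you should supply it: for $p$ in a fixed parahoric $P_0$, enlarge the $I$-invariant index set $Y$ to a $P_0$-invariant set $Y^{P_0}\supset Y$ (possible because $P_0/I$ is finite), so that $Ad_p\bigl([A^{Y^{P_0}}_h]\bigr)=[A^{Y^{P_0}}_h]$ on the nose, using the $P$-conjugation invariance of each $h_P$; then pass from parahorics to all of $G(\mathbbm k)$ using that $G(\mathbbm k)$ is generated by its standard parahorics (this is where simple connectedness enters). Without this, neither the centrality of $[A_h]$ nor your multiplicativity argument (which invokes $Ad_y\circ[A_h]=[A_h]\circ Ad_y$ to move $[A_h]$ past the averaging $Av^{Y_P}$) is established.

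Two smaller remarks on where you diverge from the paper, both acceptable. For the landing in $\mc Z^0(G)$, the paper cites the fact that $[A_\delta]$, $\delta=\{\delta_{P^+}\}$, is the depth-zero projector and writes $[A_h]=[A_{h*\delta}]=[A_h]\circ[A_\delta]$; your route --- each $Ad_y(h_P)$ is bi-invariant under a conjugate of $P^+$ and hence annihilates every vector in a depth-$>0$ representation, so $[A^Y_h]$ and hence $[A_h]$ act by zero there --- is valid and more self-contained (it is the argument the paper itself uses for $[A_{\delta_r}]$ in positive depth), but it needs the pointwise stabilization $[A_h](v)=\lim_Y[A^Y_h](v)$, which you should justify as in Proposition \ref{stabrv}. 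For multiplicativity, your first route via $\Psi^0([A_h]\circ[A_{h'}])=\Psi^0([A_{h*h'}])$ requires the injectivity of $\Psi^0$ on $\mc Z^0(G)$ (Proposition \ref{inj}); that proposition is logically independent of the present one, so the argument is not circular, but note that $[A_h]\circ[A_{h'}]$ must first be known to lie in $\mc Z^0(G)$ (it does, since $\mc Z^0(G)$ is a direct factor) before injectivity applies. The paper instead proves $[A_{h*h'}]=[A_h]\circ[A_{h'}]$ directly, reducing to $[A_h](Av^{Y_P}(h'_P))=Av^{Y_P}(h_P*h'_P)$, which again rests on the conjugation-equivariance you have not yet established.
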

    \begin{proof}
     We already have that $[A_h]$ is well-defined from the first part, and it can be easily observed from the definition of $[A_h]$ that $[A_h](f*g)=[A_h](f)*g$. 
     \begin{claim}\label{gmod}
         Given $g\in G(\mathbbm k)$ and arbitrary $f\in \HH(G),\: h\in A^0(G)$, we have 
         \[
         Ad_g([A_h](f))=[A_h](Ad_g(f)) 
         \]
     \end{claim}
      We first show that for a fixed $P_0 \in Par $ and arbitrary $f\in \HH(G),\: h\in A^0(G)$, we have 
         \[
         Ad_p([A_h](f))=[A_h](Ad_p(f)) \: \text{ for }p\in P_0.
         \]
     Choose $Y\in \Omega $ large enough such that $[A_h](f)=[A^Y_h]*f$ and $[A_h](Ad_p(f))=[A^Y_h]*(Ad_p(f))$. Since $Y$ is $I$-invariant and $|P_0/I|$ is finite, $\exists \: Y^{P_0}\supset Y$ such that $Y^{P_0}$ is $P_0$-invariant. Then, $[A_h](f)= [A^{Y^{P_0}}_h]*f$ and $Ad_p([A^{Y^{P_0}}_h])=[A^{Y^{P_0}}_h]$ for $p\in P_0$ since $Y^{P_0}$ is $P_0$-invariant. So,
     \begin{align*}
         Ad_p([A_h](f))&=Ad_p([A^{Y^{P_0}}_h]*f)\\
         &=Ad_p([A^{Y^{P_0}}_h])*Ad_p(f)\\
         &=[A^{Y^{P_0}}_h]*Ad_p(f)\\
         &=[A_h](Ad_p(f)) \: \text{ since } Y^{P_0}\supset Y
     \end{align*}
     From the above, we see that $[A_h]$ is a $P$-module map (where $P$ acts by conjugation) for each $P\in Par$. Since $G(\mathbbm k)$ is generated by $\{P\:|\: P\in Par\}$, we can write $g\in G(\mathbbm k)$ as $g=\prod_{i=1}^k p_i$ where $p_i\in P_i$ for $P_i\in Par$. Then,
     \begin{align*}
       Ad_g([A_h](f))&=  Ad_{p_1}\circ \cdots \circ Ad_{p_k}\left([A_h](f)\right)\\
       &= [A_h]\left(Ad_{p_1}\circ \cdots \circ Ad_{p_k}(f)\right)\\
       &= [A_h](Ad_g(f))
     \end{align*}
which shows that $[A_h]$ is a $G(\mathbbm k)$-module map (where $G(\mathbbm k)$ acts by conjugation) and finishes the proof of the claim. \par 
Let $\mathcal R$ denote the action of $G(\mathbbm k)$ on $\HH(G)$ by right translation, i.e., $\mc R_g(f)(x)= f(xg)$ for $g, \: x\in G(\mathbbm k)$ and $f\in \HH(G)$. We can see from the definition of $[A_h]$ that $\mc R_g([A_h](f))=[A_h](\mc R_g(f))$. So, for the $G(\mathbbm k)^2$ action on $\HH(G)$ defined by $((g,h)f)(x)=f(g^{-1}xh)$, we have that $[A_h]$ is a $G(\mathbbm k)^2$ module map $\HH(G) \longrightarrow \HH(G)$, and hence a $\HH(G)^2$-module map for the $\HH(G)^2$-module structure on $\HH(G)$ induced from the same $G(\mathbbm k)^2$ action on $\HH(G)$. This shows that $[A_h]\in  \text{End}_{\HH(G)^{2}}(\HH(G)) \simeq \mc Z(G)$.\par
The next step is showing that the map $[A^0] : A^0(G) \longrightarrow \mc Z(G)$ is an algebra map, and then we will finally show that the image lies in the depth-zero part. Note that there is a natural convolution product defined on $A^0(G)$ which gives $A^0(G)$ its algebra structure. For $h=\{h_P\}$ and $\:h'=\{{h'}_P\}\in A^0(G)$, $h*h'=\{h_P* {h'}_P\}$. 
\begin{claim}\label{alg}
 Given $h,\: h'\in A^0(G)$, we have $[A_{h*h'}]=[A_h]\circ [A_{h'}]$   
\end{claim}
We will show $[A_{h*h'}]=[A_h]\circ [A_{h'}]$ via a series of reductions. First observe that in order to prove the claim, it is enough to show that for $Y\in \Omega $
    \begin{equation}\label{alg1}
        [A_h](Av^{Y_P}({h'}_P))= Av^{Y_P}(h_P * {h'}_P) \:\: \forall P\in Par
    \end{equation}
This is because given $f \in \HH (G)$, we can choose $Y \in \Omega $ large enough such that $[A_{h'}](f)=[A^Y_{h'}]*f$ and $[A_{h*h'}](f)=[A^Y_{h*h'}]*f $. Then, if \eqref{alg1} is true, we have 
\begin{align*}
    [A_{h*h'}](f)&=[A^Y_{h*h'}]*f\\
    &= \sum_{P\in Par}(-1)^{r(G)-r(P)}\left( Av^{Y_P}(h_P*h_{P'})* f\right)\\
    &= \sum_{P\in Par}(-1)^{r(G)-r(P)} \left( [A_h](Av^{Y_P}({h'}_P)) *f \right)\\
    &= \sum_{P\in Par}(-1)^{r(G)-r(P)} [A_h]\left( Av^{Y_P}({h'}_P) *f \right)\\
    &= [A_h]\left(\sum_{P\in Par}(-1)^{r(G)-r(P)}  Av^{Y_P}({h'}_P) *f \right)\\
    &= [A_h]\left([A^Y_{h'}]*f\right)\\
    &= [A_h]\left([A_{h'}](f)\right)   
\end{align*}
So, we have reduced the proof of the claim to the proof of \eqref{alg1}. Further, observe that in order to show that \eqref{alg1} is true, it is enough to show that 
\begin{equation}\label{alg2}
    Ad_y\circ [A_h] = [A_h]\circ Ad_y
\end{equation}
for $y\in G(\mathbbm k)$. This is because if \eqref{alg2}, is true, we have 
\begin{align*}
    [A_h](Av^{Y_P}({h'}_P))&= Av^{Y_P}([A_h]({h'}_P))\\
    &= Av^{Y_P}([A_h](\delta_{P^+} * {h'}_P))\\
    &= Av^{Y_P}([A_h](\delta_{P^+}) * {h'}_P)\\
    &= Av^{Y_P}(h_P * {h'}_P)
\end{align*}
which shows that \eqref{alg1} is true, and we are done. However, we know that \eqref{alg2} is true from Claim \ref{gmod} which finishes the proof of the claim. \par
We are only left to show that the image of $[A^0]\: :\: A^0(G) \rightarrow \mc Z(G)$ lies in the depth-zero part, i.e., $[A_h] \in \mc Z^0(G)$. Let $\delta := \{ \delta_{P^+}\}_{P\in Par} \in A^0(G)$. As per Theorem 4.4.1 in \cite{BKV}, we know that the element $[A_{\delta}]$ is the projector to the depth-zero part of the Bernstein center. Since $h*\delta =h$ for $h \in A^0(G)$, we have $[A_h]= [A_{h*\delta}]= [A_h]\circ [A_\delta]$, and hence $[A_h] \in \mc Z^0(G)$.
\end{proof}

\begin{theorem}\label{main thm depth 0}
    The map $[A^0]: A^0(G) \rightarrow \ZZ^0(G) $ defined in Proposition \ref{mainthm0} is an algebra isomorphism onto the depth-zero Bernstein center.
\end{theorem}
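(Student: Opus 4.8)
The plan is to exhibit an explicit two-sided inverse to $[A^0]$, namely the evaluation map $\Psi^0\colon \ZZ^0(G)\to A^0(G)$ sending $z$ to the system $\{z_\HH(\delta_{P^+})\}_{P\in Par}$, where $z_\HH\in\mathrm{End}_{\HH(G)^2}(\HH(G))$ is the endomorphism of the Hecke algebra attached to $z$. The first task is to check that $\Psi^0$ is well defined. The function $z_\HH(\delta_{P^+})$ is compactly supported; it is $P^+$-bi-invariant and $P$-conjugation invariant because $z_\HH$ commutes with left and right translations, hence with conjugation, while $\delta_{P^+}$ has these invariances, so $z_\HH(\delta_{P^+})\in\M^0_P$. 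Compatibility with the transition maps is the identity $\phi^0_{P,Q}(z_\HH(\delta_{Q^+}))=z_\HH(\delta_{Q^+})*\delta_{P^+}=z_\HH(\delta_{Q^+}*\delta_{P^+})=z_\HH(\delta_{P^+})$ for $P\subseteq Q$, which uses that $z_\HH$ commutes with right convolution and that $\delta_{Q^+}*\delta_{P^+}=\delta_{P^+}$ since $Q^+\subseteq P^+$. Because $[A^0]$ is already known to be an algebra homomorphism by Proposition \ref{mainthm0}, it then suffices to prove that $\Psi^0$ and $[A^0]$ are mutually inverse as maps of sets.

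One composite is immediate from the evaluation statement already established: for $h=\{h_P\}\in A^0(G)$, Proposition \ref{eval} gives $[A_h](\delta_{P^+})=\lim_{Y\in\Omega}[A^Y_h]*\delta_{P^+}=h_P$, so $\Psi^0([A_h])=h$, i.e.\ $\Psi^0\circ[A^0]=\mathrm{id}_{A^0(G)}$. In particular $[A^0]$ is injective.

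For the other composite, fix $z\in\ZZ^0(G)$, set $h=\Psi^0(z)$ so that $h_P=z_\HH(\delta_{P^+})$, and let $\delta=\{\delta_{P^+}\}_{P\in Par}$. I would show $[A_h]=z_\HH$ by evaluating at an arbitrary $f\in\HH(G)$. By Theorem \ref{thmstab0} one can choose a single $Y_0\in\Omega$ large enough that both $[A_h](f)=[A^{Y_0}_h]*f$ and $[A_\delta](f)=[A^{Y_0}_\delta]*f$. Since $z_\HH$ commutes with conjugation, $Ad_y(h_P)=Ad_y(z_\HH(\delta_{P^+}))=z_\HH(Ad_y(\delta_{P^+}))$ for every representative $y$, and summing over the $y$ appearing in $Av^{(Y_0)_P}$ and then over $P\in Par$ with the signs $(-1)^{r(G)-r(P)}$ yields $[A^{Y_0}_h]=z_\HH\bigl([A^{Y_0}_\delta]\bigr)$. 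Using that $z_\HH$ commutes with right convolution, $[A_h](f)=z_\HH\bigl([A^{Y_0}_\delta]\bigr)*f=z_\HH\bigl([A^{Y_0}_\delta]*f\bigr)=z_\HH\bigl([A_\delta](f)\bigr)$. Finally, since $[A_\delta]$ is the depth-zero Bernstein projector (Theorem 4.4.1 of \cite{BKV}) and $z\in\ZZ^0(G)$, we have $z_\HH\circ[A_\delta]=z_\HH$, so $[A_h](f)=z_\HH(f)$ for all $f$; hence $[A^0](\Psi^0(z))=z$, i.e.\ $[A^0]\circ\Psi^0=\mathrm{id}_{\ZZ^0(G)}$.

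Combining the two composites, $[A^0]$ is a bijective algebra homomorphism, hence an algebra isomorphism $A^0(G)\xrightarrow{\sim}\ZZ^0(G)$ with inverse $\Psi^0$. I expect the main obstacle to be the final step of the previous paragraph: passing from $[A_h](f)=z_\HH([A_\delta](f))$ back to $z_\HH(f)$ is precisely where one must use that $[A_\delta]$ realizes the central idempotent cutting out the depth-zero block of $R(G)$, i.e.\ the Bezrukavnikov--Kazhdan--Varshavsky Bernstein-projector theorem; and one must take care that the level $Y_0$ can be chosen uniformly for $f$ and for $\delta$ at once, which is what makes the telescoping identity $[A^{Y_0}_h]=z_\HH([A^{Y_0}_\delta])$ legitimate. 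The remaining ingredients are routine: the equivariance properties of $z_\HH$ recalled in Section \ref{depth zero center}, the stabilization Theorem \ref{thmstab0}, and the evaluation Proposition \ref{eval}.
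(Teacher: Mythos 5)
Your proof is correct. The first half --- the construction of $\Psi^0$, its well-definedness, and the verification $\Psi^0\circ[A^0]=\mathrm{Id}_{A^0(G)}$ via Proposition \ref{eval} --- coincides with the paper's Proposition \ref{sect}. Where you genuinely diverge is in the second half. The paper closes the argument by proving that $\Psi^0$ is \emph{injective} (Proposition \ref{inj}): given $\Psi^0(z)=\Psi^0(z')$, one tests $z$ and $z'$ on an arbitrary depth-zero irreducible $(\pi,V)$, picks $0\neq v\in V^{P^+}$, writes $z_V(v)=z_\HH(\delta_{P^+})(v)$, and concludes $z=z'$ from the injectivity of $\ZZ(G)\to Fun(Irr(G),\C)$; bijectivity of $[A^0]$ then follows formally from the one composite already computed. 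You instead verify the other composite $[A^0]\circ\Psi^0=\mathrm{Id}_{\ZZ^0(G)}$ directly inside $\mathrm{End}_{\HH(G)^{2}}(\HH(G))$, via the telescoping identity $[A^{Y_0}_{\Psi^0(z)}]=z_\HH\bigl([A^{Y_0}_\delta]\bigr)$ --- legitimate because $z_\HH$ is linear and commutes with $Ad_y$, and because $\Omega$ is directed so a common stabilization level $Y_0$ for $h$ and $\delta$ exists --- followed by $z_\HH\circ[A_\delta]=z_\HH$ for $z\in\ZZ^0(G)$. Both routes are valid. The paper's is representation-theoretic and short, but obtains surjectivity of $[A^0]$ only indirectly; yours is computational and internal to the Hecke algebra, and produces the explicit identity $[A_{\Psi^0(z)}]=z$ rather than merely its existence. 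The price is a heavier reliance on the identification of $[A_\delta]$ with the depth-zero Bernstein projector of \cite{BKV}, but the paper already invokes exactly that fact in Proposition \ref{mainthm0} to show $\mathrm{Im}([A^0])\subset\ZZ^0(G)$, so no new external input is actually required.
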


The  theorem follows directly from the following propositions \ref{sect} and \ref{inj}. 

\begin{proposition}\label{sect}
    We have an algebra map 
    \begin{align*}
       \Psi^0 \::\: \mc Z^0(G) \longrightarrow A^0(G)
    \end{align*}
    such that $\Psi^0 \circ [A^0] = Id_{A^0(G)}$
\end{proposition}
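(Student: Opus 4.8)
The plan is to construct $\Psi^0$ as the natural evaluation map and then verify the section identity by a direct computation using Proposition \ref{eval}. The key point is that an element $z \in \mathcal Z^0(G)$, viewed (via the isomorphism $\mathcal Z(G) \xrightarrow{\sim} \mathrm{End}_{\HH(G)^2}(\HH(G))$) as an $\HH(G)^2$-module endomorphism $z_{\HH}$ of $\HH(G)$, should be recoverable from its values on the idempotents $\delta_{P^+}$, $P \in Par$. Concretely, I would define $\Psi^0(z) := \{ z_{\HH}(\delta_{P^+}) \}_{P \in Par}$. The first thing to check is that this tuple actually lands in $A^0(G)$: for each $P$, the function $z_{\HH}(\delta_{P^+})$ must lie in $\M^0_P = C^\infty_c(\frac{G(\mathbbm k)/P^+}{P})$, and the tuple must be compatible with the transition maps $\phi^0_{P,Q}$.

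For membership in $\M^0_P$: since $z_{\HH}$ commutes with right convolution and $\delta_{P^+} * \delta_{P^+} = \delta_{P^+}$, we get $z_{\HH}(\delta_{P^+}) = z_{\HH}(\delta_{P^+}) * \delta_{P^+}$, giving right $P^+$-invariance; similarly commuting with left convolution gives left $P^+$-invariance. For $P$-conjugation invariance, one uses that $z_{\HH}$ commutes with the $G(\mathbbm k)$-conjugation action (this is built into the description $z_{\HH} \in \mathrm{End}_{\HH(G)^2}(\HH(G))$, since conjugation by $p \in P$ is $\mathrm{Ad}_p(f) = \delta_p * f * \delta_{p^{-1}}$, i.e. an $\HH(G)^2$-action) together with $\mathrm{Ad}_p(\delta_{P^+}) = \delta_{P^+}$ for $p \in P$. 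Compact support follows since $z_{\HH}$ maps $\HH(G)$ to $\HH(G)$. For compatibility with $\phi^0_{P,Q}$ when $P \subseteq Q$: we need $z_{\HH}(\delta_{P^+}) = z_{\HH}(\delta_{Q^+}) * \delta_{P^+}$; but $\delta_{Q^+} * \delta_{P^+} = \delta_{P^+}$ (since $Q^+ \supseteq$ ... more precisely $\delta_{Q^+} * \delta_{P^+} = \delta_{P^+}$ as $Q^+ \cdot P^+ = P^+$ need not hold — rather one uses $P^+ \subseteq$ ... here I would instead invoke that $\delta_{P^+}$ is the unit of $\M^0_P$ and $\delta_{Q^+} * \delta_{P^+} = \delta_{P^+}$ because $P \subseteq Q$ implies $P^+ \supseteq Q^+$, so $\delta_{Q^+}*\delta_{P^+}=\delta_{P^+}$), and then apply $z_{\HH}$ using that it commutes with left convolution by $\delta_{Q^+}$. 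That $\Psi^0$ is an algebra map follows because $z \mapsto z_{\HH}$ is an algebra map and $(z z')_{\HH}(\delta_{P^+}) = z_{\HH}(z'_{\HH}(\delta_{P^+})) = z_{\HH}(z'_{\HH}(\delta_{P^+}) * \delta_{P^+}) = z_{\HH}(z'_{\HH}(\delta_{P^+})) * z_{\HH}(\delta_{P^+})$... here I'd be slightly more careful and instead note $z'_{\HH}(\delta_{P^+}) \in \M^0_P$ so $z'_{\HH}(\delta_{P^+}) = z'_{\HH}(\delta_{P^+}) * \delta_{P^+}$, hence $z_{\HH}(z'_{\HH}(\delta_{P^+})) = z'_{\HH}(\delta_{P^+}) * z_{\HH}(\delta_{P^+})$, which is exactly the product in $A^0(G)$ of the two tuples, in the right order.

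Finally, for $\Psi^0 \circ [A^0] = \mathrm{Id}_{A^0(G)}$: given $h = \{h_P\} \in A^0(G)$, we must show $([A_h])_{\HH}(\delta_{P^+}) = h_P$ for all $P$. But by definition $([A_h])_{\HH}(\delta_{P^+}) = [A_h](\delta_{P^+}) = \lim_{Y} [A^Y_h] * \delta_{P^+}$, and Proposition \ref{eval} states precisely that $[A^Y_h] * \delta_{P^+} = h_P$ for every $Y \in \Omega$. So the limit is $h_P$ and we are done. I expect the main obstacle to be the bookkeeping in the well-definedness check — verifying the transition-map compatibility and $P$-conjugation invariance cleanly, in particular making sure the various identities among the idempotents $\delta_{P^+}$, $\delta_{Q^+}$ (which direction of inclusion gives which convolution identity) are applied correctly — rather than anything conceptually deep, since the section identity itself is an immediate consequence of Proposition \ref{eval}.
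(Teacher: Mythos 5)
Your proposal is correct and follows essentially the same route as the paper: define $\Psi^0(z)=\{z_\HH(\delta_{P^+})\}_{P\in Par}$, check it lands in $A^0(G)$ using that $z_\HH$ is an $\HH(G)^2$-module map and that $P^+$ is normal in $P$ (together with $\delta_{Q^+}*\delta_{P^+}=\delta_{P^+}$ for $P\subseteq Q$), and deduce the section identity directly from Proposition \ref{eval}. The only cosmetic differences are that you spell out the well-definedness checks the paper dismisses as easy, and your algebra-map computation produces the factors in the opposite order (harmless, since $\mathcal Z(G)$ is commutative); also note the step $z_\HH(\delta_{Q^+}*\delta_{P^+})=z_\HH(\delta_{Q^+})*\delta_{P^+}$ uses commutation with \emph{right} convolution, not left, though $z_\HH$ commutes with both.
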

\begin{proof}
Given $z \in \mc Z(G)$, let $z_\HH$ be the image of $z$ under the following algebra isomorphism
\begin{align*}
    \ZZ(G) &\xlongrightarrow{\sim} \text{End}_{\HH(G)^{2}}(\HH(G))\\
    z & \longmapsto z_\HH
\end{align*}
    For $P\in Par $, define 
    \begin{align*}
        \Psi^0_P \::\: \mc Z^0(G) &\longrightarrow \M^0_P\\
        z &\longmapsto z_\HH(\delta_{P^+})
    \end{align*}
    We can easily see that the map $\Psi^0_P$ is well-defined since $P^+$ is normal in $P$. Let $\pi^0_P:A^0(G) \rightarrow \M^0_P $ be the canonical projection map. For $P,\:Q \in Par $ such that $P\subset Q $, we have $\Psi^0_P= \phi^0_{P,Q} \circ \Psi^0_Q $. So, there exists a map $\Psi^0 := \lim_{P\in Par}\Psi^0_P$ 
    \[
    \Psi^0 : \ZZ^0(G) \longrightarrow A^0(G)
    \]
    such that $\pi^0_P \circ \Psi^0 = \Psi^0_P$ for $P \in Par$. 
    Further, note that each $\Psi^0_P$ is an algebra map. Given $z,z' \in \ZZ^0(G)$,
    \begin{align*}
        (z\circ z')_\HH(\delta_{P^+}) &= z_\HH \circ {z'}_\HH(\delta_{P^+})=z_\HH({z'}_\HH(\delta_{P^+}*\delta_{P^+}))\\
        &= z_\HH(\delta_{P^+}*{z'}_\HH(\delta_{P^+})) =z_\HH(\delta_{P^+})*{z'}_\HH(\delta_{P^+})
    \end{align*}
    Hence, $\Psi^0 : \ZZ^0(G) \rightarrow A^0(G)$ is an algebra map. Finally, note that for $h=\{h_P\}_{P\in Par} \in A^0(G)$, we have from Proposition \ref{eval} that  $\Psi^0_P([A_h])= [A_h](\delta_{P^+})= h_P$ $\forall \: P \in Par$. Hence, 
    \[
    \Psi^0 \circ [A^0] (h) = \Psi^0([A_h])=h
    \]
    for $h=\{h_P\}_{P\in Par} \in A^0(G)$, and we are done.
\end{proof}
The above proposition implies that $\Psi^0$ is surjective and $[A^0]$ is injective as algebra maps. Observe that if we can show injectivity of $\Psi^0$, we can conclude that $\Psi^0$ and $[A^0]$ are inverse algebra isomorphisms.
\begin{proposition}\label{inj}
    $\Psi^0$ defined in the previous proposition is injective. 
\end{proposition}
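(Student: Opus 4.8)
The plan is to show $\Psi^0$ is injective by unwinding what it means for $\Psi^0(z)=0$, namely that $z_{\HH}(\delta_{P^+})=0$ for every $P\in Par$, and then leveraging the fact that $z$ lives in the \emph{depth-zero} part to propagate this vanishing to all of $\HH(G)$. The key point is that for $z\in\ZZ^0(G)$ the endomorphism $z_{\HH}$ factors through the depth-zero projection; concretely, writing $\delta=\{\delta_{P^+}\}_{P\in Par}\in A^0(G)$, Theorem~4.4.1 of \cite{BKV} identifies $z^0:=[A_\delta]$ with the depth-zero Bernstein projector, so $z_{\HH}=z_{\HH}\circ (z^0)_{\HH}$, and $(z^0)_{\HH}(f)=\lim_{Y\in\Omega}[A^Y_\delta]*f$. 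Thus it suffices to show that $z_{\HH}$ annihilates the image of $(z^0)_{\HH}$, i.e. that $z_{\HH}\big([A^Y_\delta]*f\big)=0$ for all $f\in\HH(G)$ and all large $Y$.

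First I would reduce to showing $z_{\HH}\big(Av^{Y_P}(\delta_{P^+})\big)=0$ for each $P\in Par$ and each $Y\in\Omega$, since $[A^Y_\delta]$ is by \eqref{AYh} an alternating sum of such terms. Here the crucial input is that $z_{\HH}$ commutes with the conjugation action of $G(\mathbbm k)$ — this is exactly the statement, established in the course of proving that $\ZZ(G)\simeq \mathrm{End}_{\HH(G)^2}(\HH(G))$, that $z_{\HH}$ is a $G(\mathbbm k)^2$-module map, in particular an $\mathrm{Ad}$-equivariant map. Therefore
\begin{align*}
z_{\HH}\big(Av^{Y_P}(\delta_{P^+})\big)
&= z_{\HH}\Big(\sum_{y\in Y_P}\mathrm{Ad}_y(\delta_{P^+})\Big)
= \sum_{y\in Y_P}\mathrm{Ad}_y\big(z_{\HH}(\delta_{P^+})\big)
= \sum_{y\in Y_P}\mathrm{Ad}_y(0)=0,
\end{align*}
using the hypothesis $\Psi^0_P(z)=z_{\HH}(\delta_{P^+})=0$. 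Summing over $P$ with signs gives $[A^Y_\delta]*$ applied after $z_{\HH}$, but one must be slightly careful about the order: since $z_{\HH}$ commutes with right convolution ($z_{\HH}(g*f)=g*z_{\HH}(f)$ as $z_{\HH}$ is a left $\HH(G)$-module map), and $Av^{Y_P}(h_P)*f$ can be rewritten appropriately, one gets $z_{\HH}\big([A^Y_\delta]*f\big)=[A^Y_\delta]*z_{\HH}(f)$ by pulling $z_{\HH}$ through; alternatively, and more cleanly, apply $z_{\HH}$ on the outside and use that the depth-zero projector formula gives $(z^0)_{\HH}\circ z_{\HH}=z_{\HH}$ together with $z_{\HH}(\delta_{P^+})=0$ to collapse everything. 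Either way the conclusion is $z_{\HH}=0$ on all of $\HH(G)$, hence $z=0$ under the isomorphism $\ZZ(G)\simeq\mathrm{End}_{\HH(G)^2}(\HH(G))$.

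The main obstacle I anticipate is bookkeeping around which side the convolution sits on and making sure the limit over $Y\in\Omega$ is handled legitimately: one should fix $f$, choose $Y$ large enough (depending on the level of left-invariance of $f$, via Lemma~\ref{lemfin} and Proposition~\ref{stab}) so that $(z^0)_{\HH}(f)=[A^Y_\delta]*f$ exactly rather than merely in the limit, and only then apply the finite, term-by-term $\mathrm{Ad}$-equivariance argument above. A secondary subtlety is confirming that $z_{\HH}(\delta_{P^+})\in\M^0_P$ and that the identity $z_{\HH}=z_{\HH}\circ(z^0)_{\HH}$ really follows from $z\in\ZZ^0(G)$ and $z^0$ being the depth-zero projector — but this is immediate from $z=z\circ z^0$ in $\ZZ(G)$ and functoriality of $z\mapsto z_{\HH}$. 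Once these points are in place, injectivity of $\Psi^0$ follows, and combined with Proposition~\ref{sect} we conclude that $\Psi^0$ and $[A^0]$ are mutually inverse algebra isomorphisms, proving Theorem~\ref{main thm depth 0}.
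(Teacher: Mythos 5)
Your proof is correct, but it takes a genuinely different route from the paper's. The paper argues representation-theoretically: for $z,z'\in\ZZ^0(G)$ with $\Psi^0(z)=\Psi^0(z')$ and any depth-zero irreducible $(\pi,V)$, it picks $0\neq v\in V^{P^+}$, writes $z_V(v)=z_\HH(\delta_{P^+})(v)$, and invokes Schur's lemma together with the injectivity of $z\mapsto f_z$ to conclude $z=z'$ (depth $>0$ irreducibles being handled automatically since both elements lie in $\ZZ^0(G)$). You instead stay entirely inside the Hecke-algebra model: from $z=z\circ[A_\delta]$ you get $z_\HH(f)=z_\HH([A^Y_\delta]*f)=z_\HH([A^Y_\delta])*f$ for $Y$ large, and $\mathrm{Ad}$-equivariance of $z_\HH$ kills $z_\HH([A^Y_\delta])$ term by term once $z_\HH(\delta_{P^+})=0$ for all $P$. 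Both arguments are sound. The paper's is shorter and needs no input beyond Schur's lemma and the basic dictionary for $\ZZ(G)$; yours re-uses the stabilization machinery and the identification of $[A_\delta]$ with the depth-zero projector (already invoked in Proposition \ref{mainthm0}, so not a new hypothesis), but in exchange it is purely internal to $\mathrm{End}_{\HH(G)^2}(\HH(G))$. In fact your computation, run for arbitrary $z\in\ZZ^0(G)$ rather than $z$ in the kernel, yields $z_\HH([A^Y_\delta])=[A^Y_{\Psi^0(z)}]$ and hence the stronger identity $[A^0]\circ\Psi^0=\mathrm{Id}_{\ZZ^0(G)}$, which gives injectivity of $\Psi^0$ and surjectivity of $[A^0]$ simultaneously; this is a worthwhile strengthening. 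Your two flagged subtleties (which side the convolution acts on, and replacing the limit by an exact equality $[A_\delta](f)=[A^Y_\delta]*f$ for $Y\supset Y(I_n^+)$) are resolved exactly as you indicate, via $z_\HH(a*b)=z_\HH(a)*b$ and Proposition \ref{stab}.
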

\begin{proof}
    Assume $\Psi^0 (z) = \Psi^0 (z')$ for $z,\:z' \in \ZZ^0(G)$ and let $(\pi,\; V)$ be a smooth irreducible $G(\mathbbm k)$ representation of depth zero. $\exists \:P \in Par $ such that $V^{P^+}\ni v \neq \{0\}$. Then $\delta_{P^+}(v)=v$. In order to show $z=z'$, it is enough to show $z_V(v) = {z'}_V(v)$, since by Schur's lemma $z|_V = f_z(\pi)Id_V$ for some $f_z \in Fun(\:Irr (G) , \; \C)$. Note that $z_V(v)=z_V(\delta_{P^+}(v))=z_\HH(\delta_{P^+})(v)$ and the same is true for $z'$. Since $\Psi^0 (z) = \Psi^0 (z')$, we have 
    \[
    \pi^0_P \circ \Psi^0 (z) = \pi^0_P \circ \Psi^0(z') \Rightarrow \Psi^0_P(z)=\Psi^0_P(z')\Rightarrow z_\HH(\delta_{P^+})= {z'}_\HH(\delta_{P^+})
    \]
    $\forall \; P \in Par$. Hence,
    \[
    z_V(v) = z_\HH(\delta_{P^+})(v)= {z'}_\HH(\delta_{P^+})(v)= {z'}_V(v)
    \]
    which proves injectivity of $\Psi^0$. 
\end{proof}

\section{A description of the positive integral-depth
Bernstein center}\label{description depthr}

\subsection{Stabilization in the positive integral depth case}
Our setting remains the same as in the previous section. In the subsequent parts of this section, we fix $r$ to be a positive integer. Define 
$$\mathcal{M}^r_{P} := C^{\infty}_c(\frac{G(\mathbbm k)/P_r^+}{P}) $$
to be the algebra of compactly supported smooth functions on $G(\mathbbm k)$ which are $P_r^+$ bi-invariant and $P$ conjugation invariant.
For any $P\subset Q \in Par$ , we have a map 
\begin{align*}
\phi^r_{P,Q}:\M^r_{Q} &\longrightarrow \M^r_{P}\\
h &\longmapsto h*\delta_{P_r^+}  
\end{align*}
With the above defined maps, we have an inverse system $\{\M^r_{P}\}_{P \in Par}$ and we define $A^r(G)$ to be the inverse limit of the algebras $\M^r_{P}$.
$$ A^r(G) := \lim_{P\in Par} \M^r_{P}$$
For $h=\{h_P\}_{P\in Par}\in A^r(G)$ and $Y\in \Omega$, we have the exact same definitions for $Av^{Y_P}(h_P)$ and $[A^Y_h]$.

We have the following generalization of  Theorem \ref{thmstab0} to positive depth.

\begin{theorem}\label{thm1r}
    For every $f\in \mc H(G)$ and $h\in A^r(G)$, the sequence $\{[A^Y_h] * f\}_{Y \in \Omega} $ stabilizes, and hence $\lim_{Y\in \Omega} \:[A^Y_h] * f $ is well-defined.
\end{theorem}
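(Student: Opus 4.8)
The plan is to reproduce the proof of Theorem~\ref{thmstab0} essentially verbatim, with the pro-unipotent radicals $P^+$ replaced throughout by the congruence subgroups $P_r^+$ and $\M^0_P$ replaced by $\M^r_P$. The combinatorial data $\Omega$, $Y_P$, $Av^{Y_P}(-)$, $[A^Y_h]$ and the sets $S(Q_n^+)$, $Y(Q_n^+)$ do not involve the depth, so Lemma~\ref{lemfin} applies unchanged; what has to be re-established are the depth-$r$ analogues of Lemma~\ref{lemdel}, Lemma~\ref{lemh} and Proposition~\ref{stab}. Granting these, the proof of Theorem~\ref{thm1r} is identical to that of Theorem~\ref{thmstab0}: for $f\in\mc H(G)$ pick $n$ with $f=\delta_{I_n^+}*f$, apply the depth-$r$ form of Proposition~\ref{stab} with $Q=I$ to get $[A^Y_h]*\delta_{I_n^+}=[A^{Y(I_n^+)}_h]*\delta_{I_n^+}$ for all $Y\supseteq Y(I_n^+)$, and conclude by the finiteness of $S(I_n^+)$, hence of $Y(I_n^+)$, from Lemma~\ref{lemfin}(a).

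The only genuinely new point is the depth-$r$ analogue of Claim~\ref{cl1lemdel} inside Lemma~\ref{lemdel}:
\[
(P_J)_r^+=(P_{J'})_r^+\cdot\bigl((P_J)_r^+\cap w^{-1}Q_n^+w\bigr),
\]
to which the depth-$r$ Lemma~\ref{lemdel} reduces exactly as in the cited proof; the analogues of Claims~\ref{cl2lemdel} and~\ref{cl3lemdel} are purely group-theoretic and carry over word for word, using that $P_J\supseteq I$ normalizes $(P_J)_r^+$ and $I_w\subseteq I\subseteq Q$ normalizes $Q_n^+$. As in the depth-zero proof it suffices to show that every affine root $\beta$ with $U_\beta\subseteq (P_J)_r^+\setminus(P_{J'})_r^+$ satisfies $U_{w(\beta)}=wU_\beta w^{-1}\subseteq Q_n^+$. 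Writing $P_J=G_y$, $P_{J'}=G_{y'}$ with $y'$ in the closure of the facet of $y$, and $Q=G_x$, one has $U_\beta\subseteq(P_J)_r^+$ iff $\beta(y)>r$; and since $r\in\Z$ the affine functional $\beta-r$ is again an affine root, hence has constant sign on each facet of the standard alcove. Therefore $\beta(y)>r$ together with $\beta(y')\le r$ forces $\beta(y')=r$, so $\beta-r$ vanishes on the facet of $y'$ and thus belongs to the finite root subsystem $\Phi_{J'}$ generated by $\{\alpha_i : i\in J'\}$ (the roots of the reductive quotient of $P_{J'}$). Writing $\beta=r+\sum_{i\in J'}m_i\alpha_i$ and evaluating at $y$, where $\alpha_i(y)=0$ for $i\in J$ and $\alpha(y)>0$, shows that $\beta-r$ is a \emph{positive} root of $\Phi_{J'}$ with $\alpha$-coefficient $m_\alpha\ge 1$. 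Since $U_{w(\alpha)}\subseteq Q_n^+$ gives $w(\alpha)(x)>n$ and $w(\alpha_i)(x)\ge 0$ for $i\in J\subseteq J_w$, we get $w(\beta)(x)=r+\sum_{i\in J'}m_i\,w(\alpha_i)(x)\ge r+m_\alpha\,w(\alpha)(x)>n$, i.e.\ $U_{w(\beta)}\subseteq Q_n^+$ as required. (For $r=0$ this recovers the original argument, with $\beta$ a positive root of $\Phi_{J'}$; the same ``constant sign on facets'' observation also gives $(P_{J'})_r^+\subseteq(P_J)_r^+$, the analogue of $P_{J'}^+\subseteq P_J^+$ needed for the transition maps.) The depth-$r$ analogue of Lemma~\ref{lemh} then follows as in the cited proof, via $h_{P_{J'}}*\delta_{w^{-1}Q_n^+w}=h_{P_{J'}}*\delta_{(P_{J'})_r^+}*\delta_{w^{-1}Q_n^+w}=h_{P_{J'}}*\delta_{(P_J)_r^+}*\delta_{w^{-1}Q_n^+w}=h_{P_J}*\delta_{w^{-1}Q_n^+w}$ --- using the depth-$r$ Claim~\ref{cl3lemdel} and $h_{P_{J'}}*\delta_{(P_J)_r^+}=h_{P_J}$ (since $h\in A^r(G)$) --- and the depth-$r$ Proposition~\ref{stab} follows by the same induction on the number of $I$-orbits in $Y\setminus Y(Q_n^+)$, the vanishing $[A^w_h]*\delta_{Q_n^+}=0$ being the same telescoping sum.

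The main obstacle is this combinatorial step. For $r>0$ an affine root $\beta$ with $U_\beta\subseteq(P_J)_r^+$ no longer has ``small support'': expanded in the affine simple roots, all of its coefficients are positive, so the depth-zero argument that isolates the $\alpha$-coefficient of $\beta$ does not apply directly, and one must instead pass to the $\Phi_{J'}$-part $\beta-r$. This is precisely where integrality of $r$ enters --- it makes $\beta-r$ an affine root, so that $\Phi_{J'}$ is available --- and the failure of this device for rational $r$ is the reason that case requires the separate input of graded Lie algebras indicated in the introduction.
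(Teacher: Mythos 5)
Your proposal is correct and follows the paper's proof essentially verbatim: the paper likewise reduces to depth-$r$ analogues of Lemma~\ref{lemdel}, Lemma~\ref{lemh} and Proposition~\ref{stab}, and its key observation is exactly yours — integrality of $r$ makes $\beta-r$ an affine root — except that the paper then simply applies the depth-zero Claim~\ref{cl1lemdel} to $\beta-r$, where you re-derive the same fact directly from the facet geometry. The only cosmetic difference is bookkeeping: the paper states the stabilized identities at level $n+r$ (i.e.\ $[A^Y_h]*\delta_{Q_{n+r}^+}=[A^{Y(Q_n^+)}_h]*\delta_{Q_{n+r}^+}$, taking $f=\delta_{I_{n+r}^+}*f$ at the end), whereas you keep the level at $n$ throughout; both are valid, since your computation in fact yields $w(\beta)(x)>n+r$.
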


\begin{lemma}\label{lemdelr}
    Let $w\in \Tilde{W}, \: \alpha \in \Tilde{\Delta}, \: Q\in \:Par $ and $n\in \mathbb{N}$. Let $J \subset J_w \backslash  \alpha $ be such that $U_{w(\alpha)} \subset Q^+_{n}, \: J \neq \Tilde{\Delta} \backslash \alpha $ and $J' = J \cup \{\alpha \}$. Then 
    \begin{equation}
        Av^{(Y_w)_{P_{J'}}}(\delta_{({P_{J'})_r^+}}) \:*\: \delta_{Q^+_{n+r}} = Av^{(Y_w)_{P_{J}}}(\delta_{({P_{J})_r^+}}) \:* \:\delta_{Q^+_{n+r}}
    \end{equation}
\end{lemma}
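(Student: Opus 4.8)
The plan is to mimic the structure of the proof of Lemma \ref{lemdel}, upgrading the congruence-subgroup bookkeeping from depth $0$ to depth $r$. The key formal point is that the depth-$r$ multiplicative units $\delta_{(P_J)_r^+}$ behave under convolution with $\delta_{Q_{n+r}^+}$ exactly as the depth-$0$ units behaved under convolution with $\delta_{Q_n^+}$, once one tracks the shift in filtration level through the conjugation by $w$.

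First I would establish the analogue of Claim \ref{cl1lemdel}, namely
\begin{equation}
(P_J)_r^+ = (P_{J'})_r^+ \,\bigl((P_J)_r^+ \cap w^{-1} Q_{n+r}^+ w\bigr).
\end{equation}
As in the depth-$0$ case it suffices to show that for $\beta \in \tilde\Phi$ with $U_\beta \subset (P_J)_r^+ \setminus (P_{J'})_r^+$ one has $U_{w(\beta)} \subset Q_{n+r}^+$. Writing $P_J = G_x$, the affine root group $U_\beta$ lies in $(G_x)_r^+$ precisely when $\beta(x) > r$, so such a $\beta$ has the form $\beta = \sum_{\alpha_i \in J} n_i \alpha_i + n'\alpha$ with $n_i \geq 0$, $n' > 0$, together with the condition $\beta(x) > r$. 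Now with $Q = G_{x'}$, the hypothesis $U_{w(\alpha)} \subset Q_n^+$ says $w(\alpha)(x') > n$, and since $w(\alpha_i) > 0$ for $\alpha_i \in J \subset J_w$, I get $w(\beta)(x') = \sum n_i w(\alpha_i)(x') + n' w(\alpha)(x') \geq n'(n) + \cdots$. The point to be careful about is that the linear part contributes at least $w(\alpha)(x') > n$ and the constant/level shift contributes the extra $r$: more precisely one should argue directly that $w(\beta)(x') > n + r$ using that $\beta$ has level (coefficient sum appropriately weighted) reflecting $\beta(x) > r$. This is the place that needs the most care, and I expect it to be the main obstacle — reconciling the ``$> r$'' coming from $\beta \in (P_J)_r^+$ with the ``$> n$'' coming from $U_{w(\alpha)} \subset Q_n^+$ to conclude ``$> n+r$'' at $w(\beta)$. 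The mechanism is that $w$ permutes affine roots and the affine-root value of $\beta$ at $x$ exceeds $r$, so after applying $w$ and evaluating at $x'$ the $n$ from $\alpha$ and the residual $> r$ combine additively.

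Given this claim, the remaining steps are routine transcriptions of the depth-$0$ argument. As there, since $J, J' \subset J_w$ I may pick $I_w \subset I$ so that $I_w w$ represents $(Y_w)_{P_J} = IwP_J/P_J$ and also $(Y_w)_{P_{J'}}$, using the length formula $|IwP_J| = q^{\ell(w)}$ from \cite{lansky} valid because $J \subset J_w$ forces $w$ to be minimal in $wW_J$. Then the computation in Claim \ref{cl2lemdel} goes through verbatim with $\delta_{P_J^+}$ replaced by $\delta_{(P_J)_r^+}$ and $\delta_{Q_n^+}$ by $\delta_{Q_{n+r}^+}$: conjugating by $y \in I_w$ moves $\delta_{w^{-1}Q_{n+r}^+ w}$ to $\delta_{Q_{n+r}^+}$, which is fixed because $I_w \subset I \subset Q$ normalizes $Q_{n+r}^+$, yielding
\begin{equation}
Av^{(Y_w)_{P_J}}(\delta_{(P_J)_r^+}) * \delta_{Q_{n+r}^+} = Av^{I_w w}\bigl(\delta_{(P_J)_r^+} * \delta_{w^{-1} Q_{n+r}^+ w}\bigr),
\end{equation}
and likewise for $J'$. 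Finally, computing the convolutions as indicator functions of products, $\delta_{(P_{J'})_r^+} * \delta_{w^{-1}Q_{n+r}^+ w} = \delta_{(P_{J'})_r^+ \, w^{-1} Q_{n+r}^+ w}$ and similarly for $J$; by the depth-$r$ version of Claim \ref{cl1lemdel} these two product sets coincide, so the two sides agree and the lemma follows. I would present the argument by pointing to Lemma \ref{lemdel} and only spelling out the filtration-shift in the analogue of Claim \ref{cl1lemdel}, since that is the only genuinely new ingredient.
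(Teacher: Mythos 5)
Your proposal follows the same route as the paper: reduce everything to a depth-$r$ analogue of Claim \ref{cl1lemdel}, namely $(P_J)_r^+ = (P_{J'})_r^+\bigl((P_J)_r^+ \cap w^{-1}Q_{n+r}^+ w\bigr)$, and then transcribe Claims \ref{cl2lemdel} and \ref{cl3lemdel} verbatim with $\delta_{P_J^+}$ replaced by $\delta_{(P_J)_r^+}$ and $\delta_{Q_n^+}$ by $\delta_{Q_{n+r}^+}$. The transcription part is correct and matches the paper exactly (representatives $I_w w$ via the length formula for $J\subset J_w$, normalization of $Q_{n+r}^+$ by $Q\supset I\supset I_w$, and the product-set identity).

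However, the one step you yourself flag as ``the main obstacle'' is exactly the step you do not prove, and as written your setup for it is slightly off. If $U_\beta\subset (P_J)_r^+\setminus(P_{J'})_r^+$, then it is \emph{not} $\beta$ itself that has the form $\sum_{\alpha_i\in J}n_i\alpha_i+n'\alpha$ with $n_i\ge 0$, $n'>0$ --- that parametrization was derived in Claim \ref{cl1lemdel} from the conditions $\beta(x)>0$, $\beta(y)\le 0$, whereas here the conditions are $\beta(x)>r$, $\beta(y)\le r$. The clean way to close the gap, and the way the paper does it, is to observe that since $r$ is an \emph{integer}, $\beta'=\beta-r$ is again an affine root, and $\beta'(x)>0$, $\beta'(y)\le 0$; applying the depth-zero Claim \ref{cl1lemdel} to $\beta'$ gives that either $\beta'(y)>0$ or $\beta'(z)>n$ (with $w^{-1}Qw=G_z$), hence either $\beta(y)>r$ or $\beta(z)>n+r$, which is precisely $U_\beta\subset (P_{J'})_r^+$ or $U_\beta\subset w^{-1}Q_{n+r}^+w$. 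Your heuristic that ``the $n$ from $\alpha$ and the residual $>r$ combine additively'' is describing the same phenomenon, but to make it a proof you must either perform this integer shift or equivalently write $\beta = r+\gamma$ with $\gamma=\sum_{\alpha_i\in J}n_i\alpha_i+n'\alpha$ and compute $w(\beta)(x')=r+\sum n_i\,w(\alpha_i)(x')+n'\,w(\alpha)(x')>r+n$. Note that this is exactly the point where integrality of the depth is used (for rational non-integral $r$ the shift $\beta\mapsto\beta-r$ does not preserve the affine root system), so it deserves to be spelled out rather than deferred.
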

\begin{proof}
    Let $x, y, z \in \X$ be such that $P_{J} = G_x, P_{J'}=G_y $ and $w^{-1}Qw = G_z$. 
\begin{claim}
        $(P_J)_r^+ = (P_{J'})_r^+ \:( (P_J)_r^+ \cap w^{-1}Q^+_{n+r} w)$
    \end{claim}
     It suffices to show that for every $\beta \in \Tilde{\Phi}$ and $U_\beta \subset (P_J)_r^+\backslash (P_{J'})_r^+$, we have $U_\beta \subset w^{-1}Q^+_{n+r} w$. This is equivalent to showing that for $\beta \in \Tilde{\Phi}$ such that $\beta(x) >r$, either $\beta(y) > r $ or $\beta(z) >n+r$. Note that what we essentially proved in the analogous claim \ref{cl1lemdel} in the depth-zero case is that if $\beta \in \Tilde{\Phi}$ such that $\beta(x)> 0$, then either $\beta(y)>0$ or $\beta(z) >n $. Now let $\beta \in \Tilde{\Phi}$ such that $ \beta(x)>r$. Since $r\in \Z$, $\beta'=\beta -r \in \Tilde{\Phi}$ and $\beta'(x)>0$. So, from claim \ref{cl1lemdel}, we see that either $\beta'(y) >0 $ or $\beta'(z) >n$,which implies that either $\beta(y)>r$ or $\beta(z)>n+r$ and proves our claim.\par
     The above claim shows that $(P_{J'})_r^+\cdot w^{-1}Q_{n+r}^+ w= (P_{J})_r^+ \cdot w^{-1}Q_{n+r}^+ w$, and using ideas in claim \ref{cl3lemdel},  we see that 
     \begin{equation}\label{eqdelr}
         \delta_{(P_{J'})_r^+}\:*\:\delta_{w^{-1}Q_{n+r}^+ w} = \delta_{(P_{J})_r^+}\:*\:\delta_{w^{-1}Q_{n+r}^+ w}.
     \end{equation}
     Following the arguments in the proof of the Lemma \ref{lemdel}, we see that we can again fix a set $I_w\subset I $ such that $I_w w$ forms a set of representatives of $Y_w$, and also for $(Y_w)_{P_J}$ for $J\subset J_w$. Hence,
     \begin{equation}
     Av^{I_w w}(h_{P_J})= Av^{(Y_w)_{P_J}}(h_{P_J}) \text{ for } h_{P_J} \in \M^r_{P_J}, \: J\subset J_w
\end{equation}
Since $Q$ normalises $Q_{n+r}^+$, the same steps as in claim \ref{cl2lemdel} gives us 
  \[
  Av^{(Y_w)_{P_{J}}}(\delta_{(P_{J})_r^+}) \:* \delta_{Q_{n+r}^+}= Av^{I_w w}(\delta_{(P_{J})_r^+}\:*\:\delta_{w^{-1}Q_{n+r}^+ w} )
  \] 
  and the same is true for $J'$ since $J,J' \subset J_w$. Thus, \eqref{eqdelr} finishes the proof of the lemma. 
\end{proof}

\begin{lemma}\label{lemhr}
    Let $w\in \Tilde{W}, \: \alpha \in \Tilde{\Delta}, \: Q\in \:Par $ and $n\in \mathbb{N}$. Let $J \subset J_w \backslash  \alpha $ be such that $U_{w(\alpha)} \subset Q^+_{n}, \: J \neq \Tilde{\Delta} \backslash \alpha $ and $J' = J \cup \{\alpha \}$. Let $h=\{ h_P\}_{P\in Par} \in A^r (G)$. Then 
    \begin{equation}\label{eqlemhr}
        Av^{(Y_w)_{P_{J'}}}(h_{P_{J'}}) \:*\: \delta_{Q^+_{n+r}} = Av^{(Y_w)_{P_{J}}}(h_{P_{J}}) \:* \:\delta_{Q^+_{n+r}}
    \end{equation}
\end{lemma}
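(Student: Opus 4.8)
The plan is to run the argument of Lemma \ref{lemh} essentially verbatim, with Lemma \ref{lemdelr} playing the role that Lemma \ref{lemdel} played there.

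First I would recall from the proof of Lemma \ref{lemdelr} that, since $J \subset J_w$ and also $J' = J \cup \{\alpha\} \subset J_w$ (the containment $\alpha \in J_w$ following from $U_{w(\alpha)} \subset Q^+_n$, hence $w(\alpha)>0$), one may fix a single set $I_w \subset I$ so that $I_w w$ is simultaneously a set of representatives for $Y_w$, for $(Y_w)_{P_J}$, and for $(Y_w)_{P_{J'}}$. Exactly as in Claim \ref{cl2lemdel} --- using only that $I_w \subset I \subset Q$ normalizes $Q^+_{n+r}$ and that $h_{P_J}$ and $h_{P_{J'}}$ are $P$-conjugation invariant --- this gives
$$
Av^{(Y_w)_{P_{J}}}(h_{P_{J}}) * \delta_{Q^+_{n+r}} = Av^{I_w w}\big(h_{P_{J}} * \delta_{w^{-1}Q^+_{n+r} w}\big),
$$
and likewise with $J$ replaced by $J'$. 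Hence I will have reduced \eqref{eqlemhr} to the single identity $h_{P_{J'}} * \delta_{w^{-1}Q^+_{n+r} w} = h_{P_{J}} * \delta_{w^{-1}Q^+_{n+r} w}$.

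To establish this I would invoke equation \eqref{eqdelr} of Lemma \ref{lemdelr}, which already gives $\delta_{(P_{J'})_r^+} * \delta_{w^{-1}Q^+_{n+r} w} = \delta_{(P_{J})_r^+} * \delta_{w^{-1}Q^+_{n+r} w}$, together with two formal facts about the inverse system $\{\M^r_P\}_{P\in Par}$: that $\delta_{(P_{J'})_r^+}$ is the multiplicative unit of $\M^r_{P_{J'}}$, so $h_{P_{J'}} * \delta_{(P_{J'})_r^+} = h_{P_{J'}}$; and that, since $P_J \subset P_{J'}$, the transition map $\phi^r_{P_J,P_{J'}}$ sends $h_{P_{J'}}$ to $h_{P_{J'}} * \delta_{(P_J)_r^+}$, which equals $h_{P_J}$ because $h = \{h_P\}$ lies in $A^r(G) = \lim_{P\in Par} \M^r_P$. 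Chaining these,
\begin{align*}
h_{P_{J'}} * \delta_{w^{-1}Q^+_{n+r} w}
&= h_{P_{J'}} * \delta_{(P_{J'})_r^+} * \delta_{w^{-1}Q^+_{n+r} w} \\
&= h_{P_{J'}} * \delta_{(P_{J})_r^+} * \delta_{w^{-1}Q^+_{n+r} w} \\
&= h_{P_{J}} * \delta_{w^{-1}Q^+_{n+r} w},
\end{align*}
which is exactly what is needed.

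I do not expect a genuine obstacle here. The substantive, root-theoretic input --- that $\beta(x) > r$ forces $\beta(y) > r$ or $\beta(z) > n+r$, which is where integrality of $r$ enters (via $\beta - r$ being again an affine root) --- is already packaged in Lemma \ref{lemdelr}. What remains is only the formal passage from idempotents to arbitrary elements of the inverse limit, mirroring the step from Lemma \ref{lemdel} to Lemma \ref{lemh}; the one point requiring mild care is tracking the shift from $n$ to $n+r$ and remembering that the relevant idempotent here is $\delta_{(P_J)_r^+}$ rather than $\delta_{P_J^+}$, which is bookkeeping rather than difficulty.
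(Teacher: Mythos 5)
Your proposal is correct and follows essentially the same route as the paper: reduce via the $Av^{I_w w}$ identity to the single equality $h_{P_{J'}} * \delta_{w^{-1}Q^+_{n+r}w} = h_{P_J} * \delta_{w^{-1}Q^+_{n+r}w}$, then chain the unit $\delta_{(P_{J'})_r^+}$, the identity \eqref{eqdelr} from Lemma \ref{lemdelr}, and the transition relation $h_{P_{J'}} * \delta_{(P_J)_r^+} = h_{P_J}$. This is exactly the paper's displayed computation.
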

\begin{proof}
   Using the same idea as in lemma \ref{lemdelr}, we see that 
   \[
    Av^{(Y_w)_{P_{J}}}(h_{P_{J}}) \:*\: \delta_{Q^+_{n+r}} = Av^{I_w w}(h_{P_{J}}\:*\:\delta_{w^{-1}Q^+_{n+r} w} )
   \]
   and the same is true for $J'$. Thus following lemma \ref{lemh}, it suffices to show that 
   \[
   h_{P_{J}}\:*\:\delta_{w^{-1}Q^+_{n+r} w} = h_{P_{J'}}\:*\:\delta_{w^{-1}Q^+_{n+r} w}
   \]
   From \eqref{eqdelr} in lemma \ref{lemdelr}, we have $ \delta_{(P_{J'})_r^+}\:*\:\delta_{w^{-1}Q_{n+r}^+ w} = \delta_{(P_{J})_r^+}\:*\:\delta_{w^{-1}Q_{n+r}^+ w}$. Using that and the fact that $h_{P_{J'}}*\delta_{(P_{J})_r^+}=h_{P_{J}}$, we get 
   \begin{align*}
    h_{P_{J'}}\:*\:\delta_{w^{-1}Q_{n+r}^+ w} &= h_{P_{J'}}\:*\:(\delta_{(P_{J'})_r^+}\: * \: \delta_{w^{-1}Q_{n+r}^+ w})\\
    &= (h_{P_{J'}}\:*\:\delta_{(P_{J})_r^+})\: * \: \delta_{w^{-1}Q_{n+r}^+ w}\\
    &= h_{P_{J}}\:*\:\delta_{w^{-1}Q_n^+ w}
\end{align*}
and we are done.  
\end{proof}
\begin{proposition}\label{stabr}
    Let $Q\in Par, \: Y\in \Omega $ and $h=\{ h_P\}_{P\in Par}\in A^r (G)$. If $Y\supset Y(Q_n^+)$, we have 
    \begin{equation}
        [A^Y_h] \:*\: \delta_{Q_{n+r}^+} = [A^{Y(Q_n^+)}_h]\:*\: \delta_{Q_{n+r}^+}
    \end{equation}
\end{proposition}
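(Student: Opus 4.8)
The plan is to transcribe the proof of Proposition~\ref{stab} essentially unchanged, replacing each use of Lemma~\ref{lemh} by its positive-depth version Lemma~\ref{lemhr}; the only new bookkeeping is that Lemma~\ref{lemhr} converts a hypothesis at level $n$ into a conclusion at level $n+r$, which is exactly the shift appearing in the statement. Concretely, I would induct on the number of $I$-orbits contained in $Y\setminus Y(Q_n^+)$, a finite number since $Y\in\Omega$. If it is zero then $Y = Y(Q_n^+)$ and there is nothing to prove, so assume $Y\supsetneq Y(Q_n^+)$.

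For the inductive step, choose $w\in\Tilde{W}$ of maximal length subject to $Y_w\subset Y$ and $Y_w\not\subset Y(Q_n^+)$. Just as in the depth-zero proof, such a $w$ necessarily lies in $\Tilde{W}\setminus S(Q_n^+)$ -- if $w\in S(Q_n^+)$ then $Y_w\subset Y(Q_n^+)$, because $Y(Q_n^+)$ is downward closed and contains every $Y_{w'}$ with $w'\in S(Q_n^+)$ -- the orbit $Y_w$ is open in $Y$, and $Y' := Y\setminus Y_w$ again lies in $\Omega$, still contains $Y(Q_n^+)$, and has one fewer orbit in $Y'\setminus Y(Q_n^+)$. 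The cell identity $Y_{P_J}\setminus Y'_{P_J} = (Y_w)_{P_J}$ for $J\subset J_w$ and $Y_{P_J} = Y'_{P_J}$ otherwise -- which only concerns $I$-cosets and is therefore insensitive to $r$ -- gives, exactly as before,
\[
[A^Y_h] = [A^{Y'}_h] + [A^w_h], \qquad [A^w_h] := \sum_{J\subset J_w}(-1)^{r(G)-|J|}\,Av^{(Y_w)_{P_J}}(h_{P_J}).
\]
Convolving with $\delta_{Q_{n+r}^+}$ and invoking the inductive hypothesis for $Y'$, the proof reduces to showing $[A^w_h]*\delta_{Q_{n+r}^+} = 0$.

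For this last vanishing I would argue verbatim as in Proposition~\ref{stab}. Since $w\notin S(Q_n^+)$, there is $\alpha\in\Tilde{\Delta}$ with $U_{w(\alpha)}\subset Q_n^+$, and then $\alpha\in J_w$. Grouping the sum defining $[A^w_h]$ according to whether $\alpha\in J$ and pairing each $J\subset J_w\setminus\alpha$ with $J' = J\cup\{\alpha\}$, Lemma~\ref{lemhr} -- whose hypothesis $U_{w(\alpha)}\subset Q_n^+$ is precisely what is available -- gives
\[
Av^{(Y_w)_{P_{J'}}}(h_{P_{J'}})*\delta_{Q_{n+r}^+} = Av^{(Y_w)_{P_J}}(h_{P_J})*\delta_{Q_{n+r}^+},
\]
and since the signs attached to $J$ and $J'$ are opposite, the paired terms cancel, so $[A^w_h]*\delta_{Q_{n+r}^+} = 0$. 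This closes the induction.

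I do not expect a genuine obstacle in this proposition: all the substantive depth-$r$ input has already been isolated in Lemmas~\ref{lemdelr} and~\ref{lemhr}, where the integrality $r\in\Z$ is used through the translation $\beta\mapsto\beta - r$ of affine roots. The one thing that requires attention is keeping the two levels straight -- the hypothesis needed to apply Lemma~\ref{lemhr} is $U_{w(\alpha)}\subset Q_n^+$ (matching the definition of $S(Q_n^+)$ and hence of $Y(Q_n^+)$), whereas the conclusion, and therefore the statement of the proposition, lives at level $n+r$. Once that is kept in view, the argument is a formal transcription of the depth-zero proof.
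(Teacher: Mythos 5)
Your proposal is correct and follows essentially the same route as the paper's proof: induction on the number of $I$-orbits in $Y\setminus Y(Q_n^+)$, the decomposition $[A^Y_h]=[A^{Y'}_h]+[A^w_h]$ for a maximal orbit $Y_w$ with $w\notin S(Q_n^+)$, and the cancellation $[A^w_h]*\delta_{Q_{n+r}^+}=0$ via the pairing $J\leftrightarrow J\cup\{\alpha\}$ and Lemma~\ref{lemhr}. Your explicit attention to the level shift (hypothesis $U_{w(\alpha)}\subset Q_n^+$ at level $n$, conclusion at level $n+r$) matches exactly how the paper uses Lemma~\ref{lemhr}.
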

\begin{proof}
    We use the same idea as in the proof of proposition \ref{stab}, and proceed by induction on the number of $I-$orbits in $Y\backslash Y(Q_n^+)$. For $Y\in \Omega$ and $w\in \Tilde{W} \backslash S(Q_n^+)$ chosen such that $Y_w\subset Y$ is maximal, $Y'=Y\backslash Y_w \in \Omega $ and it is enough to show that 
    \[
    [A^Y_h] \:*\: \delta_{Q_{n+r}^+} = [A^{Y'}_h]\:*\: \delta_{Q_{n+r}^+}
    \]
    The exact same steps as in the first part of claim \ref{cl1stab} gives us\[
     [A^Y_h]= [A^{Y'}_h] + [A^w_h],\]
     where $[A^w_h]= \sum_{J\subset J_w} (-1)^{r(G)-|J|} Av^{(Y_w)_{P_J}}(h_{P_J})$. Hence, as in the depth-zero case, it is enough to show that $[A^w_h]*\delta_{Q_{n+r}^+}=0\:\forall w \in \Tilde{W} \backslash S(Q_n^+)$ such that $Y_w \subset Y$. By definition of $S(Q_n^+)$, for each $w \in \Tilde{W} \backslash S(Q_n^+)$, $\exists \alpha \in \Tilde{\Delta}$ such that $U_{w(\alpha)} \subset Q_n^+$, and hence $\alpha \in J_w$. Let $J'=J\cup \alpha$. Using Lemma \ref{lemhr} and following the steps in the proof of claim \ref{cl1stab} in lemma \ref{stab}, we see
\begin{align*}
[A^w_h] * \delta_{Q_{n+r}^+} &= \sum_{J\subset J_w} (-1)^{r(G)-|J|} \left(Av^{(Y_w)_{P_J}}(h_{P_J}) \: * \: \delta_{Q_{n+r}^+} \right) \\
&= \sum_{J\subset J_w\backslash \alpha } (-1)^{r(G)-|J|} \left(\left(Av^{(Y_w)_{P_J}}(h_{P_J}) *\delta_{Q_{n+r}^+}\right)- \left(Av^{(Y_w)_{P_{J'}}}(h_{P_{J'}})* \delta_{Q_{n+r}^+}\right) \right)\\
&=0,
\end{align*}
which finishes the proof of the proposition.
\end{proof}
Using the above lemmas and propositions, we can complete the proof of Theorem \ref{thm1r}.
\begin{proof}[Proof of Theorem \ref{thm1r}]
     We are trying to prove that for every $f\in \mc H(G)$ and $h\in A^r(G)$, the sequence $\{[A^Y_h] * f\}_{Y \in \Omega} $ stabilizes. For any $f\in \mc H(G)$, $\exists \: n\in \N $ such that $f$ is left $I_{n+r}^+$-invariant, i.e., $f= \delta_{I_{n+r}^+} * f$. So,
    \[
    [A^Y_h] * f = [A^Y_h] * \delta_{I_{n+r}^+} * f
    \]
    Now, from Proposition \ref{stabr}, we observe that 
    \[
    [A^Y_h] * \delta_{I_{n+r}^+} = [A^{Y(I_n^+)}_h]* \delta_{I_{n+r}^+}
    \]
    for for $n \in \N $ and large enough $Y \in \Omega $ such that $Y \supset Y(I_n^+)$, since $Y(I_n^+)$ is finite by Lemma \ref{lemfin}. Hence, for $Y \supset Y(I_n^+)$, the same steps as in the depth-zero case gives us  
    \begin{align*}
        [A^Y_h] * f &= [A^Y_h] * \delta_{I_{n+r}^+} * f\\
        &=[A^{Y(I_n^+)}_h]\:*\: \delta_{I_{n+r}^+} *f\\
        &=[A^{Y(I_n^+)}_h] *f
    \end{align*}
     and $\{[A^Y_h] * f\}_{Y \in \Omega} $ stabilizes. 
\end{proof}

\subsection{A limit description of the positive depth Bernstein center}
As a consequence of Theorem \ref{thm1r}, we see that $\lim_{Y\in \Omega} \:[A^Y_h] * f $ is well-defined. So, for each $h\in A^r (G)$, we can define $[A_h] \in \text{End}_{\HH(G)^{op}}(\HH(G))$ by the formula         
\begin{equation}
    [A_h](f):=\lim_{Y\in \Omega} \:[A^Y_h] * f.
\end{equation}
\begin{proposition}\label{evalr}
    Given $h =\{h_P\}_{P\in Par}\in A^r(G),\: Y\in \Omega $ and $P\in Par$, we have 
    \begin{equation}
        [A^Y_h] * \delta_{P_r^+} = h_P
    \end{equation}
\end{proposition}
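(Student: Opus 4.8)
The plan is to mirror exactly the proof of Proposition \ref{eval} from the depth-zero case, replacing the role of $\delta_{P^+}$ by $\delta_{P_r^+}$ and the various congruence subgroups by their $r$-shifted analogues, using the positive-depth lemmas already established. First I would invoke Lemma \ref{lemfin}: since $S(P^+)=\{1\}$, we have $Y(P^+)=Y_1$ with $J_1=\tilde\Delta$, and since $Y\subset Y_1$ for all $Y\in\Omega$, Proposition \ref{stabr} (applied with $Q=P$ and $n=0$, so that $\delta_{Q_{n+r}^+}=\delta_{P_r^+}$) gives $[A^Y_h]*\delta_{P_r^+}=[A^{Y_1}_h]*\delta_{P_r^+}$ for every $Y\in\Omega$. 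Thus it suffices to compute $[A^{Y_1}_h]*\delta_{P_r^+}$.

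Next I would expand $[A^{Y_1}_h]$ the same way as in Proposition \ref{eval}. Pick $\alpha\in\tilde\Delta$ with $U_\alpha\subset P^+$ (hence, since $r$ is a positive integer, also $U_\alpha\subset P_r^+$ — actually what matters is the building-point inequality $\alpha(x)>0$ where $P=G_x$), set $P'=P_{\tilde\Delta\setminus\alpha}$, and group the parahoric sum into the top term $P'$ plus a telescoping sum of pairs $(P_J,P_{J'})$ with $J\subsetneq\tilde\Delta\setminus\alpha$ and $J'=J\cup\{\alpha\}$:
\begin{equation*}
[A^{Y_1}_h]=h_{P'}+\sum_{J\subsetneq\tilde\Delta\setminus\alpha}(-1)^{r(G)-|J|}\bigl(h_{P_J}-h_{P_{J'}}\bigr),
\end{equation*}
using $Av^{(Y_1)_{P_J}}(h_{P_J})=h_{P_J}$ since $(Y_1)_{P_J}$ is a single point. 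Since $P=G_x$ with $\alpha(x)>0$ we have $P\subset P'$, hence $h_{P'}*\delta_{P_r^+}=h_P$ directly from the inverse-system relation $\phi^r_{P,P'}(h_{P'})=h_{P'}*\delta_{P_r^+}=h_P$. So the proof reduces to showing each paired difference dies after convolving with $\delta_{P_r^+}$, i.e. $h_{P_J}*\delta_{P_r^+}=h_{P_{J'}}*\delta_{P_r^+}$.

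For that last reduction I would use $h_{P_J}=h_{P_{J'}}*\delta_{(P_J)_r^+}$ (from $P_J\subset P_{J'}$) to rewrite the difference as $h_{P_{J'}}*\bigl(\delta_{(P_J)_r^+}*\delta_{P_r^+}-\delta_{(P_{J'})_r^+}*\delta_{P_r^+}\bigr)$, so it is enough to prove the group-theoretic identity $(P_J)_r^+\cdot P_r^+=(P_{J'})_r^+\cdot P_r^+$, equivalently $\delta_{(P_J)_r^+}*\delta_{P_r^+}=\delta_{(P_{J'})_r^+}*\delta_{P_r^+}$. This is the positive-depth analogue of Claim \ref{cl1eval}, and it follows the same way: writing $P_J=G_x$, $P_{J'}=G_y$, $P=G_z$, any root $\beta$ with $U_\beta\subset(P_J)_r^+\setminus(P_{J'})_r^+$ satisfies $\beta(x)>r$ and $\beta(y)\le r$; since $r\in\Z$ one passes to $\beta'=\beta-r\in\tilde\Phi$ with $\beta'(x)>0$, $\beta'(y)\le 0$, applies the depth-zero dichotomy from Claim \ref{cl1eval} (either $\beta'(y)>0$ or $\beta'(z)>0$) to conclude $\beta'(z)>0$, hence $\beta(z)>r$ and $U_\beta\subset P_r^+$ — exactly the shift trick already used in Lemma \ref{lemdelr}. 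Chaining the equalities then gives
\begin{equation*}
[A^Y_h]*\delta_{P_r^+}=[A^{Y_1}_h]*\delta_{P_r^+}=h_{P'}*\delta_{P_r^+}+\sum_{J\subsetneq\tilde\Delta\setminus\alpha}(-1)^{r(G)-|J|}\bigl(h_{P_J}*\delta_{P_r^+}-h_{P_{J'}}*\delta_{P_r^+}\bigr)=h_P.
\end{equation*}
The only mildly delicate point — the ``main obstacle'' — is making sure the integrality of $r$ is genuinely used in the shift $\beta\mapsto\beta-r$ so that $\beta-r$ is again an affine root; this is precisely why the positive integral-depth case goes through verbatim and why rational depths are deferred. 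Everything else is bookkeeping identical to Proposition \ref{eval}.
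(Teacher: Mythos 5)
Your proposal is correct and follows essentially the same route as the paper: reduce to $[A^{Y_1}_h]*\delta_{P_r^+}$ via Proposition \ref{stabr} with $Q=P$, $n=0$, telescope the parahoric sum around a simple affine root $\alpha$ with $U_\alpha\subset P^+$, use $h_{P'}*\delta_{P_r^+}=h_P$ from the inverse-system relation, and kill the paired differences by the identity $\delta_{(P_J)_r^+}*\delta_{P_r^+}=\delta_{(P_{J'})_r^+}*\delta_{P_r^+}$, proved by the shift $\beta\mapsto\beta-r$ reducing to the depth-zero dichotomy of Claim \ref{cl1eval}. This is precisely the paper's argument, including the correct identification of where integrality of $r$ enters.
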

\begin{proof}
    We have from proposition \ref{stabr} that $\forall\: Y\in \Omega$
    \[
    [A^Y_h] * \delta_{P_r^+} = [A^{Y_1}_h] * \delta_{P_r^+}
    \]
    since $Y(P^+) = Y_1$. Let $\alpha \in \Tilde{\Delta}$ be such that $U_{\alpha} \subset P^+$, $J'=J\cup \{\alpha\}$  and denote $P_{\{\Tilde{\Delta}\backslash \alpha\}} \in Par$ by $P'$. Then
    \begin{align*}
        [A^{Y_1}_h] &= \sum_{P \in Par} (-1)^{r(G)-r(P)}Av^{{(Y_1})_P}(h_P)\\
        &= Av^{(Y_1)_{P'}}(h_{P'}) +\sum_{J\subsetneq \Tilde{\Delta}\backslash \alpha} (-1)^{r(G)-|J|} \left(Av^{(Y_1)_{P_J}}(h_{P_J}) - Av^{(Y_1)_{P_{J'}}}(h_{P_{J'}}) \right)\\
        &= h_{P'} + \sum_{J\subsetneq \Tilde{\Delta}\backslash \alpha} (-1)^{r(G)-|J|} (h_{P_J}-h_{P_{J'}})
    \end{align*}
    Using the same arguments as in the proof of proposition \ref{eval}, we see that $P \subset P'$ and hence $h_{P'} * \delta_{P_r^+}=h_P$. So, if we can prove that $h_{P_J} * \delta_{P_r^+} = h_{P_{J'}} * \delta_{P_r^+}$, we are done. Note that $h_{P_J} = h_{P_{J'}} * \delta_{(P_J)_r^+}$ since $P_{J} \subset P_{J'}$, which gives us
\begin{align*}
    h_{P_J} * \delta_{P_r^+} - h_{P_{J'}} *\delta_{P_r^+} &= h_{P_{J'}} * \delta_{(P_J)_r^+}* \delta_{P_r^+}-h_{P_{J'}} *\delta_{(P_{J'})_r^+} *\delta_{P_r^+}\\
    &=h_{P_{J'}} * \left(\delta_{(P_J)_r^+}* \delta_{P_r^+}- \delta_{(P_{J'})_r^+} *\delta_{P_r^+}\right)
\end{align*}
meaning that it is enough to show $\delta_{(P_J)_r^+}* \delta_{P_r^+}= \delta_{(P_{J'})_r^+} *\delta_{P_r^+}$ to complete the proof of the proposition.
\begin{claim}\label{cl1evalr}
    $\delta_{(P_J)_r^+}* \delta_{P_r^+}= \delta_{(P_{J'})_r^+} *\delta_{P_r^+}$
\end{claim}
Let $x,\: y,\: z \in \X$ be such that $P_J =G_x,\: P_{J'} = G_y $ and $P=G_z$. To prove the claim it is enough to show that $(P_J)_r^+ \cdot P_r^+ =(P_{J'})_r^+ \cdot P_r^+ $. We already know that $(P_J)_r^+ \cdot P_r^+ \supset (P_{J'})_r^+ \cdot P_r^+ $. To show the reverse inclusion,let $\beta \in \tilde{\Phi} $ be such that $U_\beta \subset (P_J)_r^+ \backslash (P_{J'})_r^+ $, and we show that $U_\beta \subset P_r^+ $. This is equivalent to showing that $\beta(x) >r $ implies that either $\beta(y)>r$ or $\beta(z)>r$. Note that in claim \ref{cl1eval}, we proved that if $\beta(x) >0$, then $\beta(y) >0$ or $\beta(z)>0$. Now, if $\beta(x)>r$, consider $\beta'=\beta-r \in \Tilde{\Phi}$. Then, $\beta'(x)>0$, which implies $\beta'(y)>0$ or $\beta'(z)>0$, and hence either $\beta(y)>r$ or $\beta(z)>r$ finishing the proof of the claim. 
\end{proof}
\begin{remark}\label{rmkevalr}
Note that proposition \ref{evalr} implies that for $h =\{h_P\}_{P\in Par}\in A^r(G)$ and $\forall P\in Par$, we have $[A_h](\delta_{P_r^+})= h_P$. 
\end{remark}
\begin{proposition}\label{thm2r}
    For each $h\in A^r(G)$, we have $[A_h]\in \emph{End}_{\HH(G)^{2}}(\HH(G))  \simeq \mc Z(G)$, and the assignment $h\mapsto [A_h]$ defines an algebra map 
        \begin{align*}
            [A^r]\: :\: A^r(G) &\longrightarrow \mc Z(G) \\
            h &\longmapsto [A_h]
        \end{align*} 
\end{proposition}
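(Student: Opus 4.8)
The plan is to mirror, almost verbatim, the depth-zero argument carried out in Proposition~\ref{mainthm0}, now using the depth-$r$ inputs already established: Theorem~\ref{thm1r} (stabilization), Proposition~\ref{evalr} and Remark~\ref{rmkevalr} (evaluation: $[A_h](\delta_{P_r^+})=h_P$), and Lemmas~\ref{lemdelr}, \ref{lemhr} and Proposition~\ref{stabr}. First I would note that well-definedness of $[A_h]\in\mathrm{End}_{\HH(G)^{op}}(\HH(G))$ is already granted by Theorem~\ref{thm1r}, and that $[A_h](f*g)=[A_h](f)*g$ is immediate from the formula $[A_h](f)=\lim_{Y}[A^Y_h]*f$ together with associativity of convolution, exactly as in the depth-zero case. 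So the substantive content is the analogue of Claim~\ref{gmod}: for every $g\in G(\mathbbm k)$ and $f\in\HH(G)$, $Ad_g([A_h](f))=[A_h](Ad_g(f))$.

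For that $G(\mathbbm k)$-equivariance statement I would reproduce the two-step argument: fix $P_0\in Par$, choose $Y\in\Omega$ large enough (using Theorem~\ref{thm1r}) so that $[A_h](f)=[A^Y_h]*f$ and $[A_h](Ad_p(f))=[A^Y_h]*Ad_p(f)$ for the given $p\in P_0$, then enlarge $Y$ to a $P_0$-invariant $Y^{P_0}\in\Omega$ (possible since $|P_0/I|<\infty$ and $Y$ is $I$-invariant); for such $Y^{P_0}$ one has $Ad_p([A^{Y^{P_0}}_h])=[A^{Y^{P_0}}_h]$ because $Ad_p$ permutes the terms $Av^{Y^{P_0}_P}(h_P)$ among themselves, and the conclusion follows from $Ad_p([A^{Y^{P_0}}_h]*f)=Ad_p([A^{Y^{P_0}}_h])*Ad_p(f)$. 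Since $G(\mathbbm k)$ is generated by the parahorics $P\in Par$, writing $g=\prod p_i$ with $p_i\in P_i$ and composing gives $Ad_g$-equivariance. Combined with the obvious compatibility with right translations $\mathcal R_g([A_h](f))=[A_h](\mathcal R_g(f))$ (again read off from the definition), this shows $[A_h]$ is a $G(\mathbbm k)^2$-module map, hence an $\HH(G)^2$-module endomorphism of $\HH(G)$, i.e.\ $[A_h]\in\mathrm{End}_{\HH(G)^2}(\HH(G))\simeq\mathcal Z(G)$.

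It remains to check $h\mapsto[A_h]$ is an algebra map, where $A^r(G)$ carries the product $h*h'=\{h_P*h'_P\}_{P\in Par}$. As in Claim~\ref{alg}, I would reduce the identity $[A_{h*h'}]=[A_h]\circ[A_{h'}]$ first to the pointwise statement $[A_h](Av^{Y_P}(h'_P))=Av^{Y_P}(h_P*h'_P)$ for all $P\in Par$, $Y\in\Omega$ (by choosing $Y$ large relative to a test function $f$ and expanding $[A^Y_{h*h'}]*f$), and then, using the $Ad_y$-equivariance just proved together with Remark~\ref{rmkevalr}, to the chain
\begin{align*}
[A_h](Av^{Y_P}(h'_P))&=Av^{Y_P}([A_h](h'_P))=Av^{Y_P}([A_h](\delta_{P_r^+}*h'_P))\\
&=Av^{Y_P}([A_h](\delta_{P_r^+})*h'_P)=Av^{Y_P}(h_P*h'_P),
\end{align*}
where the third equality uses $[A_h](f*g)=[A_h](f)*g$ and the last uses Remark~\ref{rmkevalr}. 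This completes the proof. The only place requiring genuine care — and which I regard as the main obstacle — is the passage to a $P_0$-invariant enlargement $Y^{P_0}$ and the verification that $Ad_p$ genuinely fixes $[A^{Y^{P_0}}_h]$; everything hinges on the combinatorial fact that conjugation by $p\in P_0$ sends the orbit sum $Av^{Y^{P_0}_P}(h_P)$ to itself (since $Y^{P_0}_P$ is $P_0$-stable and $h_P$ is $P$-conjugation invariant), which is exactly the point where the structure of $\Omega$ and the finiteness in Lemma~\ref{lemfin} are used. The rest is a formal transcription of Section~\ref{depth zero center} with $P^+$ replaced by $P_r^+$ and the shift $n\rightsquigarrow n+r$ already absorbed into Proposition~\ref{stabr}.
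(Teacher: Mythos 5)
Your proposal is correct and follows essentially the same route as the paper: the paper's proof of Proposition~\ref{thm2r} likewise observes that $[A_h](f*g)=[A_h](f)*g$ is immediate from the definition, invokes the depth-zero argument of Claim~\ref{gmod} verbatim for $Ad_g$-equivariance, and reduces the algebra-map property to $[A_h](Av^{Y_P}(h'_P))=Av^{Y_P}(h_P*h'_P)$ via Remark~\ref{rmkevalr}. (A minor quibble: the $P_0$-invariant enlargement $Y^{P_0}$ rests on the finiteness of $P_0/I$ and the $I$-invariance of $Y$, not on Lemma~\ref{lemfin}, which enters only in the stabilization step.)
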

\begin{proof}
   It can be easily observed from the definition of $[A_h]$ that $[A_h](f*g)=[A_h](f)*g$ for $h\in A^r(G)$. Since, $[A_h]$ commutes with right convolutions, it is enough to show that it is $G(\mathbbm k)$-conjugation equivariant.  
     \begin{claim}\label{gmodr}
         Given $g\in G(\mathbbm k)$ and arbitrary $f\in \HH(G),\: h\in A^r(G)$, we have 
         \[
         Ad_g([A_h](f))=[A_h](Ad_g(f)) 
         \]
     \end{claim}
     This claim and the proof of it is exactly the same as the claim \ref{gmod} in the depth-zero case. Using the same ideas, we have that for $h\in A^r(G)$, $[A_h]\in  \text{End}_{\HH(G)^{2}}(\HH(G)) \simeq \mc Z(G)$.\par
    The next step is showing that the map $[A^r] : A^r(G) \longrightarrow \mc Z(G)$ is an algebra map. 
     \begin{claim}\label{algr}
 Given $h,\: h'\in A^r(G)$, we have $[A_{h*h'}]=[A_h]\circ [A_{h'}]$   
\end{claim}
Again, following the steps in the proof of claim \ref{alg}, we see that it is enough to show 
\begin{equation}\label{alg1r}
        [A_h](Av^{Y_P}({h'}_P))= Av^{Y_P}(h_P * {h'}_P) \:\: \forall P\in Par,
    \end{equation}
    where $h,\: h'\in A^r(G)$. Further, observe that similar to the depth-zero case, it is enough to show that 
    \begin{equation}\label{alg2r}
    Ad_y\circ [A_h] = [A_h]\circ Ad_y
    \end{equation}
    for $y\in G(\mathbbm k)$. This is because,if \eqref{alg2r} is true, we have using proposition \ref{evalr} that 
    \begin{align*}
    [A_h](Av^{Y_P}({h'}_P))&= Av^{Y_P}([A_h]({h'}_P))\\
    &= Av^{Y_P}([A_h](\delta_{P_r^+} * {h'}_P))\\
    &= Av^{Y_P}([A_h](\delta_{P_r^+}) * {h'}_P)\\
    &= Av^{Y_P}(h_P * {h'}_P)
\end{align*}
    Now, \eqref{alg2r} is certainly true from claim \ref{gmodr}, which finishes the proof of the proposition. 
\end{proof}
\begin{proposition}\label{stabrv}
    For every $V \in R(G)$ and $v\in V$, $\{[A^Y_h] (v)\}_{Y \in \Omega} $ stabilizes and $[A_h](v)= \lim_{Y \in \Omega}[A^Y_h] (v)$.
\end{proposition}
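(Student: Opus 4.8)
The plan is to reduce the statement, via smoothness of $V$, to the stabilization already proved in Proposition \ref{stabr}, and then to recognize the resulting stable value as the action on $v$ of the Bernstein center element $[A_h]$ constructed in Proposition \ref{thm2r}.

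First I would fix $V\in R(G)$ and $v\in V$. Since $V$ is smooth and the congruence subgroups $I_m^+$, $m\in\N$, form a neighbourhood basis of the identity, there is $n\in\N$ with $v\in V^{I_{n+r}^+}$, i.e. $\delta_{I_{n+r}^+}(v)=v$. As each $[A^Y_h]$ lies in $\HH(G)$, the $\HH(G)$-module axioms give, for every $Y\in\Omega$,
\[
[A^Y_h](v)=[A^Y_h]\bigl(\delta_{I_{n+r}^+}(v)\bigr)=\bigl([A^Y_h]*\delta_{I_{n+r}^+}\bigr)(v).
\]
Now I would apply Proposition \ref{stabr} with $Q=I$, so that $Q_n^+=I_n^+$ and $Q_{n+r}^+=I_{n+r}^+$: it gives $[A^Y_h]*\delta_{I_{n+r}^+}=[A^{Y(I_n^+)}_h]*\delta_{I_{n+r}^+}$ for all $Y\supseteq Y(I_n^+)$, and $Y(I_n^+)\in\Omega$ is finite by Lemma \ref{lemfin}. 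Hence $[A^Y_h](v)=[A^{Y(I_n^+)}_h](v)$ for all $Y\supseteq Y(I_n^+)$, which proves that $\{[A^Y_h](v)\}_{Y\in\Omega}$ stabilizes, with stable value $[A^{Y(I_n^+)}_h](v)$.

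It then remains to identify this stable value with $[A_h](v)$, where on the left $[A_h]$ denotes the endomorphism of $V$ induced by the Bernstein center element of Proposition \ref{thm2r}. Writing $[A_h]\in\mathrm{End}_{\HH(G)^2}(\HH(G))$ for the corresponding Hecke-algebra endomorphism, the defining compatibility recalled in Section \ref{depth zero center}, namely $z_V(f(v))=(z_\HH(f))(v)$ for $z=[A_h]$ and $f\in\HH(G)$, applied to $f=\delta_{I_{n+r}^+}$ together with $\delta_{I_{n+r}^+}(v)=v$, yields
\[
[A_h](v)=\bigl([A_h](\delta_{I_{n+r}^+})\bigr)(v)=\Bigl(\lim_{Y\in\Omega}[A^Y_h]*\delta_{I_{n+r}^+}\Bigr)(v)=\bigl([A^{Y(I_n^+)}_h]*\delta_{I_{n+r}^+}\bigr)(v)=[A^{Y(I_n^+)}_h](v),
\]
which is exactly the stable value found above; here I used the definition $[A_h](\delta_{I_{n+r}^+})=\lim_{Y}[A^Y_h]*\delta_{I_{n+r}^+}$ and again Proposition \ref{stabr}. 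Combining the two steps gives $[A_h](v)=\lim_{Y\in\Omega}[A^Y_h](v)$, as desired.

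I do not expect a genuine obstacle here: this is the proof of Theorem \ref{thm1r} with the test function $f$ replaced by a vector $v$, the only extra ingredient being the module-level compatibility between $[A_h]$ viewed as an operator on $\HH(G)$ and as an element of $\mathcal Z(G)$. The one point to handle with a little care is the reduction to $\delta_{I_{n+r}^+}$: it rests on choosing $n$ so that $v$ is $I_{n+r}^+$-fixed, which is possible precisely because $V$ is smooth and $I_{m'}^+\subseteq I_m^+$ for $m'\ge m$, so that a vector fixed by some congruence subgroup is automatically fixed by $I_{n+r}^+$ for all sufficiently large $n$.
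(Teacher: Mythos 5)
Your proof is correct and follows essentially the same route as the paper: the paper picks an arbitrary $f\in\HH(G)$ with $fv=v$ and invokes Theorem \ref{thm1r}, while you specialize $f=\delta_{I_{n+r}^+}$ and unwind to Proposition \ref{stabr}, which is exactly how Theorem \ref{thm1r} is proved anyway. The identification of the stable value with $[A_h](v)$ via $z_V(f(v))=(z_\HH(f))(v)$ is also the paper's argument.
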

\begin{proof}
    Choose $f\in \HH(G)$ such that $fv=v$. Then, $$\{[A^Y_h] (v)\}_{Y \in \Omega} = \{[A^Y_h] (fv)\}_{Y \in \Omega} = \{([A^Y_h]*f) (v)\}_{Y \in \Omega}. $$ Since,  $\{([A^Y_h]*f)\}_{Y \in \Omega}$ stabilizes by Theorem \ref{thm1r}, $\{[A^Y_h] (v)\}_{Y \in \Omega}$ also stabilizes and $\lim_{Y \in \Omega}[A^Y_h] (v)$ is well-defined. Moreover, we have 
    $$[A_h](v)=[A_h](fv)=([A_h](f))(v)=\lim_{Y \in \Omega}([A^Y_h]*f)(v)= \lim_{Y \in \Omega}([A^Y_h](fv))= \lim_{Y \in \Omega}[A^Y_h] (v)$$ which gives us the equality. 
\end{proof}
\begin{proposition}
    Let $\delta_r$ denote $\{\delta_{P_r^+}\}_{P\in Par} \in A^r(G)$. Then, $[A_{\delta_r}]$ is the projector to the depth-$r$ part of the Bernstein center, i.e., $[A_{\delta_r}]= z^r \in \ZZ^r(G)\subset \ZZ(G)$
\end{proposition}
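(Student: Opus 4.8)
The plan is to identify $[A_{\delta_r}]$ with $z^r$ by computing, for each irreducible representation, the scalar through which $[A_{\delta_r}]$ acts, and checking that this scalar equals $1$ precisely for representations of depth $\leq r$; since the algebra map $\mathcal Z(G)\to Fun(Irr(G),\C)$ is injective, this pins down $[A_{\delta_r}]=z^r$. First I would record that $\delta_r=\{\delta_{P_r^+}\}_{P\in Par}$ is the multiplicative unit of $A^r(G)$: if $P\subset Q$ in $Par$ then $P^+\supseteq Q^+$, hence $P_r^+\supseteq Q_r^+$, so $\phi^r_{P,Q}(\delta_{Q_r^+})=\delta_{Q_r^+}*\delta_{P_r^+}=\delta_{P_r^+}$, and each $\delta_{P_r^+}$ is the unit of $\M^r_P$. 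Consequently, $[A^r]$ being an algebra homomorphism (Proposition \ref{thm2r}), $[A_{\delta_r}]$ is a central idempotent in $\mathcal Z(G)$, so by Schur's lemma it acts on each $(\pi,V)\in Irr(G)$ through a scalar $c_\pi$, which is what must be computed.

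Two facts established above do the work: for any $h\in\HH(G)$ one has $\pi([A_{\delta_r}](h))=\pi([A_{\delta_r}])\,\pi(h)$ (the endomorphism of $\HH(G)$ written $[A_{\delta_r}]$ is the operator attached to the central element $[A_{\delta_r}]$ under the isomorphism $\mathcal Z(G)\simeq\mathrm{End}_{\HH(G)^2}(\HH(G))$ used in Proposition \ref{thm2r}), and $\pi([A_{\delta_r}])v=\lim_{Y\in\Omega}\pi([A^Y_{\delta_r}])v$ for $v\in V$ by Proposition \ref{stabrv}. Suppose $\pi$ has depth $\leq r$. Then Moy--Prasad theory supplies a point $x$ of the building with $V^{(G_x)_r^+}\neq 0$; translating $x$ into the closure of the fundamental alcove fixed by $I$ we may assume $G_x=P$ is standard, so we may pick $0\neq v\in V^{P_r^+}$, i.e.\ $\pi(\delta_{P_r^+})v=v$. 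Combining this with $[A_{\delta_r}](\delta_{P_r^+})=\delta_{P_r^+}$ (Remark \ref{rmkevalr}, the case $h=\delta_r$ of Proposition \ref{evalr}) gives $c_\pi v=\pi([A_{\delta_r}])\pi(\delta_{P_r^+})v=\pi\bigl([A_{\delta_r}](\delta_{P_r^+})\bigr)v=\pi(\delta_{P_r^+})v=v$, hence $c_\pi=1$.

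Suppose instead $\pi$ has depth $>r$. Then Moy--Prasad theory gives $V^{(G_x)_r^+}=0$ for \emph{every} point $x$, so $\pi(\delta_{P_r^+})=0$ for every $P\in Par$ (recall $\pi(\delta_{P_r^+})$ is the projection onto $V^{P_r^+}$); conjugating, $\pi\bigl(Av^{Y_P}(\delta_{P_r^+})\bigr)=0$ for all $P$ and all $Y$, hence $\pi([A^Y_{\delta_r}])=0$ for every $Y\in\Omega$, and therefore $c_\pi v=\pi([A_{\delta_r}])v=\lim_{Y}\pi([A^Y_{\delta_r}])v=0$, i.e.\ $c_\pi=0$. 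Thus $[A_{\delta_r}]$ and $z^r$ agree as functions on $Irr(G)$, whence $[A_{\delta_r}]=z^r\in\ZZ^r(G)$. The analytic content — existence of the limiting operators and centrality of $[A_{\delta_r}]$ — has already been absorbed into Theorem \ref{thm1r} and Proposition \ref{thm2r}, so what remains is light; I expect the only real care to be the Moy--Prasad bookkeeping, namely the identification $P_r^+=(G_x)_r^+$ for $P=G_x$ standard together with the characterization of depth in terms of fixed vectors under the groups $(G_x)_r^+$. As an alternative one may observe that the construction of $[A_{\delta_r}]$ specializes the explicit depth-$r$ Bernstein projector of \cite{BKV,BKV2} and invoke that construction directly.
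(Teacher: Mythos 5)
Your proposal is correct and follows essentially the same route as the paper: both arguments split into the depth $\leq r$ case, handled by combining $[A_{\delta_r}](\delta_{P_r^+})=\delta_{P_r^+}$ (Remark \ref{rmkevalr}) with a nonzero $P_r^+$-fixed vector, and the depth $>r$ case, handled by writing $\pi([A^Y_{\delta_r}])$ as a finite signed sum of operators $\pi(\delta_{(G_x)_r^+})$, each of which vanishes, and passing to the limit via Proposition \ref{stabrv}. The only additions — the remark that $\delta_r$ is the unit of $A^r(G)$ and the explicit Moy--Prasad bookkeeping for moving $x$ into the standard alcove — are harmless refinements of what the paper leaves implicit.
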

\begin{proof}We will prove the proposition in 2 steps. 
    \begin{claim}
        For every $(\pi,V) \in Irr(G)_{\leq r}$, $[A_{\delta_r}]|_V= Id_V$
    \end{claim}
    Let $(\pi,V) \in Irr(G)_{\leq r}$. Then, $\exists \; P \in Par$ and $0\neq v \in V^{P_r^+}$ and to prove the claim, it is enough to show that $[A_{\delta_r}](v)=v$. Since $v \in V^{P_r^+}$, $\delta_{P_r^+}(v)=v$ and using remark \ref{rmkevalr}, we have
    \[
    [A_{\delta_r}](v)= [A_{\delta_r}](\delta_{P_r^+}(v))= \left([A_{\delta_r}](\delta_{P_r^+})\right)(v)= \delta_{P_r^+}(v)=v
    \]
    which finishes the proof of the claim. 
    \begin{claim}
        For every $(\pi,V) \in Irr(G)_{> r}$, $[A_{\delta_r}]|_V= 0$
    \end{claim}
    Let $v\in V$. Since $(\pi,V) \in Irr(G)_{> r}$, $V^{(G_x)_r^+}=0$ for any $x\in \X$. and hence $\delta_{(G_x)_r^+}(v)=0 \:\forall \: v\in V$ and $\forall \: x \in \X$. We will show that $[A_{\delta_r}](v)=0$. We know from proposition \ref{stabrv} that $\{[A^Y_h] (v)\}_{Y \in \Omega} $ stabilizes, and so if we choose $Y \in \Omega$ large enough, we have $[A_{\delta_r}](v)= [A^Y_{\delta_r}](v)$. Since $Ad_y(\delta_{P_r^+}) = \delta_{yP_r^+y^{-1}}=\delta_{(G_x)_r^+}$ for some $x\in \X$, we see from the structure of $[A^Y_{\delta_r}]$ that it is a finite sum of elements of the form $\delta_{(G_x)_r^+}$ with some signs in front of the terms, i.e., 
    \[
    [A^Y_{\delta_r}]= \sum_x (-1)^{r(G)-r(G_x)}\delta_{(G_x)_r^+}
    \]
    where $x$ runs over some finite set in $\X$. Hence,
    \[
    [A_{\delta_r}](v)= [A^Y_{\delta_r}](v)= \sum (-1)^{r(G)-r(G_x)}\delta_{(G_x)_r^+}(v)=0
    \]
    which proves the claim and completes the proof of the proposition. 
\end{proof}

\begin{remark}
    The projector that we define and construct here is same as the one constructed in \cite{BKV2} if $r$ is taken to be an integer. Note that the construction in \cite{BKV2} also works for $r\in \Q_{\geq0}\backslash \Z_{\geq 0}$. Our construction uses the ideas in \cite{BKV} and extends them to positive integers. 
\end{remark}

\begin{theorem}\label{main thm depth r}
    The image of the map $[A^r]$ lies in the depth-$r$ part, i.e., for each $h\in A^r(G)$, we have $[A_h]\in \mc Z^r(G)$, and 
        \begin{align*}
            [A^r]\: :\: A^r(G) &\longrightarrow \mc Z^r(G) \\
            h &\longmapsto [A_h]
        \end{align*} 
        is an algebra isomorphism onto the depth-$r$ part of the Bernstein center. 
\end{theorem}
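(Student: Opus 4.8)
The plan is to mirror the structure of the depth-zero argument (Theorem~\ref{main thm depth 0}) as closely as possible, since essentially all the analytic work has already been done. By Proposition~\ref{thm2r} we already know that $[A^r]$ is a well-defined algebra homomorphism into $\ZZ(G)$, so the theorem reduces to two assertions: (i) the image lands in $\ZZ^r(G)$, and (ii) $[A^r]$ is bijective. For (i), I would use the depth-$r$ projector: writing $\delta_r=\{\delta_{P_r^+}\}_{P\in Par}\in A^r(G)$, the preceding proposition identifies $[A_{\delta_r}]=z^r$, the projector onto $\ZZ^r(G)$. Since $h*\delta_r=h$ for every $h\in A^r(G)$ (because $\delta_{P_r^+}$ is the unit of $\M^r_P$), the algebra-homomorphism property from Proposition~\ref{thm2r} gives $[A_h]=[A_{h*\delta_r}]=[A_h]\circ[A_{\delta_r}]=[A_h]\circ z^r$, so $[A_h]$ factors through $z^r$ and hence lies in $\ZZ^r(G)$. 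This is the verbatim analogue of the last paragraph of the proof of Proposition~\ref{mainthm0}.

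For bijectivity I would construct an explicit two-sided inverse, exactly as in Propositions~\ref{sect} and \ref{inj}. First define, for each $P\in Par$, the evaluation map $\Psi^r_P:\ZZ^r(G)\to\M^r_P$ by $z\mapsto z_{\HH}(\delta_{P_r^+})$; this is well-defined because $P$ normalizes $P_r^+$ (so $z_\HH(\delta_{P_r^+})$ is $P_r^+$-bi-invariant and $P$-conjugation-invariant) and it is an algebra map since $z_\HH$ commutes with convolution and $\delta_{P_r^+}*\delta_{P_r^+}=\delta_{P_r^+}$. For $P\subset Q$ one checks $\Psi^r_P=\phi^r_{P,Q}\circ\Psi^r_Q$ using $z_\HH(\delta_{Q_r^+})*\delta_{P_r^+}=z_\HH(\delta_{Q_r^+}*\delta_{P_r^+})=z_\HH(\delta_{P_r^+})$ (here one needs $\delta_{Q_r^+}*\delta_{P_r^+}=\delta_{P_r^+}$ for $P\subset Q$, i.e. $Q_r^+\subset P_r^+$, which holds because passing to a larger parahoric shrinks the pro-unipotent radical and its congruence subgroups). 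Hence the $\Psi^r_P$ assemble to an algebra map $\Psi^r:=\lim_{P}\Psi^r_P:\ZZ^r(G)\to A^r(G)$.

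The identity $\Psi^r\circ[A^r]=\mathrm{id}$ is immediate from Remark~\ref{rmkevalr}: $\Psi^r_P([A_h])=[A_h](\delta_{P_r^+})=h_P$ for all $P$. For injectivity of $\Psi^r$ I would argue as in Proposition~\ref{inj}: if $\Psi^r(z)=\Psi^r(z')$ with $z,z'\in\ZZ^r(G)$, then for every $(\pi,V)\in Irr(G)_{\leq r}$ pick $P\in Par$ and $0\neq v\in V^{P_r^+}$, so that $\delta_{P_r^+}(v)=v$ and $z_V(v)=z_\HH(\delta_{P_r^+})(v)=\Psi^r_P(z)(v)=\Psi^r_P(z')(v)=z'_V(v)$; by Schur's lemma $z$ and $z'$ act by scalars on $V$, so these scalars agree on all depth-$\leq r$ irreducibles, forcing $z=z'$ (using that $z\mapsto f_z$ on $Irr(G)$ is injective, restricted to the depth-$\leq r$ block). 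Combining, $\Psi^r$ is a bijective algebra map and $[A^r]=(\Psi^r)^{-1}$, completing the proof.

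I do not expect a genuine obstacle here, since Theorem~\ref{thm1r} and Proposition~\ref{thm2r} have already absorbed all the real content; the only points requiring a moment of care are the elementary containment $Q_r^+\subset P_r^+$ for $P\subset Q$ (needed both for the compatibility of the $\Psi^r_P$ and implicitly in the inverse-system structure on $\{\M^r_P\}$) and the observation that every depth-$\leq r$ irreducible has nonzero $P_r^+$-fixed vectors for some standard $P$ (a consequence of Moy--Prasad theory together with conjugating any point of $\X$ into the fundamental alcove), which is exactly the input used in the depth-zero case.
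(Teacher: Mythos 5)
Your proposal is correct and follows essentially the same route as the paper: the containment in $\ZZ^r(G)$ via $[A_h]=[A_h]\circ[A_{\delta_r}]$ with $[A_{\delta_r}]$ the depth-$r$ projector, the construction of $\Psi^r=\lim_P\Psi^r_P$ with $\Psi^r_P(z)=z_\HH(\delta_{P_r^+})$, the identity $\Psi^r\circ[A^r]=\mathrm{Id}$ from Remark~\ref{rmkevalr}, and injectivity of $\Psi^r$ via Schur's lemma on $Irr(G)_{\leq r}$. The auxiliary points you flag (the containment $Q_r^+\subset P_r^+$ for $P\subset Q$ and the existence of nonzero $P_r^+$-fixed vectors for depth-$\leq r$ irreducibles) are exactly the inputs the paper uses implicitly.
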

\begin{proof}
    Let $h\in A^r(G)$. Then, $$[A_h]=[A^r](h)= [A^r](h*\delta_r)= [A_{h*\delta_r}]= [A_h]\circ [A_{\delta_r}]$$
    and hence $[A_h]\in \mc Z^r(G)$ since $[A_{\delta_r}]$ is the depth-$r$ projector. To prove that the map is an isomorphism, we will follow the ideas in the depth-zero case and do it in 2 steps. 
    \begin{claim}
            We have an algebra map 
    \begin{align*}
       \Psi^r \::\: \mc Z^r(G) \longrightarrow A^r(G)
    \end{align*}
    such that $\Psi^r \circ [A^r] = Id_{A^r(G)}$
    \end{claim}
The proof of this claim follows the steps of proposition \ref{sect}. For $P\in Par $, define 
    \begin{align*}
        \Psi^r_P \::\: \mc Z^r(G) &\longrightarrow \M^r_P\\
        z &\longmapsto z_\HH(\delta_{P_r^+})
    \end{align*}
    We can easily see that the map $\Psi^r_P$ is well-defined since $P_r^+$ is normal in $P$. Let $\pi^r_P:A^r(G) \rightarrow \M^r_P $ be the canonical projection map. For $P,\:Q \in Par $ such that $P\subset Q $, we have $\Psi^r_P= \phi^r_{P,Q} \circ \Psi^r_Q $. So, there exists a map $\Psi^r := \lim_{P\in Par}\Psi^r_P$ 
    \[
    \Psi^r : \ZZ^r(G) \longrightarrow A^r(G)
    \]
    such that $\pi^r_P \circ \Psi^r = \Psi_P$ for $P \in Par$. 
    Further, note that each $\Psi^r_P$ is an algebra map. Given $z,z' \in \ZZ^r(G)$,
    \begin{align*}
        (z\circ z')_\HH(\delta_{P_r^+}) &= z_\HH \circ {z'}_\HH(\delta_{P_r^+})=z_\HH({z'}_\HH(\delta_{P_r^+}*\delta_{P_r^+}))\\
        &= z_\HH(\delta_{P_r^+}*{z'}_\HH(\delta_{P_r^+})) =z_\HH(\delta_{P_r^+})*{z'}_\HH(\delta_{P_r^+})
    \end{align*}
    Hence, $\Psi^r : \ZZ^r(G) \rightarrow A^r(G)$ is an algebra map. Finally, note that for $h=\{h_P\}_{P\in Par} \in A^r(G)$, we have from remark \ref{rmkevalr} that  $\Psi^r_P([A_h])= [A_h](\delta_{P_r^+})= h_P$ $\forall \: P \in Par$. Hence, 
    \[
    \Psi^r \circ [A^r] (h) = \Psi^r([A_h])=h
    \]
    for $h=\{h_P\}_{P\in Par} \in A^r(G)$, and we are done.\par
    The above claim implies that $\Psi^r$ is surjective and $[A^r]$ is injective as algebra maps. Observe that similar to the depth-zero case, if we can show injectivity of $\Psi^r$, we can conclude that $\Psi^r$ and $[A^r]$ are inverse algebra isomorphisms.
    \begin{claim}
        $\Psi^r$ is injective. 
    \end{claim}
    Assume $\Psi^r (z) = \Psi^r (z')$ for $z,\:z' \in \ZZ^r(G)$ and let $(\pi,\; V) \in Irr(G)_{\leq r}$ . $\exists \:P \in Par $ such that $V^{P_r^+}\ni v \neq \{0\}$. Then $\delta_{P_r^+}(v)=v$. In order to show $z=z'$, it is enough to show $z_V(v) = {z'}_V(v)$, since by Schur's lemma $z|_V = f_z(\pi)Id_V$ for some $f_z \in Fun(\:Irr (G), \; \C)$. Note that $z_V(v)=z_V(\delta_{P_r^+}(v))=z_\HH(\delta_{P_r^+})(v)$ and the same is true for $z'$. Since $\Psi^r (z) = \Psi^r (z')$, we have 
    \[
    \pi^r_P \circ \Psi^r (z) = \pi^r_P \circ \Psi^r(z') \Rightarrow \Psi^r_P(z)=\Psi^r_P(z')\Rightarrow z_\HH(\delta_{P_r^+})= {z'}_\HH(\delta_{P_r^+})
    \]
    $\forall \; P \in Par$. Hence,
    \[
    z_V(v) = z_\HH(\delta_{P_r^+})(v)= {z'}_\HH(\delta_{P_r^+})(v)= {z'}_V(v)
    \]
    which proves injectivity of $\Psi^r$ and gives us an isomorphism. 
\end{proof}
\begin{remark}
As a corollary of the proof, we 
            observe that for a given $h=\{h_P\}_{P\in Par}\in A^r(G)$, $h_P$ lies in the center $\ZZ(\HH_{P_r^+})$ of $\HH_{P_r^+}$, where $\HH_{P_r^+}= \delta_{P_r^+} * \HH(G)* \delta_{P_r^+}$. We can even directly see this from our construction since for a given $f \in \HH_{P_r^+}$, proving $f*h_P=h_P*f$ reduces to showing $A^Y_h*f=f*A^Y_h$ for some $Y \in \Omega$ large enough, and this is true if we choose $Y$ to be invariant under $\text{supp}(f)$ which we can since the support is compact. 
        \end{remark}

\subsection{A limit description of the Bernstein center}
We have an isomorphisms $[A^r] : A^r(G) \longrightarrow \mc Z^r(G)$ for all non-negative integers $r$. For any $r\in \Z_{\geq 0}$, we have a map 
\begin{align*}
            e_{r+1} \:: \:A^{r+1}(G) &\longrightarrow  A^r(G) \\
            \{h_P\}_{P\in Par} &\longmapsto \{h_P*\delta_{P_r^+}\}_{P\in Par}
        \end{align*} 
We define 
\begin{equation}
    A(G) := \lim_{r\in \Z_{\geq 0}}A^r(G)
\end{equation}
where the limit is taken with respect to the maps $e_{r}$. Further, note that we have a natural map 
\begin{align*}
            z_{r+1} \:: \:\ZZ^{r+1}(G) &\longrightarrow  \ZZ^r(G) \\
            z &\longmapsto z \circ [A_{\delta_r}]
        \end{align*} 
        such that $\ZZ(G) = \lim_{r\in \Z_{\geq 0}}\ZZ^r(G)$. 

        \begin{theorem}\label{main theorem}
            The algebra isomorphisms $[A^r] : A^r(G) \rightarrow \mc Z^r(G)$ fits into the following commutative diagram 
             \[
             \xymatrix{A^{r+1}(G) \ar[r]^{[A^{r+1}]}\ar[d]^{e_{r+1}}&\ZZ^{r+1}(G)\ar[d]^{z_{r+1}}\\
        A^r(G)\ar[r]^{[A^r]}&\ZZ^r(G)}
        \]
        In particular, we have an algebra isomorphism 
        \begin{equation}
            [A]=\lim_{r\in \Z_{\geq 0}}[A^r] : A(G) \longrightarrow \ZZ(G).
        \end{equation}
        \end{theorem}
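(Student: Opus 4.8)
The plan is to reduce the commutativity of the square to an identity between the evaluation maps, which then follows from results already proved, and to deduce the limit statement formally afterwards. Fix $r\in\Z_{\geq 0}$ and $h=\{h_P\}_{P\in Par}\in A^{r+1}(G)$; by Theorem \ref{main thm depth r} we have $[A_h]=[A^{r+1}](h)\in\ZZ^{r+1}(G)$, so $z_{r+1}([A_h])=[A_h]\circ[A_{\delta_r}]$ is defined and lies in $\ZZ^r(G)$ (here $[A_{\delta_r}]=z^r$ is the depth-$r$ Bernstein projector). Since $[A^r]$ and $\Psi^r$ are mutually inverse algebra isomorphisms, proving $z_{r+1}\circ[A^{r+1}]=[A^r]\circ e_{r+1}$ is equivalent to proving $\Psi^r\circ z_{r+1}\circ[A^{r+1}]=e_{r+1}$, and since $\pi^r_P\circ\Psi^r=\Psi^r_P$ for each $P$, this amounts to
$$\Psi^r_P\bigl(z_{r+1}([A_h])\bigr)=h_P*\delta_{P^+_r}=\bigl(e_{r+1}(h)\bigr)_P\qquad\text{for all }P\in Par.$$

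The first step is to unwind the left-hand side. Under the isomorphism $\ZZ(G)\simeq\End_{\HH(G)^2}(\HH(G))$ of Proposition \ref{thm2r}, the element $[A_h]$ acts as the endomorphism $f\mapsto[A_h](f)$, and composition in $\ZZ(G)$ is composition of endomorphisms; hence
$$\Psi^r_P\bigl(z_{r+1}([A_h])\bigr)=\bigl([A_h]\circ[A_{\delta_r}]\bigr)(\delta_{P^+_r})=[A_h]\bigl([A_{\delta_r}](\delta_{P^+_r})\bigr).$$
Applying the evaluation formula — Proposition \ref{eval} when $r=0$, Remark \ref{rmkevalr} when $r>0$ — to $\delta_r=\{\delta_{P^+_r}\}_{P\in Par}\in A^r(G)$ gives $[A_{\delta_r}](\delta_{P^+_r})=\delta_{P^+_r}$, so the expression reduces to $[A_h](\delta_{P^+_r})$, where now $[A_h]$ is the depth-$(r+1)$ operator attached to $h$.

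The second step is to compute $[A_h](\delta_{P^+_r})$. Because $P^+_{r+1}\subseteq P^+_r$ are open compact subgroups one has $\delta_{P^+_{r+1}}*\delta_{P^+_r}=\delta_{P^+_r}$, and $[A_h]$ commutes with right convolution (that is, $[A_h](f*g)=[A_h](f)*g$), so
$$[A_h](\delta_{P^+_r})=[A_h]\bigl(\delta_{P^+_{r+1}}*\delta_{P^+_r}\bigr)=[A_h](\delta_{P^+_{r+1}})*\delta_{P^+_r}=h_P*\delta_{P^+_r},$$
where the last equality is the depth-$(r+1)$ evaluation formula (Remark \ref{rmkevalr} with $r+1$ in place of $r$). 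This is precisely $(e_{r+1}(h))_P$, so the square commutes.

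For the final assertion, the commuting squares exhibit $\{[A^r]\}_{r\geq 0}$ as an isomorphism of inverse systems $\{A^r(G),e_r\}\to\{\ZZ^r(G),z_r\}$; taking inverse limits and using $A(G)=\lim_r A^r(G)$ together with $\ZZ(G)=\lim_r\ZZ^r(G)$ produces the algebra isomorphism $[A]=\lim_r[A^r]:A(G)\to\ZZ(G)$, whose inverse is $\lim_r\Psi^r$ (the $\Psi^r$ commute with the transition maps because the $[A^r]$ do). I do not anticipate a serious obstacle: the only delicate point is that the depth-$(r+1)$ construction of $[A_\bullet]$ must be matched with the depth-$r$ one, and this comparison is exactly what the elementary identity $\delta_{P^+_{r+1}}*\delta_{P^+_r}=\delta_{P^+_r}$ and the already-established evaluation formulas supply.
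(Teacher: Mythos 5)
Your proof is correct, but it takes a different route from the paper's. The paper verifies the identity $[A_{e_{r+1}(h)}]=[A_h]\circ[A_{\delta_r}]$ representation-by-representation: for each irreducible $(\pi,V)$ it splits into the cases depth $\leq r$, depth in $(r,r+1]$, and depth $>r+1$, picks a suitable fixed vector $v$ in $V^{P_r^+}$ or $V^{P_{r+1}^+}$, and evaluates both sides on $v$ using the formulas $[A_{h'}](\delta_{P_r^+})=h'_P$ and $\delta_{P_r^+}(v)=0$ in the positive-depth case. You instead apply the inverse isomorphism $\Psi^r$ and reduce everything to the single Hecke-algebra identity $\Psi^r_P\bigl([A_h]\circ[A_{\delta_r}]\bigr)=[A_h](\delta_{P_r^+})=[A_h](\delta_{P_{r+1}^+})*\delta_{P_r^+}=h_P*\delta_{P_r^+}$, which follows from the depth-$(r+1)$ evaluation formula of Remark \ref{rmkevalr}, the right-equivariance $[A_h](f*g)=[A_h](f)*g$, and the elementary containment $P_{r+1}^+\subseteq P_r^+$. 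What your approach buys is the elimination of the case analysis on depths and of any appeal to Schur's lemma beyond what is already packaged into the injectivity of $\Psi^r$; what the paper's approach buys is a direct illustration of how the transition map $z_{r+1}$ acts on representations, which makes the compatibility with the depth filtration of $R(G)$ visible. Both arguments rest on the same previously established inputs (Theorem \ref{main thm depth r} and the evaluation formulas), and your formal deduction of the limit isomorphism at the end matches the paper's.
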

        \begin{proof}
            Let $h = \{h_P\}_{P\in Par}\in A^{r+1}(G) $ and $h'=\{h_P*\delta_{P_r^+}\}_{P\in Par} = e_{r+1}(h)\in A^r(G)$. We are trying to show that $[A_{h'}]= [A_h]\circ [A_{\delta_r}]$. Let $(\pi, V)$ be a smooth irreducible representation of $G(\mathbbm k)$. If the depth of $\pi$ is $\leq r$, then  $\exists \: 0\neq v\in V^{P_r^+}$ for some $P\in Par$. In this case, \[
            [A_{h'}](v)=[A_{h'}](\delta_{P_r^+}(v))= [A_{h'}](\delta_{P_r^+})(v)= (h_P*\delta_{P_r^+})(v)=h_P(v)\]
            and \[
            [A_h]\circ [A_{\delta_r}](v)=[A_h](v)=[A_{h}](\delta_{P_{r+1}^+}(v))=[A_{h}](\delta_{P_{r+1}^+})(v)=h_P(v).
            \]
            So, we have $[A_{h'}]|_V=([A_h]\circ [A_{\delta_r}])|_V$. \par 
            In the case when the depth of $\pi $ is $>r$, we observe that both $[A_h]\circ [A_{\delta_r}]$ and $[A_{h'}]$ act by zero. The rest of the proof follows immediately. 
        \end{proof}
        \begin{remark}
            Consider the natural inclusion $\ZZ^r(G)\xhookrightarrow{i_r}\ZZ^{r+1}(G)$. Let $z \in \ZZ^r(G)$ be such that $z = [A^r]^{-1}(h)= [A_h]$ for $h=\{h_P\}_{P\in Par}\in A^r(G)$. Then $[A^{r+1}]^{-1}\circ i_r(z)=f \in A^{r+1}(G)$, where $f= \{f_P\}_{P\in Par}= \{A^{Y(P_1^+)}_h*\delta_{P_{r+1}^+}\}_{P\in Par}$. This is because $f_P= [A_h](\delta_{P_{r+1}^+})=A^{Y(P_1^+)}_h*\delta_{P_{r+1}^+} $ by Proposition \ref{stabr}. So the inclusion $j_r:=[A^{r+1}]^{-1}\circ i_r \circ [A^r]^{-1} : A^r(G) \rightarrow A^{r+1}(G)$ is given by 
            \begin{align*}
                 A^r(G) &\xlongrightarrow{j_r} A^{r+1}(G)\\
                \{h_P\}_{P\in Par}&\longmapsto \{A^{Y(P_1^+)}_h*\delta_{P_{r+1}^+}\}_{P\in Par}
            \end{align*}
            Note that $e_{r+1}\circ j_r =\text{Id}_{A^r(G)}$. 
        \end{remark}
        
\section{Stable functions}\label{stable functions}
In this section we introduce and study stable functions on finite reductive groups and finite reductive Lie algebras.

Let 
$k=\bar{\mathbb {F}}_q$. 
For the rest of the paper we fixed a square root $q^{-1/2}$ of $q$
and a non-trivial additive character $\psi:\mathbb {F}_q\to\mathbb C^\times$.
We also assume $p>>0$.

\subsection{Stable functions on finite reductive groups}
Let $\rG$ be a connected reductive group over $k$.
Let $ F:\rG\to\rG$ be the (geometric) Frobenious endomorphism associated to a $\mathbb F_q$-rational structure. Let $(\rT,\rB)$ be a 
$F$-stable Borel pair of $\rG$ and let $\rW=N(\rT)/\rT$
be the Weyl group. 
Let $par$ be the set of $F$-stable  parabolic subgroup $\rP\supset \rB$. 
For any $\rP\in par$ we denote by $\rU_P$ its unipotent radical and $\rL_\rP=\rP/\rU_P$ its Levi quiteint. The image of $\rT$ along 
$\rT\to\rB\to\rP\to\rL_\rP$ is a maximal torus
of $\rT_\rP\subset\rL_\rP$ and we write 
$\rW_\rP=N(\rT_\rP)/\rT_\rP$ the corresponding Weyl group.
We denote by ${\rG}^F, {\rB}^F$, etc the $F$-fixed points of $\rG$, $\rB$, etc.

Let $C(\rG^F)$ be the space of class functions on 
$\rG^F$ equipped with the  convolution product
\begin{equation}\label{convolution}
f\star f'(x)=\sum_{y\in\rG^F}f(xy^{-1})f'(y)   
\end{equation}
For any $\rG^F$-conjugation invariant subset 
$S\subset \rG^F$ we denote by $C(S)$ the set of $\rG^F$-conjugation invariant functions on $S$.
For any  $\rP\in par$, we have the parabolic restriction map
$\mathrm{res}^\rG_{\rL_\rP}:C(\rG^F)\to C(\rL_\rP^F)$
defined by  
\begin{equation}
\mathrm{res}^\rG_{\rL_\rP}(f)(\bar l)=\sum_{u\in \rU^F_\rP}f(lu)
\end{equation}
where $l\in\rP$ is a lift of $\bar l\in \rL_\rP$.
We define the normalized parabolic restriction as 
\[\Res^\rG_{\rL_\rP}=|\rU^F_\rP|^{-1}\mathrm{res}^{\rG}_{\rL_\rP}:C(\rG^F)\to C(\rL_\rP^F).\]

For any pair $\rP\subset \rQ\in par$ of $F$-stable standard parabolic subgroups the quotient 
\[\rB_{\rP,\rQ}:=\rP/\rU_\rQ\subset\rL_\rQ=\rQ/\rU_\rQ\]
is a $F$-stable parabolic subgroup with 
unipotent radical
$\rU_{\rP,\rQ}=\rU_\rP/\rU_\rQ$ and
Levi quoteint $\rL_\rP$.
We have the following transitivity propoerty: We have an equality
    \begin{equation}
    \mathrm{Res}^{\rL_\rQ}_{\rL_{\rP}}\circ\mathrm{Res}^\rG_{\rL_\rQ}=\mathrm{Res}^\rG_{\rL_\rP}
    \end{equation}

We recall the defintion of stable fucntions on $\rG^F$ following \cite[Section 6.5]{chen}.
Let $\hat\rG$ be the dual group of $\rG$
over $k$ introduced in \cite{DL76}. It has a
canonical Frobenious endomorphism $F:\hat\rG\to\hat\rG$ and a $F$-stable mamixal torus $\hat\rT$.
According to \cite[Section 5.6]{DL76}, the set of $F$-stable semisimple conjugacy classes of $\hat\rG$ are in bijection with the set 
$(\hat\rT//\rW)^F$ of $F$-fixed points of the GIT quotient $\hat\rT//\rW$. The main results in \cite{DL76} implies that there is a surjective map
\begin{equation}\label{DL map}
    \mathcal L:\mathrm{Irr}(\rG^F)\to(\hat\rT//\rW)^F
\end{equation}
where $\mathrm{Irr}(\rG^F)$ is the set of isomorphism classes of irreducible complex representations of $\rG^F$.
We will call the fiber $\mathcal L^{-1}(\theta)$
of $\theta\in(\hat\rT//\rW)^F$  the Deligne-Lusztig packet associated to $\theta$.

For each $f\in C(\rG^F)$
and $(\pi,V)\in\mathrm{Irr}(\rG^F)$, Schur's lemma implies that 
\[\sum_{g\in\rG^F
} f(g)\pi(g)=\gamma_f(\pi)\mathrm{Id}_V\in\mathrm{End}(V)\]
for some complex number $\gamma_f(\pi)\in\mathbb C$.

\begin{definition}
A function $f\in C(\rG^F)$ is called \emph{stable} if it satisfies the following property: for any $(\pi,V), (\pi',V')\in\mathrm{Irr}(\rG^F)$ 
we have 
\[\gamma_f(\pi)=\gamma_f(\pi')\ \ \text{if}\ \ \mathcal{L}(\pi)=\mathcal{L}(\pi').\]
We denote by $C^{st}(\rG^F)$ the space of stable functions on $\rG^F$.

\end{definition}

Let $\mathbb C[(\hat{\rT}//\rW)^F]$ be the space of complex valued functions on  $(\hat{\rT}//\rW)^F$
with multiplication given by multiplication of functions.  For any $\rP\subset\rQ\in par$, 
we have a canonical Levi decomposition 
\[\rL_\rQ\cong\rL_\rP\ltimes\rU_{\rP,\rQ}\]
where we identify $\rL_\rP$ as the standard Levi subgroup containing the maximal torus $\rT_\rQ$.
The Levi decomposition induces a
 a natural inclusion $\rW_\rP\subset\rW_\rQ$
and a 
canoincal $\rW_\rP$-equivaraint 
isomorphism 
$\rT_\rP\cong\rT_\rQ$. 
It induces a  $\rW_\rP$-equivariant 
map 
$\hat\rT_\rP\cong\hat\rT_\rQ$ compatible with 
the Frobenious endomorphism and 
we denote by 
\[\mathrm{res}^{\hat\rT_\rQ}_{\hat\rT_\rP}:\mathbb C[(\hat{\rT}_\rQ//\rW_\rQ)^F]\to \mathbb C[(\hat{\rT}_\rP//\rW_\rP)^F]\]
the map given by pull back along the natural 
map $(\hat{\rT}_\rP//\rW_\rP)^F\to(\hat{\rT}_\rQ//\rW_\rQ)^F$.

\begin{proposition}\label{main thm stable group}
    \begin{enumerate}
         \item There is an algebra isomorphism
        \[
        \mathbb C[(\hat{\rT}//\rW)^F]\cong C^{st}(\rG^F)
        \]
        sending 
        the characteristic function 
        $\mathbbm 1_\theta$ of $\theta\in(\hat{\rT}//\rW)^F$ to the 
        idempotent projector $f_\theta\in C^{st}(\rG^F)$
        for the Deligne-Lusztig packet 
        $\mathcal{L}^{-1}(\theta)$, that is,
        we have $\gamma_{f_\theta}(\pi)=1$
        if $\mathcal L(\pi)=\theta$, otherwise
        $\gamma_{f_\theta}(\pi)=0$.
        
        \item For any  $\rP\subset\rQ\in par$ and $f\in C^{st}(\rL_\rQ^F)$, we have $\mathrm{res}^{\rL_\rQ}_{\rL_\rP}(f)\in C^{st}(\rL_{\rP}^F)$ and there is a commutative diagram
        \[\xymatrix{\mathbb C[(\hat{\rT}_\rQ//\rW_\rQ)^F]\ar[r]\ar[d]^{\mathrm{res}^{\hat\rT_\rQ}_{\hat\rT_\rP}}&C^{st}(\rL_\rQ^F)\ar[d]^{\mathrm{res}^{\rL_\rQ}_{\rL_{\rP}}}\\
        \mathbb C[(\hat{\rT}_\rP//\rW_\rP)^F]\ar[r]&C^{st}(\rL_\rP^F)}.\]
      
    \end{enumerate}

\end{proposition}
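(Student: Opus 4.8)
The plan is to deduce both statements from two facts: the structure of $(C(\rG^F),\star)$ as a product of copies of $\mathbb C$, and the compatibility of Harish–Chandra induction with the partition of $\mathrm{Irr}$ by Deligne–Lusztig series.

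For (1), I would first record that $f\mapsto(\gamma_f(\pi))_{\pi\in\mathrm{Irr}(\rG^F)}$ is an algebra isomorphism $(C(\rG^F),\star)\xrightarrow{\sim}\prod_{\pi}\mathbb C$. It is an algebra map because substituting $g=xy$ in $\sum_g(f\star f')(g)\pi(g)$ gives $\gamma_{f\star f'}(\pi)=\gamma_f(\pi)\gamma_{f'}(\pi)$; it is injective because $\gamma_f(\pi)=0$ for all $\pi$ forces $\sum_g f(g)g$ to act by zero in the regular representation, so $f=0$; and it is surjective by a dimension count, both sides having dimension the number of $\rG^F$-conjugacy classes. (Equivalently, $(C(\rG^F),\star)$ is the center of $\mathbb C[\rG^F]$ and $\gamma$ is the tuple of central characters.) By the definition of stability, $C^{st}(\rG^F)$ is the preimage of the subalgebra of tuples $(c_\pi)$ with $c_\pi$ depending only on $\mathcal L(\pi)$; since $\mathcal L$ is surjective onto $(\hat\rT//\rW)^F$ by \cite{DL76}, pullback along $\mathcal L$ identifies this subalgebra with $\mathbb C[(\hat\rT//\rW)^F]$, and $\mathbbm 1_\theta$ corresponds to the tuple supported on $\mathcal L^{-1}(\theta)$, i.e.\ to the element $f_\theta$ with $\gamma_{f_\theta}(\pi)=1$ for $\mathcal L(\pi)=\theta$ and $0$ otherwise. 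This gives (1), and the $f_\theta$ form a basis of $C^{st}(\rG^F)$.

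For (2) it suffices to treat the case $\rQ=\rG$, the general case following by applying the result to the reductive group $\rL_\rQ$ with its parabolic $\rB_{\rP,\rQ}$ (Levi quotient $\rL_\rP$, unipotent radical $\rU_{\rP,\rQ}$). Write $\rL=\rL_\rP$, let $\sigma\in\mathrm{Irr}(\rL^F)$, let $\tilde\sigma$ be its inflation to $\rP^F$, and let $\mathrm{Ind}_\rP^\rG\tilde\sigma$ be the induced representation of $\rG^F$. Unwinding the definition of $\mathrm{res}^\rG_\rL$ and using that $\chi_{\tilde\sigma}$ is constant on $\rU_\rP^F$-cosets gives $\gamma_{\mathrm{res}^\rG_\rL(f)}(\sigma)\dim\sigma=\sum_{p\in\rP^F}f(p)\chi_{\tilde\sigma}(p)$; combining this with the standard identity
$$\sum_{g\in\rG^F}f(g)\,\chi_{\mathrm{Ind}_\rP^\rG\tilde\sigma}(g)=[\rG^F:\rP^F]\sum_{p\in\rP^F}f(p)\chi_{\tilde\sigma}(p)$$
for class functions $f$, and decomposing $\mathrm{Ind}_\rP^\rG\tilde\sigma$ into irreducibles, yields
$$\gamma_{\mathrm{res}^\rG_\rL(f)}(\sigma)\dim\sigma=\frac{|\rP^F|}{|\rG^F|}\sum_{\pi\in\mathrm{Irr}(\rG^F)}[\mathrm{Ind}_\rP^\rG\tilde\sigma:\pi]\,\gamma_f(\pi)\dim\pi.$$

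Now take $f=f_\theta$; the right-hand side becomes $\tfrac{|\rP^F|}{|\rG^F|}$ times $\sum_{\pi:\,\mathcal L(\pi)=\theta}[\mathrm{Ind}_\rP^\rG\tilde\sigma:\pi]\dim\pi$. The one input that is not bookkeeping — and the step I expect to be the crux — is that, by \cite{DL76}, Harish–Chandra induction along the $F$-stable parabolic $\rP$ preserves Deligne–Lusztig series, so every irreducible constituent of $\mathrm{Ind}_\rP^\rG\tilde\sigma$ has the same $\mathcal L$-value, namely the image $\bar\eta\in(\hat\rT//\rW)^F$ of $\eta:=\mathcal L_\rL(\sigma)\in(\hat\rT_\rP//\rW_\rP)^F$; concretely, the dual Levi of $\rL$ in $\hat\rG$ has Weyl group $\rW_\rP$, and the inclusion of its semisimple classes into those of $\hat\rG$ is exactly the map $(\hat\rT_\rP//\rW_\rP)^F\to(\hat\rT//\rW)^F$. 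Granting this, the sum above equals $\dim(\mathrm{Ind}_\rP^\rG\tilde\sigma)=[\rG^F:\rP^F]\dim\sigma$ when $\bar\eta=\theta$ and is $0$ otherwise, so $\gamma_{\mathrm{res}^\rG_\rL(f_\theta)}(\sigma)$ is $1$ if $\bar\eta=\theta$ and $0$ if not. Since this depends only on $\mathcal L_\rL(\sigma)$ we get $\mathrm{res}^\rG_\rL(f_\theta)\in C^{st}(\rL^F)$, and as the $f_\theta$ span $C^{st}(\rG^F)$ the restriction map preserves stability in general. Moreover $\mathrm{res}^\rG_\rL(f_\theta)=\sum_{\eta\mapsto\theta}f^{\rL}_\eta$, which under the isomorphisms of (1) is exactly the pullback of $\mathbbm 1_\theta$ along $(\hat\rT_\rP//\rW_\rP)^F\to(\hat\rT//\rW)^F$; this is the asserted commutative square. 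Two cautions I would flag: it is the unnormalized $\mathrm{res}$ (not $\mathrm{Res}=|\rU_\rP^F|^{-1}\mathrm{res}$) that makes the square commute on the nose, since passing to $\mathrm{Res}$ scales the above $\gamma$-value by $|\rU_\rP^F|^{-1}$; and the formal ingredients (Frobenius reciprocity identifying the adjoint of $\mathrm{res}^\rG_\rL$ with induction of an inflation, and the behavior of characteristic functions under pullback) should be stated carefully but are routine.
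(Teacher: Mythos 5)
Your argument is correct, and for part (2) it is genuinely more self-contained than the paper's. The paper disposes of (1) with ``follows from the definition of stable functions'' --- which is exactly the content you spell out: $(C(\rG^F),\star)$ is the center of $\mathbb C[\rG^F]$, hence $\prod_{\pi}\mathbb C$ via central characters, and $C^{st}$ is the preimage of the tuples constant on fibers of the surjection $\mathcal L$ --- so there you agree with the paper. For part (2) the paper simply cites \cite[Proposition 4.2.2(4)]{laumon}, whereas you give a direct character-theoretic proof: Frobenius reciprocity converts $\gamma_{\mathrm{res}^\rG_{\rL}(f_\theta)}(\sigma)$ into a weighted sum of $\gamma_{f_\theta}(\pi)$ over constituents $\pi$ of $\mathrm{Ind}_\rP^\rG\tilde\sigma$, and the only non-formal input is that Harish--Chandra induction along an $F$-stable parabolic sends the geometric Lusztig series of $\eta$ into that of $t_{\rP,\rQ}(\eta)$. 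That input is a real theorem and you are right to flag it as the crux; attributing it solely to \cite{DL76} is slightly loose (one needs the disjointness of geometric series from \cite{DL76} \emph{together with} transitivity of Lusztig induction, or equivalently the idempotent identity $e^\rG_s\,\mathbb C[\rG^F]e_{\rU_\rP^F}=\mathbb C[\rG^F]e_{\rU_\rP^F}e^{\rL}_s$, which is precisely what this paper establishes in its proof of Theorem \ref{vanishing group}). Your normalization remark is also correct and consistent with the rest of the paper: the unnormalized $\mathrm{res}$ matches the plain pullback $\mathrm{res}^{\hat\rT_\rQ}_{\hat\rT_\rP}$ exactly as the proposition states, and the factor $|\rU_{\rP,\rQ}^F|^{-1}$ that enters when one passes to $\Res$ is absorbed by the $\mu(P^+)^{-1}$ in the definition of $j_P$ in Lemma \ref{j_P}. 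What your route buys is independence from the external reference and an explicit identification $\mathrm{res}^{\rL_\rQ}_{\rL_\rP}(f_\theta)=\sum_{\eta\mapsto\theta}f^{\rL_\rP}_{\eta}$; what it costs is having to invoke (or reprove) the compatibility of series with induction, which the cited reference packages for you.
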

\begin{proof}
    Part (1) follows from the definition of stable functions. Part (2) is proved in \cite[Proposition 4.2.2 (4)]{laumon}.
\end{proof}

We have the following key vanishing properties of stable functions.
 \begin{theorem}\label{vanishing group}
     For any $f\in C^{st}(\rG^F)$
        and any $\rP\in par$, we have 
        \begin{equation}\label{vanishing}
            \sum_{u\in\rU^F_\rP} f(xu)=0\ \ \ \text{for all}\ \ x\notin\rP^F.
        \end{equation}
 \end{theorem}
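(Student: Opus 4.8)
\noindent The plan is to reduce the statement, via Proposition \ref{main thm stable group}(1), to an identity of idempotents in the Hecke algebra attached to $\rU_\rP$, and then to prove that identity by comparing actions on simple modules.

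Since $\{f_\theta\}_{\theta\in(\hat\rT//\rW)^F}$ is a basis of $C^{st}(\rG^F)$ by Proposition \ref{main thm stable group}(1), it suffices to prove \eqref{vanishing} for $f=f_\theta$. Identify $C(\rG^F)$ (with $\star$) with the center of the group algebra $\mathbb C[\rG^F]$ via $f\mapsto\sum_{g}f(g)g$; under this identification $f_\theta=\sum_{\pi\in\mathcal L^{-1}(\theta)}e_\pi$, the central idempotent cutting out the packet, $e_\pi$ being the primitive central idempotent of $\pi$. Put $e:=|\rU_\rP^F|^{-1}\mathbbm 1_{\rU_\rP^F}\in\mathbb C[\rG^F]$ and $\mathcal H_\rP:=e\,\mathbb C[\rG^F]\,e$. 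A direct check, using that $f_\theta$ is a class function and that $\rP$ normalises $\rU_\rP$, shows that the function $x\mapsto\sum_{u\in\rU_\rP^F}f_\theta(xu)$ on the left of \eqref{vanishing} corresponds to $|\rU_\rP^F|\cdot e f_\theta e\in\mathcal H_\rP$, which is bi-$\rU_\rP^F$-invariant and $\rP^F$-conjugation invariant. Now a bi-$\rU_\rP^F$-invariant element of $\mathbb C[\rG^F]$ is supported on $\rP^F$ if and only if it lies in $e\,\mathbb C[\rP^F]\,e$, and $l\mapsto le$ identifies $e\,\mathbb C[\rP^F]\,e$ with $\mathbb C[\rL_\rP^F]$. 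So the theorem for $f_\theta$ is equivalent to
\[ e f_\theta e\in\mathbb C[\rL_\rP^F]\subset\mathcal H_\rP. \]

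Since $\mathbb C[\rG^F]$ is semisimple, so is $\mathcal H_\rP$, and an element of $\mathcal H_\rP$ is determined by its action on the simple $\mathcal H_\rP$-modules; these are the nonzero spaces $V_\pi^{\rU_\rP^F}$, $\pi\in\mathrm{Irr}(\rG^F)$, which we regard as $\rL_\rP^F$-modules, namely the parabolic restrictions $r^\rG_{\rL_\rP}(\pi)$, and on which $e f_\theta e$ acts by the scalar $\delta_{\mathcal L(\pi),\theta}$. I would then exhibit the matching element of $\mathbb C[\rL_\rP^F]$ as $z:=\sum_{\sigma\in S_\theta}e_\sigma^{\rL_\rP}$, where $e_\sigma^{\rL_\rP}$ are the primitive central idempotents of $\mathbb C[\rL_\rP^F]$ and $S_\theta\subset\mathrm{Irr}(\rL_\rP^F)$ is the set of irreducible constituents of $r^\rG_{\rL_\rP}(\pi)$ as $\pi$ ranges over $\mathcal L^{-1}(\theta)$; under $\mathbb C[\rL_\rP^F]\hookrightarrow\mathcal H_\rP$ the element $z$ acts on $V_\pi^{\rU_\rP^F}$ by the $\rL_\rP^F$-equivariant projector onto its $S_\theta$-isotypic part. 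By construction, for $\pi\in\mathcal L^{-1}(\theta)$ every constituent of $r^\rG_{\rL_\rP}(\pi)$ lies in $S_\theta$, so $z$ acts as the identity; it remains to show that for $\pi\notin\mathcal L^{-1}(\theta)$ no constituent of $r^\rG_{\rL_\rP}(\pi)$ lies in $S_\theta$, so that $z$ acts as $0$. This is exactly the following \emph{Key Lemma}: if $\sigma\in\mathrm{Irr}(\rL_\rP^F)$ occurs in $r^\rG_{\rL_\rP}(\pi)$, then $\mathcal L(\pi)$ depends only on $\sigma$. Granting this, if $\sigma$ occurred in both $r^\rG_{\rL_\rP}(\pi)$ and $r^\rG_{\rL_\rP}(\pi')$ with $\pi'\in\mathcal L^{-1}(\theta)$, then $\mathcal L(\pi)=\mathcal L(\pi')=\theta$; this gives the remaining case, so $e f_\theta e=z\in\mathbb C[\rL_\rP^F]$ and the theorem follows.

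The Key Lemma is the only substantial point, and I expect it to be the main obstacle: it is the compatibility of Harish--Chandra (hence Lusztig) induction with the parametrisation of $\mathrm{Irr}$ by $F$-stable semisimple classes of the dual group, which is standard (Digne--Michel; Howlett--Lehrer). In the present framework it can also be read off from Proposition \ref{main thm stable group}(2): by that proposition $\mathrm{Res}^\rG_{\rL_\rP}(f_{\theta'})=\sum_{\eta\mapsto\theta'}f_\eta^{\rL_\rP}$ is the idempotent projector onto $\{\sigma:\mathcal L_{\rL_\rP}(\sigma)\mapsto\theta'\}$, while a short Frobenius-reciprocity computation with the defining formula $\mathrm{Res}^\rG_{\rL_\rP}(f_{\theta'})(\bar l)=|\rU_\rP^F|^{-1}\sum_{u\in\rU_\rP^F}f_{\theta'}(lu)$ shows that $\gamma_{\mathrm{Res}^\rG_{\rL_\rP}(f_{\theta'})}(\sigma)\ne 0$ precisely when $\sigma\in S_{\theta'}$; comparing yields $S_{\theta'}=\{\sigma:\mathcal L_{\rL_\rP}(\sigma)\mapsto\theta'\}$, whence the Key Lemma, since a fixed $\sigma$ lies in $S_{\theta'}$ for exactly one $\theta'$, the image of $\mathcal L_{\rL_\rP}(\sigma)$. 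Everything else --- semisimplicity of $\mathcal H_\rP$ and its module theory, the identification of $e\,\mathbb C[\rP^F]\,e$ with $\mathbb C[\rL_\rP^F]$, and the idempotent bookkeeping --- is routine. One could instead argue by induction on semisimple rank, using that for $\rP\subset\rQ$ the left side of \eqref{vanishing} vanishes when $x\notin\rQ^F$ and equals $|\rU_\rQ^F|\sum_{v\in\rU_{\rP,\rQ}^F}\mathrm{res}^\rG_{\rL_\rQ}(f)(v\bar x)$ when $x\in\rQ^F$ (together with Proposition \ref{main thm stable group}(2) for $\rL_\rQ$), reducing to the case of a maximal proper $\rP$; but that case still requires the Key Lemma, so I would present the Hecke-algebra argument directly.
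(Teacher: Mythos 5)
Your argument is correct, and it reaches the conclusion by a genuinely different (more module-theoretic) route than the paper's. Both proofs begin the same way, reducing to the packet projectors $f_\theta$ (the paper's $f_s$ for geometric Lusztig series span the same space), and both ultimately hinge on the same statement: the central idempotent $f_\theta$, multiplied by $e=|\rU_\rP^F|^{-1}\mathbbm 1_{\rU_\rP^F}$, lands in $e\,\mathbb C[\rP^F]\,e$ --- in the paper this is the identity $e^\rG_s e_{\rU_\rP^F}=e_{\rU_\rP^F}e^{\rL}_s=e^\rP_s$. The paper proves this identity geometrically, via the $\rG^F$-module-$\rL_\rP^F$ structure of $H^*_c(\tilde X_{\rU_\rP})\cong\overline{\Q}_l[\rG^F]e_{\rU_\rP^F}$ and the decomposition of its cohomology into Deligne--Lusztig inductions, and then concludes by an explicit trace computation against the regular representation (which vanishes off the identity). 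You instead invoke the semisimple structure of the Hecke algebra $e\,\mathbb C[\rG^F]\,e$, match $ef_\theta e$ against an explicit sum of idempotents $z\in\mathbb C[\rL_\rP^F]$ by comparing actions on the simple modules $V_\pi^{\rU_\rP^F}$, and isolate the one substantive input as the compatibility of Harish--Chandra restriction with the Deligne--Lusztig parametrisation (your Key Lemma). That input is indeed standard, and your derivation of it from Proposition \ref{main thm stable group}(2) is legitimate and non-circular: part (2) is cited from Laumon--Letellier and concerns only the values of $x\mapsto\sum_u f(xu)$ on $\rP^F$, whereas the theorem concerns $x\notin\rP^F$; your Frobenius-reciprocity computation showing $\gamma_{\Res^\rG_{\rL_\rP}(f_{\theta'})}(\sigma)\neq 0$ iff $\sigma\in S_{\theta'}$ works because the relevant multiplicities enter with non-negative coefficients. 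What your version buys is a cleaner separation of the formal bookkeeping (corner algebras, idempotents) from the one representation-theoretic fact being used; what the paper's version buys is a self-contained geometric proof of that fact via Deligne--Lusztig varieties, in a form that generalises directly to the transitivity arguments it needs for general parabolics.
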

     
\begin{proof}
 Let $(s)$ denote the geometric conjugacy class of a semisimple element $s \in {\hat{\rG}}^{F}$. Let $(\rT,F)$ be an $F$ stable torus and $(\hat{\rT},F)$ it's dual. Then, $\widehat{\rT^F}\simeq {\hat{\rT}}^{F}$ and if $(\rT, \theta)$ corresponds to $s \in {\hat{\rT}}^{F}$, the geometric conjugacy class of $(\rT, \theta)$ corresponds to the geometric conjugacy class $(s)$ of $s$  in $\hat{\rG}$.  We often also denote $R^\rG_{\rT}(\theta)$ by $R^\rG_{\hat{\rT}}(s)$. The irreducible representations occuring in the Deligne-Lusztig induction $R^\rG_{\rT'}(\theta ')$ for all $(\rT',\;\theta ')$ geometrically conjugate to $(\rT,\theta)$ form the geometric Lusztig series corresponding to the $F$-stable semisimple conjugacy class $(s)$ in $\hat{\rG}$.  Now, the vector space of stable central functions is generated by elements of the form
    \begin{equation}
        f_s=\sum_{\pi \in \varepsilon (\rG^F,(s))} \pi(1) \pi 
    \end{equation}
    where $\varepsilon (\rG^F,(s))$ is the geometric Lusztig series for $(\rG,F)$ corresponding to $(s)$ (cf \cite{laumon}). So, it is enough to prove the property for such funtions.  \par
We first prove the statement for $F$-stable Borel subgroup $\rB\subset \rG$, and then generalise to $\rP \in par$.  Observe that it is enough to prove the statement for a fixed $F$-stable $\rT\subset \rB$. This is because for any other such pair $(\rT',\rB')$, $\exists g \in \rG^F$ such that $\rT'=g\rT g^{-1}$ and $\rB'=g\rB g^{-1}$, and hence $\rU_{\rB'}=g\rU_\rB g^{-1}$. Let $\rT$ be an $F$-stable maximal torus and $\rB=\rT \rU_\rB \subset \rG$ be a Borel subgroup (not necessarily $F$-stable).  Let $e^\rG_{s}$ be the central idempotent in $\overline{\Q}_l[\rG^F]$ projecting onto the  Lusztig series corresponding to $s\in {\hat{\rG}}^{F}$. The Lusztig series corresponding to $s$ contains all the irreducible representations occuring in $R^\rG_{\hat{\rT}}(s')$ for all such $(\hat{\rT},s') $where $\hat{\rT}$ is an $F$ stable maximal torus in $\hat{\rG}$ and $s' \in {\hat{\rT}}^{F}$ is $\hat{\rG}$ conjugate to $s$. Let $S_{\rT,s}$ denote the set 
    $$S_{\rT,s}=\{ \theta \in \widehat{\rT^F}| (\rT,\theta) \text{ is in the geometric conjugacy class corresponding to } (s)\}, $$ 
    and $e^\rT_s$ denote the idempotent in $\overline{\Q}_l[\rT^F]$ which projects onto this set. Then, 
\begin{equation}
    [e^\rG_s H^*_c(\tilde{X}_{\rU_\rB})e^\rT_s]
    =[H^*_c(\tilde{X}_{\rU_\rB})e^\rT_s]= [e^\rG_s H^*_c(\tilde{X}_{\rU_\rB})]
\end{equation}
as $\rG^F$-module-$\rT^F$, where $\tilde{X}_{\rU_\rB}$ is the Deligne-Lusztig variety corresponding to $(\rT,\rB)$. This is because 
\[
H^*_c(\tilde{X}_{\rU_\rB}) = \bigoplus_{\theta \in \widehat{\rT^F}}H^*_c(\tilde{X}_{\rU_\rB})_{\theta}=  \bigoplus_{\theta \in \widehat{\rT^F}}R^\rG_{\rT}(\theta)
\]

and hence we have 
\[
e^\rG_s H^*_c(\tilde{X}_{\rU_\rB})= \bigoplus_{\theta \in S_{\rT,s}}R^\rG_{\rT}(\theta)= H^*_c(\tilde{X}_{\rU_\rB})e^\rT_s= e^\rG_s H^*_c(\tilde{X}_{\rU_\rB})e^\rT_s.
\]
Now if $\rT\subset \rB$ is $F$-stable, we know that 
\begin{equation}
    H^*_c(\tilde{X}_{\rU_\rB})\simeq \overline{\Q}_l[\rG^F/\rU_\rB^F]= \overline{\Q}_l[\rG^F]e_{\rU_\rB^F},
\end{equation}
where $e_{\rU_\rB^F}=|\rU_\rB^F|^{-1}\sum_{u\in \rU_\rB^F} u$. Thus, as $\rG^F$-module-$\rT^F$, we get that 
\begin{equation}\label{egseqn}
    \left[e^\rG_s\overline{\Q}_l[\rG^F]e_{\rU_\rB^F}\right]= \left[\overline{\Q}_l[\rG^F]e^\rG_se_{\rU_\rB^F}\right]=\left[\overline{\Q}_l[\rG^F]e_{\rU_\rB^F}e^\rT_s\right]
\end{equation}
since $e^\rG_{s}$ is a central idempotent in $\overline{\Q}_l[\rG^F]$. Note that from the structure of $e^\rT_s$, we know that $e_{\rU_\rB^F}e^\rT_s= e^\rT_s e_{\rU_\rB^F}=e^\rB_s$(say), since $\rT^F \rU_\rB^F=\rU_\rB^FT^F$. Further, $e^\rB_s = \sum_{b\in \rB^F}c_b b$ for some $c_b\in \overline{\Q}_l$. 
\begin{claim}\label{cl1B}
    $e^\rG_se_{\rU_\rB^F}= e_{\rU_\rB^F}e^\rT_s=e^\rB_s$ as operators from the right on $\overline{\Q}_l[\rG^F]$. 
\end{claim}
\[
\overline{\Q}_l[\rG^F]e_{\rU_\rB^F}= \overline{\Q}_l[\rG^F/\rU_\rB^F] = \bigoplus_{\theta \in \widehat{\rT^F}}\overline{\Q}_l[\rG^F/\rU_\rB^F]_{\theta}.\]
For $\phi \in \overline{\Q}_l[\rG^F]$, $\phi e_{\rU_\rB^F} = \sum_{\theta \in \widehat{\rT^F}} \phi_\theta$, where $\phi_\theta \in \overline{\Q}_l[\rG^F/\rU_\rB^F]_{\theta}$. Now, $e^\rT_s$ acts as identity on $\overline{\Q}_l[\rG^F/\rU_\rB^F]_{\theta}$ for $\theta \in S_{\rT,s}$ and zero otherwise. So, $\phi e_{\rU_\rB^F}e^\rT_s= \sum_{\theta \in S_{\rT,s}} \phi_\theta$. On the other hand, $\phi e^\rG_se_{\rU_\rB^F}= e^\rG_s\phi e_{\rU_\rB^F}$, and $e^\rG_s$ from the left acts as identity on $\overline{\Q}_l[\rG^F/\rU_\rB^F]_{\theta}$ for $\theta \in S_{\rT,s}$ and zero otherwise. Hence, $\phi e^\rG_se_{\rU_\rB^F}= e^\rG_s\phi e_{\rU_\rB^F}= \sum_{\theta \in S_{\rT,s}} \phi_\theta$, and we have proved our claim. \par
Note that, for $g\in \rG^F$
\begin{equation}
     Tr\left(\overline{\Q}_l[\rG^F]\Big|e^\rG_se_{\rU_\rB^F}g^{-1}\right) = \sum_{u\in \rU_\rB^F}\sum_{\pi \in \varepsilon (\rG^F,(s))}\pi(1)\pi(gu)  =\sum_{u\in \rU_\rB^F}f_s(gu)  
\end{equation}
Thus, to prove our proposition, it is enough to show that the above is zero for $g\not\in \rB^F$.Using claim \ref{cl1B}, we see that for $g\not\in \rB^F$
\begin{align*}
   Tr\left(\overline{\Q}_l[\rG^F]\Big|e^\rG_se_{\rU_\rB^F}g^{-1}\right) &=   Tr\left(\overline{\Q}_l[\rG^F]\Big|e_{\rU_\rB^F}e^\rT_sg^{-1}\right)\\
   &=Tr\left(\overline{\Q}_l[\rG^F]\Big|e^\rB_sg^{-1}\right)\\
   &=\sum_{b\in \rB^F}c_b\chi_{reg}(gb^{-1})=0
\end{align*}
where $\chi_{reg}$ is the character of the regular representation of $\rG^F$. Hence, we are done in the case of Borel subgroups. \par
Observe that the main step was the fact that
    \[
     \left[e^\rG_s\overline{\Q}_l[\rG^F]e_{\rU_\rB^F}\right]= \left[\overline{\Q}_l[\rG^F]e^\rG_se_{\rU_\rB^F}\right]=\left[\overline{\Q}_l[\rG^F]e_{\rU_\rB^F}e^\rT_s\right]
    \]
    Let $(\rT, \theta)$ corresponds to $s \in {\hat{\rT}}^{F}$ and $t \in {\hat{\rT}}^{F}$. If $t \in (s)$, then we have $e^\rT_t=e^\rT_s$ and \[
     [e^\rG_s H^*_c(\tilde{X}_U)e^\rT_t]
    =[H^*_c(\tilde{X}_U)e^\rT_t]= [e^\rG_s H^*_c(\tilde{X}_U)].
    \]
    On the other hand, if $t \not \in (s)$,then we have $$[e^\rG_s H^*_c(\tilde{X}_U)e^\rT_t]=0.$$ Note further that $S_{\rT,s}$ is the set of all $\theta' \in \widehat{\rT^F}$ such that $(\rT, \theta')\leftrightarrow t \in {\hat{\rT}}^{F}$ for some $t \in (s)$. 
    \par
     Now, we move onto the case of general $\rP \in par$. Let $ \rP\subset \rG $ be a parabolic subgroup (not necessarily $F$ stable), $\rP= \rU_\rP\rtimes \rL_{\rP} $ for $F$ stable Levi subgroup  $\rL_{\rP}$ containing an $F$-stable torus $\rT$ and unipotent radical $\rU_\rP=R_u(\rP)$. Note that the Deligne Lusztig varieties and induction is defined for $\rL_{\rP}^F$, and we have a similar inuction functor $R^\rG_\rL$ (cf for example chapter 9 in \cite{digne}). Let $\tilde{X}_{\rU_\rP}$ denote the Deligne Lusztig variety for $\rP$ and $H^*_c(\tilde{X}_{\rU_\rP})$ is now a $\rG^F$-module-$\rL_{\rP}^F$. Let $\hat{\rL}_{\rP}$ denote the dual of $\rL_{\rP}$. Any semisimple class $(t)$ in $\hat{\rL}_{\rP}$ gives rise to a semisimple conjugacy class $(s)$ in $\hat{\rG}$, and the map $(t) \mapsto (s)$ is finite to one. Let $e^\rL_t$ be defined similarly to $e^\rG_s$, since $\rL_{\rP}\supset \rT$ is a reductive group. Using the ideas in \cite{dat} section 2.1.4 and the fact that the Deligne-Lusztig induction is transitive we see that 
 \begin{equation}
      [e^\rG_s H^*_c(\tilde{X}_{\rU_\rP})e^\rL_t]
    =[H^*_c(\tilde{X}_{\rU_\rP})e^\rL_t]
 \end{equation}
We also have $[e^\rG_s H^*_c(\tilde{X}_{\rU_\rP})e^\rL_{t'}]=0$ if $t' \not\in (s)$. So, if we define $e^\rL_s := \sum_{(t)\mapsto (s)} e^\rL_t$, we see that 
 \begin{equation}
      [e^\rG_s H^*_c(\tilde{X}_{\rU_\rP})e^\rL_s]
    =[H^*_c(\tilde{X}_{\rU_\rP})e^\rL_s]=  [e^\rG_s H^*_c(\tilde{X}_{\rU_\rP})]
 \end{equation}
 Coming back to our particular case when $\rP$ is $F$-stable, we again have 
 \[
  H^*_c(\tilde{X}_{\rU_\rP})\simeq \overline{\Q}_l[\rG^F/\rU_\rP^F]= \overline{\Q}_l[\rG^F]e_{\rU_\rP^F}
 \]
 where everything is defined similarly to the Borel case. Defining $e^\rP_s:=e^\rL_se_{\rU_\rP^F}=e_{\rU_\rP^F}e^\rL_s$ in the same way, and using ideas in claim \ref{cl1B}, we get
 \begin{equation}
     e^\rG_se_{\rU_\rP^F}= e_{\rU_\rP^F}e^\rL_s=e^\rP_s
 \end{equation}
 which was the main step in the proof of the Borel case. Hence, following the same steps as in that proof, we get that $\sum_{u\in \rU_\rP^F}f_s(gu)= 0$ if $g\not\in \rP^F$ which finishes the proof. 
\end{proof}

\subsection{Stable functions on finite Lie algebras}
 We write $\mathfrak g$, $\mathfrak b$, $\mathfrak t$, $\mathfrak p$, $\mathfrak l_\rP$, $\mathfrak n_\rP$ for the Lie algebras of $\rG$, $\rB$, $\rT$, $\rP$, $\rL_\rP$, $\rU_\rP$.
We denote by $\mathfrak g^F,\mathfrak b^F$, etc the $F$-fixed points of $\mathfrak g$, $\mathfrak b$, etc.

The group $\rG^F$ acts on $\fg^F$ via the adjoint representaion and we denote by $C(\fg^F)$ the space of 
$\rG^F$-invaraint functions on $\fg^F$.
We equip $C(\fg^F)$ with the convolution prodcut
\[f\star f'(X)=|\fg^F|^{-1/2}\sum_{Y\in\fg^F}f(X-Y)f'(Y
).\]

For any $\rP\in par$, we have the normalized parabolic restriction map $\mathrm{Res}^\fg_{\fl_\rP}:C(\fg^F)\to C(\fl_\rP^F)$ defined by 
\[\mathrm{Res}^\fg_{\fl_\rP}(f)(\bar v)=|\fn_\rP^F|^{-1}\sum_{n\in\fn_\rP^F}f(v+n)\]
where $v\in\fp^F$ is a lift of $\bar l$. 
For any $\rP\subset\rQ\in par$, we have the following transitivity property
  \begin{equation}
    \mathrm{Res}^{\fl_\rQ}_{\fl_{\rP}}\circ\mathrm{Res}^\fg_{\fl_\rQ}=\mathrm{Res}^\fg_{\fl_\rP}.
    \end{equation}

Let $V$ be a finite dimensional $\mathbb F_q$-vector 
space and let $V^*$ be its dual vector space.
Denote by $\mathbb C[V]$ (resp. $\mathbb C[V^*]$) the space of 
complex valued functions on $V$ (resp. $V^*$).
We have the Fourier-transform
$\mathrm{FT}_V:\mathbb C[V]\to \mathbb C[V^*]$
defined by the formula
\begin{equation}\label{FT_V}
\mathrm{FT}_V(f)(X^*)=|V|^{-1/2}\sum_{Y\in V}\psi(( X^*(Y))f(Y).
\end{equation}

In this paper
we are mainly interested in the case $V=\fg^F$.
The assumption on $p$ implies that there exists a $\rG$-invariant non-degenerate bilinear form 
\[\langle-,-\rangle:\fg\times\fg\to k\]
which is defined over $\mathbbm F_q$. 
In this section we will
fix such an invariant form. It induces a $\rG^F$-invariant isomorphism
$\fg^F\cong(\fg^*)^F$  and the Fourier-transform~\eqref{FT_V} restricts to a 
self map 
$\mathrm{FT}_\fg:C(\fg^F)\to 
C((\fg^*)^F)\cong C(\fg^F)$
given by the formula
\begin{equation}
\mathrm{FT}_\fg(f)(X)=|\fg^F|^{-1/2}\sum_{Y\in\fg^F}\psi(\langle X,Y\rangle)f(Y).
\end{equation}

We have the following well-known properties of Fourier transforms, see, e.g., \cite{Le, Lu1}:
Note that the restriction of $\langle-,-\rangle$ 
to $\fp\times\fp$ descends to
a $\rL_\rP$-invariant non-degenerate bilinear form
$\langle-,-\rangle_{\fl_\rP}:\fl_\rP\times\fl_\rP\to k$
on $\fl_\rP=\fp/\fn_\rP$ and we write 
$\mathrm{FT}_{\fl_\rP}:C(\fl_\rP^F)\to C(\fl_\rP^F)$
the corresponding Fourier transform.

\begin{lemma}\label{properties of FT}
\begin{enumerate}
    \item 
    $\mathrm{FT}_\fg(f\star f')=\mathrm{FT}_\fg(f)\mathrm{FT}_\fg(f')$
    \item
    $\FT_\fg(ff')=\mathrm{FT}_\fg(f)\star\mathrm{FT}_\fg(f')$
    \item 
    $\FT_\fg^2(f)=f^-$, where $f^-(X)=f(-X)$.
    \item 
    $\FT_{\fl_\rP}\circ\mathrm{Res}^\fg_{\fl_\rP}=\mathrm{Res}^\fg_{\fl_\rP}\circ\FT_\fg$.
   \end{enumerate}
\end{lemma}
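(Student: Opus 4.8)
The plan is to prove all four assertions by direct manipulation of character sums, the single essential input being the orthogonality relation
\[
\sum_{Y\in\fg^F}\psi(\langle X,Y\rangle)=|\fg^F|\quad\text{if }X=0,\qquad\text{and }0\text{ otherwise},
\]
which holds because $\psi$ is nontrivial and $\langle-,-\rangle$ is nondegenerate over $\mathbb F_q$. Given this, parts (1)--(3) are essentially formal, while part (4) is the one place where a small amount of structure theory of the parabolic is needed.

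For (3), expand $\FT_\fg^2(f)(X)=|\fg^F|^{-1}\sum_{Y,Z}\psi(\langle X,Y\rangle)\psi(\langle Y,Z\rangle)f(Z)$ and sum over $Y$ first; orthogonality forces $X+Z=0$, leaving $f(-X)=f^-(X)$. For (1), write out $\FT_\fg(f\star f')(X)$, substitute $W=Z-Y$ in the convolution sum, and use $\psi(\langle X,W+Y\rangle)=\psi(\langle X,W\rangle)\psi(\langle X,Y\rangle)$ to factor the resulting double sum; the normalization $|\fg^F|^{-1/2}$ built into $\star$ is precisely what makes the powers of $|\fg^F|$ match, giving $\FT_\fg(f)(X)\FT_\fg(f')(X)$. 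For (2), observe that (3) makes $\FT_\fg$ invertible, hence injective, and compute $\FT_\fg\bigl(\FT_\fg(f)\star\FT_\fg(f')\bigr)=\FT_\fg^2(f)\,\FT_\fg^2(f')=(ff')^-=\FT_\fg^2(ff')$ using (1) and (3); cancelling one $\FT_\fg$ yields (2). A direct computation of $\FT_\fg(f)\star\FT_\fg(f')$, collapsing the inner sum by orthogonality, works equally well.

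For (4) I would first record the properties of the form relative to $\rP$. Write $\fg=\fn_\rP^{-}\oplus\fl_\rP\oplus\fn_\rP$, with $\fn_\rP^{-}$ the opposite nilradical, and use the $\rT$-weight decomposition of $\fg$: invariance of $\langle-,-\rangle$ forces $\langle\fg_\alpha,\fg_\beta\rangle=0$ unless $\alpha+\beta=0$, and since the root system $\Phi_\rP$ of $\rL_\rP$ is stable under negation whereas $\Phi^+\setminus\Phi_\rP$ is not, this gives $\langle\fl_\rP,\fn_\rP\rangle=\langle\fl_\rP,\fn_\rP^{-}\rangle=\langle\fn_\rP,\fn_\rP\rangle=0$, shows that $\langle-,-\rangle$ restricts to a nondegenerate form on $\fl_\rP$ --- necessarily $\langle-,-\rangle_{\fl_\rP}$ --- and shows that $\langle-,-\rangle$ induces a perfect $\mathbb F_q$-pairing between $\fn_\rP^F$ and $(\fn_\rP^{-})^F$. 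Then, taking all lifts inside $\fl_\rP^F$, the left-hand side of (4) unwinds to
\[
\FT_{\fl_\rP}\bigl(\Res^\fg_{\fl_\rP}f\bigr)(\bar u)=|\fl_\rP^F|^{-1/2}|\fn_\rP^F|^{-1}\sum_{v\in\fl_\rP^F}\sum_{n\in\fn_\rP^F}\psi(\langle u,v\rangle)\,f(v+n).
\]
For the right-hand side, substitute $Y=Y_-+Y_0+Y_+\in\fn_\rP^{-}\oplus\fl_\rP\oplus\fn_\rP$ into $\FT_\fg(f)(u+n)$; the orthogonality relations collapse $\langle u+n,Y\rangle$ to $\langle u,Y_0\rangle+\langle n,Y_-\rangle$, and summing over $n\in\fn_\rP^F$ annihilates all terms with $Y_-\neq 0$ by perfectness of the pairing. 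Using $|\fg^F|=|\fn_\rP^F|^{2}|\fl_\rP^F|$ to reconcile the prefactors, the right-hand side becomes the identical double sum. The only non-formal ingredient in the whole argument is this bookkeeping for (4) --- identifying the orthogonality of $\langle-,-\rangle$ with respect to the parabolic and tracking the normalizing powers of $|\fg^F|$, $|\fl_\rP^F|$ and $|\fn_\rP^F|$; everything else is routine.
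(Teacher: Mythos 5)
Your proof is correct. The paper does not actually prove this lemma --- it is stated as well known with a pointer to the references \cite{Le, Lu1} --- so there is no argument in the text to compare against; your computation is the standard one those references would supply. Parts (1)--(3) are the routine character-sum manipulations you describe, and your treatment of (4) correctly isolates the only substantive points: the weight-space orthogonality $\langle\fg_\alpha,\fg_\beta\rangle=0$ for $\alpha+\beta\neq 0$ (which is also what makes the form descend to $\fl_\rP=\fp/\fn_\rP$ as the paper asserts, and identifies the descended form with the restriction to the Levi complement), the resulting perfect pairing of $\fn_\rP^F$ with $(\fn_\rP^-)^F$, and the count $|\fg^F|=|\fn_\rP^F|^2|\fl_\rP^F|$ reconciling the normalizations. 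One implicit assumption worth flagging is the symmetry of $\langle-,-\rangle$, used in (3) to collapse $\sum_Y\psi(\langle X,Y\rangle+\langle Y,Z\rangle)$; this holds for the invariant forms in play here but should be said.
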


Consider the Chevalley map
$\chi:\fg\to \fg//\rG\cong\ft//\rW$.

\begin{definition}\label{stable function on Lie}
    A function $f\in C(\fg^F)$ is called \emph{stable}
    if its Fourier transform 
    $\mathrm{FT}_\fg(f):\fg^F\to\mathbb C$
    is constant on the fiber $\chi^{-1}(\theta)$ 
    for all $\theta\in(\ft//\rW)^F$

\end{definition}

\begin{remark}
The Fourier transform induces a 
 decomposition $C(\fg^F)=\oplus_{\theta\in(\ft//\rW)^F} C(\fg^F)_\theta$
 where $C(\fg^F)_\theta$ is the subspace of functions $h\in C(\fg^F)$ such that 
 $\mathrm{FT}_\fg(h)$ is supported on $\chi^{-1}(\theta)$. One can view 
 $C(\fg^F)_\theta$ as Lie algebra analogue of Deligne-Lusztig packets and a function $f$ is stable if and only if for any $\theta\in(\ft//\rW)^F$ there exists a constant $c_\theta$ such that 
 \[f\star h=c_\theta\cdot h\]
 for all $h\in C(\fg^F)_\theta$.
    
\end{remark}

Let $\mathbb C[(\ft//\rW)^F]$ be the space of complex valued functions on  $(\ft//\rW)^F$
with multiplication given by multiplication of functions.  For any $\rP\subset\rQ\in par$, 
we have a 
canonical $\rW_\rP$-equivariant 
isomorphism 
$\ft_\rP\cong\ft_\rQ$
compatible with 
the Frobenious endomorphism and 
we denote by 
\[\mathrm{res}^{\ft_\rQ}_{\ft_\rP}:\mathbb C[(\ft_\rQ//\rW_\rQ)^F]\to \mathbb C[(\ft_\rP//\rW_\rP)^F]\]
the map given by pull back along the natural 
map $(\ft_\rP//\rW_\rP)^F\to(\ft_\rQ//\rW_\rQ)^F$

We haves the following properties of stable functions on $\fg^F$:
\begin{theorem}\label{main thm stable Lie}
\begin{enumerate}
    \item 
    For any $z\in\mathbb C
    [(\ft//\rW)^F]$ the composition
    \[f_z:=\FT_\fg\circ(\chi^*(z)^-):\fg^F\to(\ft//\rW)^F\to\mathbb C\]
    is a stable function on $\fg^F$ and 
    the assignment 
    $z\to f_z$
    defines an algebra isomorphism
    \[\mathbb C[(\ft//\rW)^F]\cong C^{st}(\fg^F).\]
    
    \item For any  $\rP\subset\rQ\in par$ and $f\in C^{st}(\fl_\rQ^F)$, we have $\mathrm{Res}^{\fl_\rQ}_{\fl_\rP}(f)\in C^{st}(\fl_{\rP}^F)$ and there is a commutative diagram
        \[\xymatrix{\mathbb C[(\ft_\rQ//\rW_\rQ)^F]\ar[r]\ar[d]^{\mathrm{res}^{\ft_\rQ}_{\ft_\rP}}&C^{st}(\fl_\rQ^F)\ar[d]^{\mathrm{Res}^{\fl_\rQ}_{\fl_{\rP}}}\\
        \mathbb C[(\ft_\rP//\rW_\rP)^F]\ar[r]&C^{st}(\fl_\rP^F)}.\]
where the horizontal maps are the isomorphisms in part (1)
        \item 
        For any $f\in C^{st}(\fg^F)$
        and any $\rP\in par$, we have 
        \begin{equation}\label{vanishing Lie algebra}
            \sum_{n\in\fn^F_\rP} f(X+n)=0\ \ \ \text{for all}\ \ X\notin\fp^F.
        \end{equation}
\end{enumerate}
    
\end{theorem}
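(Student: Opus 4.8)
The plan is to deduce all three parts from the formal properties of the Fourier transform in Lemma~\ref{properties of FT}, together with two geometric inputs. First, with respect to the fixed $\rG$-invariant form one has $\fn_\rP^{\perp}=\fp$; since the form is $\mathbb F_q$-rational it restricts to a nondegenerate pairing on $\fg^F$, so $(\fn_\rP^F)^{\perp}=\fp^F$ inside $\fg^F$, and consequently $\sum_{n\in\fn_\rP^F}\psi(\langle Y,n\rangle)$ equals $|\fn_\rP^F|$ if $Y\in\fp^F$ and $0$ otherwise (the vanishing because $n\mapsto\langle Y,n\rangle$ is then a surjective $\mathbb F_q$-linear functional on $\fn_\rP^F$ and $\sum_{a\in\mathbb F_q}\psi(a)=0$). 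Second, \emph{if $Y$ lies in a parabolic subalgebra $\fp$ with Levi decomposition $Y=\bar Y+n$, $\bar Y\in\fl_\rP$, $n\in\fn_\rP$, then $\chi(Y)=\iota(\chi_{\fl_\rP}(\bar Y))$}, where $\iota\colon\ft_\rP//\rW_\rP\to\ft//\rW$ is the canonical map: choosing a cocharacter $\lambda$ with $\fp=\fg_{\geq0}$, $\fl_\rP=\fg_0$, $\fn_\rP=\fg_{>0}$, one has $\lim_{t\to0}\operatorname{Ad}(\lambda(t))Y=\bar Y$, so $\bar Y$ lies in the orbit closure $\overline{\rG\cdot Y}$ on which the $\rG$-invariant morphism $\chi$ is constant; in particular $\chi$ is constant on each coset $Y+\fn_\rP$ inside $\fp$.

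For part (1), since $\chi$ is $\rG$-invariant, $\chi^*(z)$ and hence $f_z=\FT_\fg(\chi^*z)$ lie in $C(\fg^F)$. By Lemma~\ref{properties of FT}(3), $\FT_\fg(f_z)=\FT_\fg^2(\chi^*z)=(\chi^*z)^-$, which is constant on fibres of $\chi$ because $X\mapsto-X$ descends to an involution of $\ft//\rW$; thus $f_z\in C^{st}(\fg^F)$. Lemma~\ref{properties of FT}(2) gives $f_z\star f_{z'}=\FT_\fg(\chi^*z)\star\FT_\fg(\chi^*z')=\FT_\fg(\chi^*(zz'))=f_{zz'}$, so $z\mapsto f_z$ is an algebra homomorphism and in particular $C^{st}(\fg^F)$ is closed under $\star$. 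Injectivity follows once $\chi\colon\fg^F\to(\ft//\rW)^F$ is known to be surjective: a fibre $\chi^{-1}(\theta)$ is a nonempty $F$-stable $\rG$-variety whose unique closed orbit is the orbit of a semisimple element and is $F$-stable, hence has an $\mathbb F_q$-point by Lang's theorem. For surjectivity onto $C^{st}(\fg^F)$, a stable $f$ satisfies $\FT_\fg(f)=\chi^*w$ for some $w\in\mathbb C[(\ft//\rW)^F]$, whence $f=\FT_\fg^{-1}(\chi^*w)=\FT_\fg\bigl((\chi^*w)^-\bigr)=f_{w'}$ with $w'$ the pullback of $w$ along the involution of $(\ft//\rW)^F$ induced by $X\mapsto-X$. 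Hence $z\mapsto f_z$ is an algebra isomorphism.

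For part (2), applying Lemma~\ref{properties of FT}(4) to the reductive group $\rL_\rQ$ with its parabolic $\rB_{\rP,\rQ}$ (Levi $\rL_\rP$) gives $\FT_{\fl_\rP}\circ\Res^{\fl_\rQ}_{\fl_\rP}=\Res^{\fl_\rQ}_{\fl_\rP}\circ\FT_{\fl_\rQ}$; and the second geometric input, applied inside $\fl_\rQ$, shows that the Chevalley map $\chi_\rQ$ of $\fl_\rQ$ is constant on the averaging set $\{v+n:n\in\fn_{\rP,\rQ}^F\}$ used to define $\Res^{\fl_\rQ}_{\fl_\rP}$ (its value there is $\iota(\chi_\rP(\bar v))$, depending only on the Levi part $\bar v$), so $\Res^{\fl_\rQ}_{\fl_\rP}(\chi_\rQ^*z)=\chi_\rP^*(\res^{\ft_\rQ}_{\ft_\rP}z)$, where $\chi_\rP$ is the Chevalley map of $\fl_\rP$. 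Combining these,
\begin{align*}
\Res^{\fl_\rQ}_{\fl_\rP}(f_z)&=\Res^{\fl_\rQ}_{\fl_\rP}\bigl(\FT_{\fl_\rQ}(\chi_\rQ^*z)\bigr)=\FT_{\fl_\rP}\bigl(\Res^{\fl_\rQ}_{\fl_\rP}(\chi_\rQ^*z)\bigr)\\
&=\FT_{\fl_\rP}\bigl(\chi_\rP^*(\res z)\bigr)=f_{\res z},
\end{align*}
which is stable by part (1). This yields simultaneously the inclusion $\Res^{\fl_\rQ}_{\fl_\rP}\bigl(C^{st}(\fl_\rQ^F)\bigr)\subseteq C^{st}(\fl_\rP^F)$ and the commutativity of the square, whose horizontal maps are $z\mapsto f_z$.

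For part (3) --- the substantive point --- put $h(X)=\sum_{n\in\fn_\rP^F}f(X+n)$; we must show $h$ vanishes off $\fp^F$. Reindexing $X\mapsto X-n$ in the defining double sum,
\[
\FT_\fg(h)(Y)=|\fg^F|^{-1/2}\sum_{X\in\fg^F}\sum_{n\in\fn_\rP^F}\psi(\langle Y,X\rangle)f(X+n)=\FT_\fg(f)(Y)\sum_{n\in\fn_\rP^F}\psi(-\langle Y,n\rangle),
\]
so the first geometric input gives $\FT_\fg(h)=|\fn_\rP^F|\,\FT_\fg(f)\cdot\mathbbm{1}_{\fp^F}$. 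By stability $\FT_\fg(f)$ is constant on fibres of $\chi$, and by the second geometric input its restriction to $\fp^F$ is invariant under translation by $\fn_\rP^F$; hence $\FT_\fg(h)$ is supported on $\fp^F$ \emph{and} invariant under translation by $\fn_\rP^F$. Applying $\FT_\fg$ once more and using Lemma~\ref{properties of FT}(3), $h^-=\FT_\fg^2(h)=\FT_\fg\bigl(\FT_\fg(h)\bigr)$, and a function invariant under translation by $\fn_\rP^F$ has Fourier transform supported on $(\fn_\rP^F)^{\perp}=\fp^F$ (the same character-sum argument as above); thus $h^-$, and hence $h$, is supported on $\fp^F$, i.e. $\sum_{n\in\fn_\rP^F}f(X+n)=0$ for $X\notin\fp^F$. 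The only genuine work is part (3); parts (1) and (2) are formal once Lemma~\ref{properties of FT} is available, and the reason part (3) is far shorter than its group-theoretic analogue Theorem~\ref{vanishing group} is that on the Lie algebra side ``stable'' is \emph{defined} via the Fourier transform, so stability immediately supplies the support statement the argument needs, with no Deligne--Lusztig input. The points requiring care are precisely the two geometric inputs isolated at the outset --- the identity $\fn_\rP^{\perp}=\fp$ over $\mathbb F_q$ and the constancy of $\chi$ on $\fn_\rP$-cosets in a parabolic subalgebra --- both standard but used repeatedly.
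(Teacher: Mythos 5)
Your proposal is correct and follows essentially the same route as the paper: parts (1) and (2) via the formal properties of $\FT_\fg$ and the compatibility of the Chevalley maps with parabolic restriction, and part (3) via the Fourier duality between support on $\fp^F=(\fn_\rP^F)^\perp$ and invariance under translation by $\fn_\rP^F$, the key input in both cases being that stability forces $\FT_\fg(f)|_{\fp^F}$ to be constant on $\fn_\rP^F$-cosets. The only cosmetic difference is that you run the duality on $\fg^F$ with a double Fourier transform where the paper works on the quotient $\fg/\fn_\rP\cong\fp^*$, and you additionally supply the surjectivity of $\chi$ on $\mathbb F_q$-points (via Lang's theorem) that the isomorphism claim in part (1) implicitly requires.
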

\begin{proof}
Part (1) follows  from
the definition and Properties of  Fourier transforms in Lemma \ref{properties of FT}.

Proof of (2). Without loss of generality, we can assume $\rQ=G$.
We claim that, 
for any $z\in\mathbb C[(\ft//\rW)^F]$,
we claim that 
\[\Res^\fg_{\fl_\rP}(\chi^*(z)^-)=\chi^*_{\fl_\rP}(\mathrm{res}^\ft_{\ft_\rP}(z))^-.\]
Then Lemma \ref{properties of FT} implies 
\[\Res^\fg_{\fl_\rP}(\FT_\fg\circ(\chi^*(z)^-))=
\FT_{\fl_\rP}(\Res^\fg_{\fl_\rP}(\chi^*(z)^-))=\FT_{\fl_\rP}\circ(\chi^*_{\fl_\rP}(\res^\ft_{\ft_\rP}(z))^-)\]
Part (2) follows. To proof the claim note that we have following commutative diagram. 
\begin{equation}\label{Chevalley maps}
\xymatrix{\fl_\rP\ar[d]^{\chi_{\fl_\rP}}&\fp\ar[r]^{i_\rP}\ar[l]_{\pi_\fp}&\fg\ar[d]^\chi\\
 \ft_\rP//\rW_\rP\ar[rr]&&\ft//\rW}
\end{equation}
It follows that 
\[\Res^\fg_{\fl_\rP}(\chi^*(z)^-)(\bar v)=|\fn_\rP^F|^{-1}\sum_{n\in\fn_\rP^F} \chi^*z(-v-n)=
|\fn_\rP^F|^{-1}\sum_{n\in\fn_\rP^F} (\chi_{\fl_\rP}\circ\pi_\rP)^*\res^\ft_{\ft_\rP}(z)(-v-n)=\]
\[=|\fn_\rP^F|^{-1}(|\fn_\rP^F|^{}\chi_{\fl_\rP}^*\res^\ft_{\ft_\rP}(z)(-\bar v))=\chi_{\fl_\rP}^*\res^\ft_{\ft_\rP}(z)^-(\bar v).
\]
The proof is completed.

Proof of (3). 
We have the following Cartesian diagram
\[\xymatrix{\fp\ar[r]^{i_\rP}\ar[d]^{\pi_\rP}&\fg\cong\fg^*\ar[d]^{i_\rP^*}\\
\fl_\rP^*\cong\fl_\rP=\fp/\fn_\rP\ar[r]^{\pi_\rP^*}&\fg/\fn_\rP\cong\fp^*}\]
where the isomorphisms are induced by the  
invariant $\langle-,-\rangle$ form on $\fg$ and the maps are natural inclusions and quotients.
We need to show that the function
$h:\fg/\fn_\rP\cong\fp^*\to\mathbb C$  given by 
\[h(v)=\sum_{w\in(i_\rP^*)^{-1}(v)} f(w)\]
is supported on $\fl_\rP^F$.
Direct computations
of Fourier transforms show that (a) 
$h$ is supported on $\fl^F_\rP$
if and only if its Fourier transform 
$\mathrm{FT}_{\fp^*}(h):\fp\to\mathbb C$
 is constant 
on the fibers of $\pi_\rP:\fp\to\fl_\rP$
(b) $\FT_\fg(f)|_{\fp^F}=\mathrm{FT}_{\fp^*}(h)$. Thus we reduce to show that  the restriction 
$\FT_\fg(f)|_{\fp^F}:\fp^F\to\mathbb C$ 
to the subspace $\fp^F\subset\fg^F$
is constant on the fibers of the projecrion $\fp^F\to\fl^F_\rP$. This follows from Part (1)
and diagram~\eqref{Chevalley maps}
\end{proof}

\section{From stable functions to Bernstein centers}\label{stable to center}
For the rest of the paper,  we will fix
a non-degenerate $G(\mathbbm k)$ invariant bilinear form $\langle-,-\rangle$ on the Lie algebra $\mathrm{Lie}(G)(\mathbbm k)$ such that for any $P\in Par$, we have 
$\mathrm{Lie}(P)^{\perp}=\mathrm{Lie}(P^+)$.
Here $\mathrm{Lie}(P)^{\perp}=\{X\in\mathrm{Lie}(G)(\mathbbm k)|\langle X,\mathrm{Lie}(P)\rangle\subset\omega\mathcal{O}\}$.

\subsection{Depth-zero case}
For any $P\in Par$ we have
$P/P^+\cong\rG_P^F$ where $\rG_P$ is a $k$-connected reductive group defined over $\mathbb F_q$.  For any $P\subset Q\in Par$,
we have
\[\rU^F_{P,Q}:=P^+/Q^+\subset \rB^F_{P,Q}:=P/Q^+\] where $\rB_{P,Q}\subset\rG_Q$
is a $F$-stable parabolic subgroup with unipotent radical $\rU_{P,Q}$ and Levi quotient $\rB_{P,Q}/\rU_{P,Q}\cong\rG_P$.
We will write $\rB_P=\rB_{I,P}$ and $\rU_P=\rU_{I,P}$, etc.

Let $T(\mathcal O)$ be the maximal compact subgroup of the split maximal torus $T\subset G$. We have $T(\mathcal O)\subset I$ and for any $P\in Par$ we have 
$\text{Im}(T(\mathcal O)\to I\to P\to\rG_P^F)=\rT_P^F$ where 
$\rT_P\subset\rB_P\subset\rG_P$ is a maximal $F$-stable torus. We denote by $\rW_P=N(\rT_P)/\rT_P$ the Weyl group. 
When $P=G(\mathcal O)$, we will write 
$\rG_P=\rG$, $\rB=\rB_P$, $\rT=\rT_P$, and 
$\rW=\rW_P$.
Note that we have a $F$-equivariant isomorphism 
$\rT\cong\rT_P$ such that the natural map
$T(\mathcal O)\to\rT_P^F$ factors as 
$T(\mathcal O)\to\rT^F\cong\rT^F_P$

Let $\rW_a=N(T(F))/T(\mathcal O)$ be the affine Weyl group of $G(\mathbbm k)$. We have a natural surjection 
$\rW_a\to N(T(F))/T(F)\cong\rW$. For any $P\in Par$, the Weyl group $\rW_P$
is naturally a subgroup  $\rW_P\subset\rW_a$ and the isomorphism 
$\rT\cong\rT_P$ is $\rW_P$-equivariant 
where $\rW_P$ acts on $\rT$ through the morphism $\rW_P\to\rW_a\to\rW$.

For any $P\in Par$, we consider the normalized 
convolution $\star^\mu$ on $C(\rG^F_P)$ given by
\[f\star^\mu f'=\mu(P^+)f\star f'\]
with multiplicative unit 
$\mu(P^+)^{-1}\mathrm 1_e\in C(\rG_P^F)$.
We have a natural algebra inclusion 
\[\iota_P:(C(\rG^F_P),\star^\mu)\to  
 (C^{\infty}_c(\frac{P/P^+}{P}),*)\to  (C^{\infty}_c(\frac{G(\mathbbm k)/P^+}{P}),*)\]
sending $f_{\rG_P}:\rG^F_P\to\mathbb C$
to the composed map
$f_P:P\to P/P^+\cong\rG_P^F\stackrel{f_{\rG_P}}\to\mathbb C$.
\begin{lemma}\label{comp}
   For any $P\subset Q\in Par$, there is a commutative diagram of algebra homomorphisms
   \[\xymatrix{(C^{st}(\rG_Q^F),\star^\mu)\ar[r]^{\iota_Q}\ar[d]^{\Res^{\rG_Q}_{\rG_P}}&(C^{\infty}_c(\frac{G(\mathbbm k)/Q^+}{Q}),*)\ar[d]^{*\delta_{P^+}}\\
   (C^{st}(\rG_P^F),\star^\mu)\ar[r]^{\iota_P
   }&(C^{\infty}_c(\frac{G(\mathbbm k)/P^+}{P}),*)}\]
\end{lemma}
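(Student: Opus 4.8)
The plan is to verify the commutativity of the square by a direct computation: starting from $f_{\rG_Q}\in C^{st}(\rG_Q^F)$, we compute the two composites $\iota_P\circ\Res^{\rG_Q}_{\rG_P}$ and $(\,\cdot\,*\delta_{P^+})\circ\iota_Q$ and show they agree as elements of $C^\infty_c(\tfrac{G(\mathbbm k)/P^+}{P})$. First I would fix notation: write $f_Q = \iota_Q(f_{\rG_Q})$, the function on $G(\mathbbm k)$ supported on $Q$ obtained by inflating $f_{\rG_Q}$ along $Q\to Q/Q^+\cong \rG_Q^F$; write $g = \Res^{\rG_Q}_{\rG_P}(f_{\rG_Q})\in C(\rG_P^F)$ and $f_P^{(1)}=\iota_P(g)$, the inflation of $g$ to a function on $G(\mathbbm k)$ supported on $P$; and write $f_P^{(2)}= f_Q*\delta_{P^+}$. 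Both of these are $P^+$-bi-invariant, $P$-conjugation-invariant functions on $G(\mathbbm k)$, so it suffices to compare their values on a set of $P^+$-double coset representatives. The key structural input is the identification, valid because $P\subset Q$, of $\rB_{P,Q}=P/Q^+$ as an $F$-stable parabolic of $\rG_Q$ with unipotent radical $\rU_{P,Q}=P^+/Q^+$ and Levi quotient $\rG_P$; this is exactly the picture recalled just before the lemma, and it is what makes the parabolic restriction $\Res^{\rG_Q}_{\rG_P}$ on the finite group side match the operation $*\delta_{P^+}$ on the $p$-adic side.

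The computation itself splits into two cases according to support. For $x\notin P$: on the $p$-adic side, $f_Q*\delta_{P^+}(x) = \tfrac{1}{\mu(P^+)}\int_{P^+} f_Q(xu)\,d\mu(u)$, and since $f_Q$ is supported on $Q$ and $x\notin P$, the relevant coset combinatorics forces this to involve a sum of $f_{\rG_Q}$ over a coset of $\rU_{P,Q}^F=\bar P^+$ that lies outside $\rB_{P,Q}^F=\bar P$; by the vanishing property of stable functions, Theorem~\ref{vanishing group} applied to $f_{\rG_Q}\in C^{st}(\rG_Q^F)$ with the parabolic $\rB_{P,Q}$, this sum is zero. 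On the other side $f_P^{(1)}(x)=0$ because $f_P^{(1)}$ is supported on $P$. So both vanish off $P$. For $x\in P$: writing $x = p$ and reducing mod $Q^+$, the convolution $f_Q*\delta_{P^+}$ at $p$ becomes (up to the normalizing measure factor $\mu(P^+)$, which is absorbed correctly by the definition of $\star^\mu$ and of $\Res$ as the \emph{normalized} restriction dividing by $|\rU_{P,Q}^F|=|\bar P^+|$) exactly $\sum_{u\in \rU^F_{P,Q}} f_{\rG_Q}(\bar p\, u)$ divided by $|\rU^F_{P,Q}|$, which is by definition $\Res^{\rG_Q}_{\rG_P}(f_{\rG_Q})$ evaluated at the image of $p$ in $\rG_P^F$ — i.e.\ $f_P^{(1)}(p)$. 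Matching the measure normalizations carefully (using $\mathrm{Lie}(P)^\perp = \mathrm{Lie}(P^+)$ is not needed here; only $\mu(P^+)/\mu(Q^+) = |\rU^F_{P,Q}|$ is) completes the identification.

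The remaining assertions are that all four maps in the square are algebra homomorphisms and that they restrict appropriately to stable functions. That $\iota_P,\iota_Q$ are algebra maps is the content of the displayed algebra inclusion stated just before the lemma (inflation along a quotient intertwines $\star^\mu$ with $*$). That $\Res^{\rG_Q}_{\rG_P}$ is an algebra homomorphism for the convolution products, and that it carries $C^{st}(\rG_Q^F)$ into $C^{st}(\rG_P^F)$, is part of Proposition~\ref{main thm stable group}(2) (using the commutative diagram there together with the fact that $\mathrm{res}^{\hat\rT_Q}_{\hat\rT_P}$ is a ring map). That $h\mapsto h*\delta_{P^+}$ is an algebra map $\M^0_Q\to\M^0_P$ is $\phi^0_{P,Q}$ from Section~\ref{depth zero center}. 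So once the square commutes on elements, all maps being algebra homomorphisms is automatic from the cited results.

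I expect the main obstacle to be purely bookkeeping rather than conceptual: getting the three normalization conventions to line up — the factor $\mu(P^+)$ hidden in $\star^\mu$, the factor $|\rU^F_{P,Q}|^{-1}$ in the definition of $\Res$ versus $\mathrm{res}$, and the factor $\mu(P^+)^{-1}$ in $\delta_{P^+}=\mu(P^+)^{-1}\mathbbm 1_{P^+}$ — so that the identity $\mu(P^+)/\mu(Q^+)=|P^+/Q^+|=|\rU^F_{P,Q}|$ makes everything cancel. The one genuinely substantive ingredient is the appeal to Theorem~\ref{vanishing group} to kill the off-$P$ values; without stability of $f_{\rG_Q}$ the square would not commute, which is why the lemma is stated for $C^{st}$ rather than all of $C(\rG_Q^F)$.
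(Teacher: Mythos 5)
Your proposal is correct and follows essentially the same route as the paper's proof: a direct computation showing $\iota_Q(f_{\rG_Q})*\delta_{P^+}(x)=|\rU_{P,Q}^F|^{-1}\sum_{\bar u\in P^+/Q^+}f_{\rG_Q}(\bar x\bar u)$, which by Theorem~\ref{vanishing group} vanishes off $P$ and equals $\Res^{\rG_Q}_{\rG_P}(f_{\rG_Q})(\bar x)$ on $P$, with the normalizations matched via $\mu(P^+)/\mu(Q^+)=|\rU_{P,Q}^F|$. You correctly isolate the stability/vanishing input as the one substantive ingredient; your extra care with the algebra-homomorphism assertions (which the paper dismisses as ``clear'') is fine but not a different approach.
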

\begin{proof}
For any $f_{\rG_Q}\in C^{st}(\rG_Q^F)$ and  $x\in Q$ with image $\bar x\in Q/Q^+\cong\rG_Q^F$, we have
\[\iota_Q(f_{\rG_Q})*\delta_{P^+}(x)=\int_{G(\mathbbm k)}f_Q(xy^{-1})\delta_{P^+}(y)d\mu(y)=\mu(P^+)^{-1}\int_{P^+}f_Q(xy^{-1})d\mu(y)=\]
\[=\sum_{\bar u\in P^+/Q^+}\int_{Q^+} f_Q(xuz)d\mu(z)=
\mu(Q^+)\mu(P^+)^{-1
}(\sum_{\bar u\in P^+/Q^+}f_{\rG_Q}(\bar x\bar u))=|\rU_{P,Q}^F|^{-1}(\sum_{\bar u\in P^+/Q^+}f_{\rG_Q}(\bar x\bar u)).
\]
Since $f_{\rG_Q}$
is stable 
we have 
$|\rU_{P,Q}^F|^{-1}\sum_{\bar u\in P^+/Q^+}f_{\rG_Q}(\bar x\bar u)=0$
if $\bar x\notin P/Q^+\cong\rB_{P,Q}^F\subset\rG_Q^F$
and is equal to 
$\Res^{\rG_Q}_{\rG_P}(f_{\rG_Q})(\bar x)$ if $\bar x\in\rB_{P,Q}^F$.
All together, we obtain that $\iota_Q(f_{\rG_Q})*\delta_{P^+}$
is supported on $P$
and its value at
$x\in P$ is equal to 
\[\iota_Q(f_{\rG_Q})*\delta_{P^+}(x)=\Res^{\rG_Q}_{\rG_P}(f_{\rG_Q})(\bar x)=\iota_P(\Res^{\rG_Q}_{\rG_P}(f_{\rG_Q}))(x).\]
The compatibility with multiplication is clear.
The proof is completed.
\end{proof}

\begin{remark}
    Note that in Lemma \ref{comp} it is essential to consider stable functions $C^{st}(\rG_P^F)$
    instead of  all class functions $C(\rG_P^F)$. 
\end{remark}
\begin{lemma}\label{j_P}
    For any $P\in Par$, there is an algebra homomorphism 
    \[j_P:(C^{st}(\rG^F),\star)\to (C^{st}(\rG_P^F),\star^\mu)\]
    such that for any $P\subset Q\in Par$ we have the following commutative diagram
    \[\xymatrix{C^{st}(\rG^F)\ar[r]^{j_Q}\ar[dr]^{j_P}&C^{st}(\rG_Q^F)\ar[d]^{\Res^{\rG_Q}_{\rG_P
    }}\\
    &C^{st}(\rG_P^F)}\]
\end{lemma}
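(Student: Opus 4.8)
The plan is to realize $j_P$ on the ``spectral side'' furnished by Proposition \ref{main thm stable group}(1), which identifies $(C^{st}(\rG^F),\star)\cong\mathbb C[(\hat\rT//\rW)^F]$ and $(C^{st}(\rG_P^F),\star)\cong\mathbb C[(\hat\rT_P//\rW_P)^F]$ as algebras, the convolution corresponding to pointwise multiplication of functions (with idempotents $f_\theta\leftrightarrow\mathbbm 1_\theta$). First I would produce a morphism of finite sets $\pi_P\colon(\hat\rT_P//\rW_P)^F\to(\hat\rT//\rW)^F$. By the discussion at the beginning of Section \ref{stable to center}, there is an $F$-equivariant isomorphism $\rT\cong\rT_P$ which is $\rW_P$-equivariant for the action of $\rW_P$ on $\rT$ through $\rW_P\hookrightarrow\rW_a\twoheadrightarrow\rW$; dualizing gives an $F$- and $\rW_P$-equivariant isomorphism $\hat\rT\cong\hat\rT_P$. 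Since every $\rW$-invariant function on $\hat\rT$ is a fortiori invariant under $\rW_P$ acting through $\rW_P\to\rW$, there is an $\mathbb F_q$-morphism of affine GIT quotients $\hat\rT_P//\rW_P\cong\hat\rT//\rW_P\to\hat\rT//\rW$, and I take $\pi_P$ to be the induced map on $F$-fixed points.

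Then I would set $j_P$ equal to the composite of: the inverse of the isomorphism of Proposition \ref{main thm stable group}(1) for $\rG$; the pullback $\pi_P^*\colon\mathbb C[(\hat\rT//\rW)^F]\to\mathbb C[(\hat\rT_P//\rW_P)^F]$; the isomorphism of Proposition \ref{main thm stable group}(1) for $\rG_P$, landing in $(C^{st}(\rG_P^F),\star)$; and finally the rescaling $\lambda_P\colon(C^{st}(\rG_P^F),\star)\to(C^{st}(\rG_P^F),\star^\mu)$, $f\mapsto\mu(P^+)^{-1}f$. The last map is an algebra isomorphism because $\star^\mu=\mu(P^+)\,\star$ and the unit of $\star^\mu$ is $\mu(P^+)^{-1}\mathbbm 1_e$; the pullback $\pi_P^*$ respects pointwise multiplication and sends the constant function $1$ to $1$; hence $j_P$ is an algebra homomorphism $(C^{st}(\rG^F),\star)\to(C^{st}(\rG_P^F),\star^\mu)$, as wanted.

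It remains to check the compatibility triangle for $P\subset Q\in Par$. Since $\rG_P$ is the Levi quotient of the $F$-stable parabolic $\rB_{P,Q}\subset\rG_Q$, Proposition \ref{main thm stable group}(2) applied to this parabolic identifies $\Res^{\rG_Q}_{\rG_P}$, under the isomorphisms of part (1), with the pullback along a natural map $\rho_{P,Q}\colon(\hat\rT_P//\rW_P)^F\to(\hat\rT_Q//\rW_Q)^F$. Hence the asserted identity $\Res^{\rG_Q}_{\rG_P}\circ j_Q=j_P$ reduces to the equality $\pi_P=\pi_Q\circ\rho_{P,Q}$ of maps $(\hat\rT_P//\rW_P)^F\to(\hat\rT//\rW)^F$. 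This is a matter of unwinding the canonical identifications: the torus isomorphism $\hat\rT_P\cong\hat\rT_Q$ from the Levi decomposition in $\rG_Q$ and the isomorphisms $\hat\rT_Q\cong\hat\rT$, $\hat\rT_P\cong\hat\rT$ induced by the maps $T(\mathcal O)\to\rT_\bullet^F$ (which factor through $\rT^F\cong\rT_\bullet^F$) compose correctly, and the inclusions $\rW_P\subset\rW_Q\subset\rW_a$ are compatible with the surjection $\rW_a\to\rW$. I expect this bookkeeping — reconciling the web of canonical torus and Weyl-group identifications coming from $T(\mathcal O)$, from the affine Weyl group, and from the various Levi decompositions — to be the only real obstacle; no idea beyond Proposition \ref{main thm stable group} is required, and once the identifications are fixed the equality $\pi_P=\pi_Q\circ\rho_{P,Q}$, and with it the compatibility triangle, follows at once.
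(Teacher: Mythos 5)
Your proposal is correct and follows essentially the same route as the paper: the paper defines $j_P$ as $\mu(P^+)^{-1}t_P^*$ conjugated by the isomorphisms of Proposition \ref{main thm stable group}(1), where $t_P\colon(\hat\rT//\rW_P)^F\to(\hat\rT//\rW)^F$ is exactly your $\pi_P$, and likewise deduces the compatibility triangle from Proposition \ref{main thm stable group}(2). Your write-up merely spells out in more detail why the $\mu(P^+)^{-1}$ rescaling intertwines $\star$ with $\star^\mu$ and how the torus/Weyl-group identifications match up.
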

\begin{proof}
Note that the 
natural $\rW_P$-equivaraint map
$\rT\cong\rT_P$
gives rise to a
$\rW_P$-equivariant map
$\hat\rT\cong\hat\rT_P$
compatible with the Frobenius endomorphisms.
It induces a map 
 \begin{equation}\label{t_P}
     t_P:(\hat\rT//\rW_P)^F\to(\hat\rT//\rW)^F
 \end{equation}
 between the sets  $F$-stable semisimple conjugacy classes of $\rG_P$
 and $\rG$
 and, by applying Theorem \ref{main thm stable group},
 we define the desired algebra homomorpohism as 
 \begin{equation}\label{eqj_P}
     j_P:C^{st}(\rG^F)\cong\mathbb C[(\hat\rT//\rW)^F]\stackrel{\mu(P^+)^{-1}t_P^*
 }\longrightarrow\mathbb C[(\hat\rT//\rW_P)^F]\cong C^{st}(\rG_P^F).
 \end{equation}
The compatibility with parabolic restrctions follows from part 2 of 
 Theorem \ref{main thm stable group}.   
\end{proof}

\begin{theorem}\label{stable group to center}
    There is an algebra homomorphism 
    \[\xi^0:C^{st}(\rG^F)\to\mathcal Z^{0}(G)\]
    such that for any depth-zero representation $(\pi,V)$ and a vector $v\in V^{P^+}$ we have 
    \begin{equation}\label{formula group}
        \xi^0(f)(v)=\mu(P^+)\sum_{x\in\rG_P^F}j_P(f)(x)\pi^{P^+}(x)v
    \end{equation}
    where $j_P:C^{st}(\rG^F)\to C^{st}(\rG_P^F)$ is the map in Lemma \ref{j_P}
    and 
    $\pi^{P^+}$
    denotes the natrual representation of $\rG_P^F$
    on $V^{P^+}$.
\end{theorem}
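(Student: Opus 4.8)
The plan is to obtain $\xi^0$ as a composition $\xi^0=[A^0]\circ j$, where $[A^0]\colon A^0(G)\xrightarrow{\ \sim\ }\mathcal Z^0(G)$ is the isomorphism of Theorem \ref{main thm depth 0} and $j\colon C^{st}(\rG^F)\to A^0(G)$ is assembled from Lemmas \ref{comp} and \ref{j_P}. For each $P\in Par$ we have the algebra homomorphism $\iota_P\circ j_P\colon C^{st}(\rG^F)\to\M^0_P$, and for $P\subset Q\in Par$ the commutative square of Lemma \ref{comp} together with the commutative triangle of Lemma \ref{j_P} give
\[
\phi^0_{P,Q}\circ\iota_Q\circ j_Q=\iota_P\circ\Res^{\rG_Q}_{\rG_P}\circ j_Q=\iota_P\circ j_P .
\]
Hence the family $\{\iota_P\circ j_P\}_{P\in Par}$ is compatible with the inverse system $\{\M^0_P\}_{P\in Par}$ and induces an algebra homomorphism $j:=\lim_{P\in Par}(\iota_P\circ j_P)\colon C^{st}(\rG^F)\to A^0(G)=\lim_{P\in Par}\M^0_P$. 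I then set $\xi^0:=[A^0]\circ j$; it is an algebra homomorphism, and its image lies in $\mathcal Z^0(G)$ by Theorem \ref{main thm depth 0}. (Here $\rG=\rG_{G(\mathcal O)}$, so this uses the hyperspecial parahoric $G(\mathcal O)\in Par$.)

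Next I would verify formula \eqref{formula group}. Fix $f\in C^{st}(\rG^F)$ and write $h=j(f)=\{h_P\}_{P\in Par}$ with $h_P=\iota_P(j_P(f))\in\M^0_P$, so that $\xi^0(f)=[A_h]$. Given a depth-zero representation $(\pi,V)$ and $v\in V^{P^+}$, we have $\delta_{P^+}(v)=v$, hence, using the defining property $z_V(h_0(v))=(z_\HH(h_0))(v)$ of the Bernstein center recalled in Section \ref{depth zero center} together with $[A_h](\delta_{P^+})=h_P$ from Proposition \ref{eval},
\[
\xi^0(f)(v)=[A_h](v)=([A_h](\delta_{P^+}))(v)=h_P(v).
\]
Finally, since $h_P=\iota_P(j_P(f))$ is supported on $P$, factors through $P/P^+\cong\rG_P^F$, and $v$ is $P^+$-fixed, decomposing $P$ into cosets of $P^+$ (each of measure $\mu(P^+)$) gives
\[
h_P(v)=\int_{P}j_P(f)(\bar x)\,\pi(x)v\,d\mu(x)=\mu(P^+)\sum_{x\in\rG_P^F}j_P(f)(x)\,\pi^{P^+}(x)v,
\]
which is exactly \eqref{formula group}. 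The independence of the right-hand side from the choice of $P$ with $v\in V^{P^+}$ is automatic, since $\xi^0(f)$ is a single element of $\mathcal Z^0(G)$.

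I do not expect a genuinely hard step here: the real content has already been isolated in Theorem \ref{main thm depth 0} (the inverse-limit description of $\mathcal Z^0(G)$), in Lemma \ref{comp} (compatibility of the embeddings $\iota_P$ with the transition maps $*\,\delta_{P^+}$, where the stability of the functions is essential), and in Lemma \ref{j_P} (compatibility of the $j_P$ with parabolic restriction). The only points requiring care are bookkeeping: checking that these two compatibility diagrams paste together correctly so that $j$ descends to the inverse limit, and tracking the Haar-measure normalizations (the factors $\mu(P^+)$ versus $|\rU^F_{P,Q}|$) through the identification $\phi^0_{P,Q}=*\,\delta_{P^+}$, so that the constant $\mu(P^+)$ in \eqref{formula group} comes out as stated.
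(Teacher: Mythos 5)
Your proposal is correct and is essentially the paper's own argument: the paper also defines $\xi^0$ as the composite $\lim_P(\iota_P\circ j_P)$ followed by the isomorphism $\lim_P\M^0_P\cong\mathcal Z^0(G)$ of Theorem \ref{main thm depth 0}, using Lemmas \ref{comp} and \ref{j_P} for compatibility with the transition maps, and deduces \eqref{formula group} from Proposition \ref{eval}. Your write-up merely makes explicit the final unwinding of $h_P(v)$ into the sum over $\rG_P^F$, which the paper leaves implicit.
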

\begin{proof}
    Lemma \ref{comp}, Lemma \ref{j_P}, and Theorem \ref{main thm depth 0}
    together
    gives rise to an algebra homomorphsim
    \[\xi^0:C^{st}(\rG^F)\stackrel{\lim_{P\in Par} j_P}\longrightarrow\lim_{P\in Par} C^{st}(\rG_P^F)\stackrel{\lim_{P\in  Par} \iota_P}\longrightarrow\lim_{P\in Par}\mathcal M_P\cong \mathcal Z^0(G)\]
    where the limit  $\lim_{P\in Par} C^{st}(\rG_P^F)$
    is taking respect to the parabolic restriction maps.
    Formula~\eqref{formula group} follows from Proposition \ref{eval}.
    The proof is completed.
    
\end{proof}

\subsection{Positive integral depths case}
For any $P\in Par$ and $r\in\mathbb Z_{>0}$, we have 
$P_r/P_r^+\cong\fg_{P_r}^F$ where $\fg_{P_r}$ is a $k$-vector space defined over $\mathbb F_q$. There is natural action of $\rG_P$
on $\fg_{P_r}$ defined over $\mathbb F_q$.
For any $P\subset Q\in Par$,
we have   subgroups $\fn_{P_r,Q_r}^F:=P^+_r/Q^+_r\subset\mathfrak b_{P_r,Q_r}^F:=P_r/Q^+_r$, where 
$\mathfrak n_{P_r,Q_r}\subset\fb_{P_r,Q_r}\subset\fg_{Q_r}$
are $k$-subspaces define over $\mathbb F_q$.
We have a natural isomorphism
$\fb_{P_r,Q_r}/\fn_{P_r,Q_r}\cong\fg_{P_r}$.
We will write $\fn_{P_r}=\fn_{I_r,P_r}, \fb_{P_r}=\fb_{I_r,P_r}$, etc, and 
when $P=G(\mathcal O)$ we will write 
$\fg_r=\fg_{P_r}, \fn_r=\fn_{P_r}$, etc.

We have $T(\mathcal O)_r/T(\mathcal O)_r^+\cong\ft_r^F$ where $\ft_r\subset\fg_r$ is a $k$-subspace over $\mathbb F_q$.
For any $P\in Par$ there exists a $k$-space 
$\ft_{P_r}\subset\fg_{P_r}$ and an 
isomorphism $\ft_r\cong\ft_{P_r}$
such that natural map 
$T(\mathcal O)_r\to P_r\to P_r/P_r^+\cong\fg^F_{P_r}$ factors as 
$T(\mathcal O)_r\to\ft_r^F\cong\ft_{P_r}^F\subset\fg_{P_r}^F$.

Let $C(\fg_{P_r}^F)$ be the space of $\rG_P^F$-invaraint functionas on $\fg_{P_r}^F$.
Usuing the subspaces $\fb_{P_r,Q_r}$, etc, we can similarly defined the subspace 
$C^{st}(\fg^F_{P_r})$ of stable functions on $\fg_{P_r}^F$ and parabolic restrcition map
\[\Res^{\fg_{Q_r}}_{\fg_{P_r}}:C(\fg_{Q_r}^F)\to C(\fg_{P_r}^F).\]

\begin{lemma}\label{constant}
    For any $P\subset Q\in Par$ and $r\in\mathbb Z_{>0}$, we have 
$\mu(P^+_r)|\fg_{P_r}^F|^{1/2}=\mu(Q^+_r)|\fg_{Q_r}^F|^{1/2}$.
We will write $c_{\mu,r}=\mu(P^+_r)|\fg_{P_r}^F|^{1/2}$ for the constant. 
\end{lemma}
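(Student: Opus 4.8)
The idea is to repackage the quantity $\mu(P_r^+)\,|\fg_{P_r}^F|^{1/2}$ into a manifestly symmetric form and then feed the duality hypothesis $\mathrm{Lie}(P)^{\perp}=\mathrm{Lie}(P^+)$ into an index computation. Since $P_r^+$ is normal in $P_r$ and $P_r/P_r^+\cong\fg_{P_r}^F$, we have $|\fg_{P_r}^F|=[P_r:P_r^+]=\mu(P_r)/\mu(P_r^+)$, hence
\[
\mu(P_r^+)\,|\fg_{P_r}^F|^{1/2}=\bigl(\mu(P_r)\,\mu(P_r^+)\bigr)^{1/2},
\]
and the same identity holds for $Q$. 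So it suffices to prove $\mu(P_r)\mu(P_r^+)=\mu(Q_r)\mu(Q_r^+)$ whenever $P\subseteq Q$ in $Par$.

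First I would record the chain $Q_r^+\subseteq P_r^+\subseteq P_r\subseteq Q_r$ of compact open subgroups. Here $P_r\subseteq Q_r$ because $P\subseteq Q$ forces $\mathrm{Lie}(P)\subseteq\mathrm{Lie}(Q)$ as $\mathcal O$-lattices in $\mathrm{Lie}(G)(\mathbbm k)$, and $Q_r^+\subseteq P_r^+$ because taking orthogonals reverses this inclusion: $\mathrm{Lie}(Q^+)=\mathrm{Lie}(Q)^{\perp}\subseteq\mathrm{Lie}(P)^{\perp}=\mathrm{Lie}(P^+)$. Given these inclusions, the identity $\mu(P_r)\mu(P_r^+)=\mu(Q_r)\mu(Q_r^+)$ is equivalent to the equality of indices $[Q_r:P_r]=[P_r^+:Q_r^+]$.

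Next, for $r\ge 1$ the exponential map identifies each of these $r$-th congruence subgroups with the corresponding $\omega^r$-scaled $\mathcal O$-lattice in a measure-preserving way (this is where $p\gg 0$ is used), so $[Q_r:P_r]=[\omega^r\mathrm{Lie}(Q):\omega^r\mathrm{Lie}(P)]=[\mathrm{Lie}(Q):\mathrm{Lie}(P)]$ and likewise $[P_r^+:Q_r^+]=[\mathrm{Lie}(P^+):\mathrm{Lie}(Q^+)]$. Finally I would invoke the elementary fact that a non-degenerate $\mathbbm k$-bilinear form satisfies $[L':L]=[L^{\perp}:L'^{\perp}]$ for any $\mathcal O$-lattices $L\subseteq L'$ — one reduces to the case $L'=L+\mathcal O v$ with $\omega v\in L$, where $x\mapsto\langle x,v\rangle$ defines a surjection $L^{\perp}\twoheadrightarrow\mathcal O/\omega\mathcal O$ with kernel $L'^{\perp}$, and then induces. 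Applied to $\mathrm{Lie}(P)\subseteq\mathrm{Lie}(Q)$ together with the hypothesis (used twice), this gives $[\mathrm{Lie}(Q):\mathrm{Lie}(P)]=[\mathrm{Lie}(P)^{\perp}:\mathrm{Lie}(Q)^{\perp}]=[\mathrm{Lie}(P^+):\mathrm{Lie}(Q^+)]$, which is exactly the desired equality of indices.

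The only step that requires care is the identification of $p$-adic congruence-subgroup indices with $\mathcal O$-lattice indices; everything else is formal. As an alternative one can bypass the exponential map entirely: $\rB_{P,Q}=P/Q^+$ is a parabolic subgroup of the reductive quotient $\rG_Q=Q/Q^+$ with Levi factor $\rG_P$ and unipotent radical $\rU_{P,Q}=P^+/Q^+$, whence $\dim\rG_Q=\dim\rG_P+2\dim\rU_{P,Q}$; combining this with $|\fg_{P_r}^F|=q^{\dim\rG_P}$ for $r\ge 1$ and $[P_r^+:Q_r^+]=|\rU_{P,Q}^F|=q^{\dim\rU_{P,Q}}$ yields the constancy of $c_{\mu,r}$ directly. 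Either way the argument is short.
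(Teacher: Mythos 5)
Your argument is correct, and your primary route is genuinely different from the paper's. The paper's entire proof is the one-line computation $(\mu(P_r^+)/\mu(Q_r^+))^2=|\fn_{P_r,Q_r}^F|^2=|\fg_{Q_r}^F|/|\fg_{P_r}^F|$, whose only nontrivial input is the parabolic dimension count $\dim\rG_Q=\dim\rG_P+2\dim\rU_{P,Q}$ --- i.e.\ exactly the ``alternative'' you sketch in your closing sentences. Your main argument instead rewrites $c_{\mu,r}$ as $(\mu(P_r)\mu(P_r^+))^{1/2}$ and derives its constancy from the lattice-duality identity $[L':L]=[L^{\perp}:(L')^{\perp}]$ combined with the standing hypothesis $\mathrm{Lie}(P)^{\perp}=\mathrm{Lie}(P^+)$ fixed at the start of Section \ref{stable to center}. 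The two inputs are equivalent (your index identity $[Q_r:P_r]=[P_r^+:Q_r^+]$ unwinds, using $|\fb_{P_r,Q_r}^F|=|\fg_{P_r}^F|\,|\fn_{P_r,Q_r}^F|$, to $|\fg_{Q_r}^F|=|\fg_{P_r}^F|\cdot|\fn_{P_r,Q_r}^F|^2$), but your version has the merit of exhibiting $c_{\mu,r}$ as the self-dual normalization of the measure and of explaining why the duality hypothesis on the invariant form is the natural one; the cost is the extra step of identifying congruence-subgroup indices with $\mathcal O$-lattice indices (via the Moy--Prasad isomorphisms or the exponential for $p\gg0$, and using that $r\in\Z$ so the depth-$r$ lattices are the $\omega^r$-multiples of the depth-$0$ ones), which the paper's purely finite-group formulation avoids. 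You correctly flag this as the only delicate point, and it does go through for integral $r$, so both of your versions are complete proofs.
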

\begin{proof}
    Indeed, we have 
    $(\mu(P^+_r)/\mu(Q^+_r))^2=|\fn_{P_r,Q_r}^F|^2=|\fg_{Q_r}^F|/|\fg_{P_r}^F|$.
    \end{proof}

Consider the following normalized convolution product on $C(\fg_{P_r}^F)$:
\[f\star^\mu f(X)=
c_{\mu,r}(f\star f')(X)=
\mu(P^+_r)\sum_{Y\in\fg_{P_r}^F} f(X-Y)f'(Y)\]
with multiplicative unit 
$\mu(P^+_r)^{-1}\mathrm 1_e\in C(\fg_{P_r}^F)$.
We have a natural algebra inclusion 
\[\iota_{P_r}:(C(\fg^F_{P_r}),\star^\mu)\to  
 (C^{\infty}_c(\frac{P_r/P_r^+}{P}),*)\to  (C^{\infty}_c(\frac{G(\mathbbm k)/P_r^+}{P}),*)\]
sending $f_{\fg_{P_r}}:\fg^F_{P_r}\to\mathbb C$
to the composed map
$f_{P_r}:P_r\to P_r/P^+_r\cong\fg_{P_r}^F\stackrel{f_{\fg_{P_r}}}\to\mathbb C$.
\begin{lemma}\label{comp Lie}
   For any $P\subset Q\in Par$, there is a commutative diagram of algebra homomorphisms
   \[\xymatrix{(C^{st}(\fg_{Q_r}^F),\star^\mu)\ar[r]^{\iota_{Q_r}}\ar[d]^{\Res^{\fg_{Q_r}}_{\fg_{P_r}}}&(C^{\infty}_c(\frac{G(\mathbbm k)/Q_r^+}{Q}),*)\ar[d]^{*\delta_{P^+_r}}\\
   (C^{st}(\fg_{P_r}^F),\star^\mu)\ar[r]^{\iota_{P_r}
   }&(C^{\infty}_c(\frac{G(\mathbbm k)/P_r^+}{P}),*)}\]
\end{lemma}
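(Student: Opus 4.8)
The plan is to mirror the proof of Lemma \ref{comp} essentially verbatim, with the reductive group $\rG_Q$ replaced by the finite $\mathbb F_q$-vector space $\fg_{Q_r}$ equipped with its $\rG_Q$-action, the $F$-stable parabolic $\rB_{P,Q}\subset\rG_Q$ replaced by the subspace $\fb_{P_r,Q_r}\subset\fg_{Q_r}$, its unipotent radical $\rU_{P,Q}$ replaced by $\fn_{P_r,Q_r}$, and the vanishing of stable class functions (Theorem \ref{vanishing group}) replaced by the vanishing of stable functions on finite Lie algebras (Theorem \ref{main thm stable Lie}(3)).

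First I would compute the convolution directly. Fix $f_{\fg_{Q_r}}\in C^{st}(\fg_{Q_r}^F)$ and set $f_{Q_r}:=\iota_{Q_r}(f_{\fg_{Q_r}})$; it is supported on $Q_r$ and bi-$Q_r^+$-invariant because $Q_r^+\trianglelefteq Q_r$. Since $P\subset Q$ gives $P_r^+\subset P_r\subset Q_r$ and $P_r^+\subset Q$ (so that $Q_r^+\trianglelefteq P_r^+$), the integrand $y\mapsto f_{Q_r}(xy^{-1})\delta_{P_r^+}(y)$ vanishes identically unless $x\in Q_r$; hence $f_{Q_r}*\delta_{P_r^+}$ is supported on $Q_r$. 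For $x\in Q_r$ with image $\bar x\in Q_r/Q_r^+\cong\fg_{Q_r}^F$, a coset decomposition $P_r^+=\bigsqcup_{n\in\fn_{P_r,Q_r}^F}Q_r^+u_n$ (with $\bar u_n=n$) together with the right $Q_r^+$-invariance of $f_{Q_r}$ and the equality $\mu(Q_r^+)/\mu(P_r^+)=|\fn_{P_r,Q_r}^F|^{-1}$ yields
\[
f_{Q_r}*\delta_{P_r^+}(x)=\mu(P_r^+)^{-1}\int_{P_r^+}f_{Q_r}(xy^{-1})\,d\mu(y)=|\fn_{P_r,Q_r}^F|^{-1}\sum_{n\in\fn_{P_r,Q_r}^F}f_{\fg_{Q_r}}(\bar x+n).
\]

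Next I would invoke stability. Under the $\rG_Q$-equivariant (non-canonical) identification of $\fg_{Q_r}$ with the adjoint representation of $\rG_Q$, the chain $\fn_{P_r,Q_r}\subset\fb_{P_r,Q_r}$ matches the Lie algebras of the unipotent radical and of the parabolic $\rB_{P,Q}$; thus Theorem \ref{main thm stable Lie}(3) applies and gives $\sum_{n\in\fn_{P_r,Q_r}^F}f_{\fg_{Q_r}}(\bar x+n)=0$ for $\bar x\notin\fb_{P_r,Q_r}^F$. Combined with the previous step, $f_{Q_r}*\delta_{P_r^+}$ is supported on $P_r$ (the preimage of $\fb_{P_r,Q_r}^F$ under $Q_r\to\fg_{Q_r}^F$), and for $x\in P_r$ its value is, by the definition of the normalized parabolic restriction, $\Res^{\fg_{Q_r}}_{\fg_{P_r}}(f_{\fg_{Q_r}})$ evaluated at the image of $x$ in $P_r/P_r^+\cong\fg_{P_r}^F$ --- that is, $f_{Q_r}*\delta_{P_r^+}=\iota_{P_r}(\Res^{\fg_{Q_r}}_{\fg_{P_r}}(f_{\fg_{Q_r}}))$. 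The needed fact $\Res^{\fg_{Q_r}}_{\fg_{P_r}}(f_{\fg_{Q_r}})\in C^{st}(\fg_{P_r}^F)$ is the analogue of Theorem \ref{main thm stable Lie}(2), whose proof transfers to the spaces $\fg_{P_r},\fg_{Q_r}$ without change. Finally I would check that all four maps are homomorphisms for the normalized convolution products $\star^\mu$: $\iota_{P_r}$ and $\iota_{Q_r}$ by construction, $*\delta_{P_r^+}$ because $\delta_{P_r^+}$ is the unit of $\M^r_P$, and $\Res^{\fg_{Q_r}}_{\fg_{P_r}}$ because normalized parabolic restriction is multiplicative, the constant $c_{\mu,r}$ of Lemma \ref{constant} being exactly what makes this compatible across $\fg_{Q_r}$ and $\fg_{P_r}$.

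The step I expect to need the most care is the identification of $\fb_{P_r,Q_r}$ with the Lie algebra of a genuine parabolic subgroup of $\rG_Q$, so that Theorem \ref{main thm stable Lie}(3) is literally applicable, together with the bookkeeping of the normalization constants ($\mu(P_r^+)^{-1}$ versus $|\fn^F_{P_r,Q_r}|^{-1}$ versus $c_{\mu,r}$) needed to make the square commute as a diagram of \emph{unital} algebra maps rather than merely of vector spaces.
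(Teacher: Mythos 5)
Your proposal is correct and is exactly what the paper intends: its proof of this lemma is the single line ``Same proof as in the depth-zero case,'' and your argument is precisely the depth-zero computation of Lemma \ref{comp} transported to $\fg_{Q_r}$, with the vanishing property of Theorem \ref{vanishing group} replaced by Theorem \ref{main thm stable Lie}(3) via the identification $\omega^r:\fg_Q\cong\fg_{Q_r}$ of \eqref{identification}. The points you flag as delicate (matching $\fn_{P_r,Q_r}\subset\fb_{P_r,Q_r}$ with a genuine parabolic subalgebra, and the normalization constants) are indeed the only places where the transfer requires care, and you handle them as the paper does.
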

\begin{proof}
Same proof as in the depth-zero case.
\end{proof}

Let $\fg_P=\mathrm{Lie}(\rG_P)$
be the Lie algebra of $\rG_P$
equipped with the adjoint representation of $\rG_P$.
A choice of uniformizer $\omega\in\mathcal O_{\mathbbm k}$ induces a
$\rG_P$-equivariant isomorphism
\begin{equation}\label{identification}
   \omega^r:\fg_{P}\cong\fg_{P_r} 
\end{equation}
sending 
$\fn_{P,Q}, \fb_{P,Q}, \ft_{P}$, etc to $\fn_{P_r,Q_r}, \fb_{P_r,Q_r}, \ft_{P_r}$, etc.
The choice of the invariant form on $\mathrm{Lie}(G)(\mathbbm k)$
induces invariant forms 
on $\fg_P\cong\mathrm{Lie}(P)/\mathrm{Lie}(P^+)$ denoted by 
$\langle-,-\rangle_P$. Using the above invaraint isomorphism~\eqref{identification}, we get an invariant form on
$\fg_{P_r}$  
\begin{equation}\label{invariant form}
    \langle-,-\rangle_{P_r}:\fg_{P_r}\times\fg_{P_r}
    \to k.
\end{equation}

Let $\mathrm{FT}_{P_r}:C(\fg^F_{P_r})
\to C(\fg^F_{P_r})$ be the Fourier transform on $C(\fg^F_{P_r})$ associated to $\langle-,-\rangle_{P_r}$.

\begin{lemma}\label{j_P_r}
    For any $P\in Par$, there is an algebra homomorphism 
    \[j_{P_r}:(C^{st}(\fg_r^F),\star)\to (C^{st}(\fg_{P_r}^F),\star^\mu)\]
    such that for any $P\subset Q\in Par$ we have the following commutative diagram
    \[\xymatrix{C^{st}(\fg_r^F)\ar[r]^{j_{Q_r}}\ar[dr]^{j_{P_r}}&C^{st}(\fg_{Q_r}^F)\ar[d]^{\Res^{\fg_{Q_r}}_{\fg_{P_r}
    }}\\
    &C^{st}(\fg_{P_r}^F)}\]
\end{lemma}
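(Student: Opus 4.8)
The plan is to transcribe the proof of Lemma~\ref{j_P} to the present setting, replacing Theorem~\ref{main thm stable group} by Theorem~\ref{main thm stable Lie} and using the uniformizer-induced identification~\eqref{identification} to linearize everything. First I would observe that $\omega^r\colon\fg_P\xrightarrow{\sim}\fg_{P_r}$ is $\rG_P$-equivariant, carries $\fn_{P,Q},\fb_{P,Q},\ft_P$ onto $\fn_{P_r,Q_r},\fb_{P_r,Q_r},\ft_{P_r}$, and --- by the very definition of $\langle-,-\rangle_{P_r}$ in~\eqref{invariant form} --- intertwines $\langle-,-\rangle_P$ with $\langle-,-\rangle_{P_r}$; hence it intertwines convolutions, Fourier transforms, and the notions of stability, so that $C^{st}(\fg^F_{P_r})\cong C^{st}(\fg_P^F)$ as algebras. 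Applying Theorem~\ref{main thm stable Lie}(1) to the finite reductive group $\rG_P$ and transporting through $\omega^r$ produces an algebra isomorphism $\Psi_{P_r}\colon\mathbb C[(\ft_{P_r}//\rW_P)^F]\xrightarrow{\sim}C^{st}(\fg_{P_r}^F)$, where the source carries pointwise multiplication and the target the (un-normalized) convolution $\star$; taking $P=G(\mathcal O)$ gives $\Psi_r\colon\mathbb C[(\ft_r//\rW)^F]\xrightarrow{\sim}C^{st}(\fg_r^F)$.

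Next I would build the map on parameter spaces. Differentiating the $\rW_P$-equivariant, Frobenius-compatible isomorphism $\rT\cong\rT_P$ of the setup (equivalently, transporting the isomorphism $\ft_r\cong\ft_{P_r}$ through $\omega^r$) yields a $\rW_P$-equivariant, Frobenius-compatible isomorphism $\ft_r\cong\ft_{P_r}$, with $\rW_P$ acting on $\ft_r$ through $\rW_P\subset\rW_a\to\rW$. Together with $\rW_P\subset\rW$ this gives a morphism $\ft_{P_r}//\rW_P\to\ft_r//\rW$ and hence, on $F$-fixed points, a map $t_{P_r}\colon(\ft_{P_r}//\rW_P)^F\to(\ft_r//\rW)^F$. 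I then set
\[
j_{P_r}:=\Psi_{P_r}\circ\bigl(c_{\mu,r}^{-1}\,t_{P_r}^{*}\bigr)\circ\Psi_r^{-1}\colon C^{st}(\fg_r^F)\longrightarrow C^{st}(\fg_{P_r}^F),
\]
where $c_{\mu,r}$ is the $P$-independent constant of Lemma~\ref{constant}.

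To see that $j_{P_r}$ is an algebra homomorphism $(C^{st}(\fg_r^F),\star)\to(C^{st}(\fg_{P_r}^F),\star^\mu)$, note that $t_{P_r}^{*}$ respects pointwise multiplication, that $\Psi_r$ and $\Psi_{P_r}$ convert pointwise multiplication into $\star$ (Theorem~\ref{main thm stable Lie}(1)), and that rescaling by $c_{\mu,r}^{-1}$ converts $\star$ on the target into $\star^\mu=c_{\mu,r}\cdot\star$. Tracking units pins down the scalar: the $\star$-unit of $C^{st}(\fg_r^F)$ corresponds under $\Psi_r$ to the constant function $1$, whose pullback under $t_{P_r}^{*}$ is again $1$, which corresponds under $\Psi_{P_r}$ to $|\fg_{P_r}^F|^{1/2}\mathrm 1_e$, and $c_{\mu,r}^{-1}|\fg_{P_r}^F|^{1/2}=\mu(P_r^+)^{-1}$ is precisely the $\star^\mu$-unit.

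Finally I would verify the compatibility $\Res^{\fg_{Q_r}}_{\fg_{P_r}}\circ j_{Q_r}=j_{P_r}$ for $P\subset Q\in Par$. On the parameter side the identifications $\ft_r\cong\ft_{Q_r}\cong\ft_{P_r}$ and the chain $\rW_P\subset\rW_Q\subset\rW$ are mutually compatible, so $t_{P_r}=t_{Q_r}\circ u_{P,Q}$ with $u_{P,Q}\colon(\ft_{P_r}//\rW_P)^F\to(\ft_{Q_r}//\rW_Q)^F$ the natural map, hence $t_{P_r}^{*}=u_{P,Q}^{*}\circ t_{Q_r}^{*}$. On the function side, Theorem~\ref{main thm stable Lie}(2) applied to $\rG_Q$ with its standard parabolic having Levi $\rG_P$, and transported through $\omega^r$, says exactly that $\Psi_{P_r}\circ u_{P,Q}^{*}=\Res^{\fg_{Q_r}}_{\fg_{P_r}}\circ\Psi_{Q_r}$ (the normalized parabolic restriction corresponding to plain pullback of functions, with no extra constant). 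Combining these two facts, and using that $c_{\mu,r}$ is independent of $P$, yields the claimed identity. I do not expect a genuine obstacle here: this is a faithful translation of Lemma~\ref{j_P}, and the only point requiring care is the bookkeeping --- checking that $\omega^r$, the isomorphism $\ft_r\cong\ft_{P_r}$, the differentiated $\rT\cong\rT_P$, and the $\rW_\bullet$-actions factoring through $\rW_a$ are all compatible, while noting that, in contrast with the group case, the $P$-independence of $c_{\mu,r}$ in Lemma~\ref{constant} makes the normalizations in the compatibility square cancel cleanly.
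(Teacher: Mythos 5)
Your proposal is correct and follows essentially the same route as the paper: the paper also transports Theorem \ref{main thm stable Lie} through the identification $\omega^r$ to get the isomorphisms $\mathbb C[(\ft_{P_r}//\rW_P)^F]\cong C^{st}(\fg_{P_r}^F)$ via $z\mapsto \mathrm{FT}_{P_r}(\chi_{P_r}^*z)$, and then defines $j_{P_r}$ as $c_{\mu,r}^{-1}t_{P_r}^*$ conjugated by these isomorphisms, with compatibility of restrictions coming from part (2) of that theorem. Your additional bookkeeping on units and on the $P$-independence of $c_{\mu,r}$ is a correct elaboration of details the paper leaves implicit.
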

\begin{proof}
For any $P\in Par$, we have the Chevalley map
$\chi_{P_r}:\fg_{P_r}\to \fg_{P_r}//\rG_P\cong\ft_{P_r}//\rW_P$ and using the identification~\eqref{identification} the same proof of Theorem \ref{main thm stable Lie} implies there is an algebra isomorphism
\begin{align*}
    \mathbb C[(\ft_{P_r}//\rW_P)^F]&\xlongrightarrow{\sim} C^{st}(\fg_{P_r}^F)\\
    z&\longmapsto f_z:= \mathrm{FT}_{P_r}((\chi_{P_r}^*z)^-)
\end{align*}
compatible with parabolic restriction maps.
Note that the 
natural $\rW_P$-equivaraint map
$\ft_r\cong\ft_{P_r}$
compatible with the Frobenius endomorphisms.
It induces a map 
 \begin{equation}\label{t_P_r}
     t_{P_r}:(\ft_{P_r}//\rW_P)^F\to(\ft_{r}//\rW)^F
 \end{equation}
 and, by applying Theorem \ref{main thm stable Lie},
 we define the desired algebra homomorphism as 
 \begin{equation}\label{j_{P_r}}
     j_{P_r}:C^{st}(\fg_r^F)\cong\mathbb C[(\ft_r//\rW)^F]\stackrel{c_{\mu,r}^{-1}t_{P_r}^*
 }\longrightarrow\mathbb C[(\ft_{P_r}//\rW_P)^F]\cong C^{st}(\fg_{P_r}^F)
 \end{equation}
 where $c_{\mu,r}$ is the constant in Lemma \ref{constant}. 
\end{proof}

\begin{theorem}\label{stable lie to center}
    There is an algebra homomorphism 
    \[\xi^r:C^{st}(\fg_r^F)\to\mathcal Z^{r}(G)\]
    such that for any depth  $\leq r$ representation $(\pi,V)$ and a vector $v\in V^{P_r^+}$ we have 
    \begin{equation}\label{formula Lie}
        \xi^r(f)(v)=\mu(P^+_r)\sum_{x\in\fg_{P_r}^F}j_{P_r}(f)(x)\pi^{P_r^+}(x)v
    \end{equation}
    where $j_{P_r}:C^{st}(\fg_r^F)\to C^{st}(\fg_{P_r}^F)$ is the map in Lemma \ref{j_P_r}
    and 
    $\pi^{P_r^+}$
    denotes the natural representation of $\fg_{P_r}^F$
    on $V^{P^+}$.
\end{theorem}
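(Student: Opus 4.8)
The plan is to run the proof of Theorem~\ref{stable group to center} essentially verbatim, with finite graded Lie algebras and depth-$r$ data in place of finite reductive groups and depth-zero data. First I would assemble $\xi^r$ as a composition of three algebra homomorphisms. For each $P\in Par$, Lemma~\ref{j_P_r} provides an algebra map $j_{P_r}\colon C^{st}(\fg_r^F)\to C^{st}(\fg_{P_r}^F)$, and these are compatible with the parabolic restrictions $\Res^{\fg_{Q_r}}_{\fg_{P_r}}$ for $P\subset Q$, so they assemble into an algebra map $\lim_P j_{P_r}\colon C^{st}(\fg_r^F)\to\lim_{P\in Par}C^{st}(\fg_{P_r}^F)$, where the inverse limit is taken along parabolic restriction. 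Lemma~\ref{comp Lie} says the inclusions $\iota_{P_r}\colon C^{st}(\fg_{P_r}^F)\to\M^r_P$ intertwine $\Res^{\fg_{Q_r}}_{\fg_{P_r}}$ with the transition map $h\mapsto h*\delta_{P_r^+}$, i.e.\ with $\phi^r_{P,Q}$; hence they assemble into an algebra map $\lim_P\iota_{P_r}\colon\lim_P C^{st}(\fg_{P_r}^F)\to\lim_P\M^r_P=A^r(G)$. Composing with the isomorphism $[A^r]\colon A^r(G)\xrightarrow{\sim}\mathcal Z^r(G)$ of Theorem~\ref{main thm depth r} gives
$$\xi^r\colon C^{st}(\fg_r^F)\xrightarrow{\lim_P j_{P_r}}\lim_{P\in Par}C^{st}(\fg_{P_r}^F)\xrightarrow{\lim_P\iota_{P_r}}A^r(G)\xrightarrow{[A^r]}\mathcal Z^r(G),$$
an algebra homomorphism whose image lies in $\mathcal Z^r(G)$ because $[A^r]$ does.

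Next I would check formula~\eqref{formula Lie}. Fix $f\in C^{st}(\fg_r^F)$ and let $h=\{h_P\}_{P\in Par}\in A^r(G)$ be its image under $\lim_P(\iota_{P_r}\circ j_{P_r})$, so $\xi^r(f)=[A_h]$ and $h_P=\iota_{P_r}(j_{P_r}(f))$ is the function on $G(\mathbbm k)$ supported on $P_r$ which, under $P_r\twoheadrightarrow P_r/P_r^+\cong\fg_{P_r}^F$, equals $j_{P_r}(f)$. Given a representation $(\pi,V)$ of depth $\leq r$ and $v\in V^{P_r^+}$ we have $\delta_{P_r^+}(v)=v$, and Remark~\ref{rmkevalr} (a consequence of Proposition~\ref{evalr}) gives $[A_h](\delta_{P_r^+})=h_P$, whence
$$\xi^r(f)(v)=[A_h](v)=[A_h]\bigl(\delta_{P_r^+}(v)\bigr)=\bigl([A_h](\delta_{P_r^+})\bigr)(v)=h_P(v).$$
It remains to unwind the action $h_P(v)=\int_{G(\mathbbm k)}h_P(g)\,\pi(g)v\,d\mu(g)$: the integrand is supported on $P_r$, on which $h_P(g)=j_{P_r}(f)(x)$ depends only on the image $x\in\fg_{P_r}^F$ of $g$, and $\pi(g)v$ likewise depends only on $x$ since $v\in V^{P_r^+}$; so the integral collapses to $\mu(P_r^+)\sum_{x\in\fg_{P_r}^F}j_{P_r}(f)(x)\,\pi^{P_r^+}(x)v$, which is the right-hand side of~\eqref{formula Lie}.

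There is no real obstacle left: the substance has already been isolated in Lemmas~\ref{comp Lie} and~\ref{j_P_r} and in Theorems~\ref{main thm stable Lie} and~\ref{main thm depth r}, and what remains is to transport the depth-zero argument. The one conceptual point worth stating is that it is the \emph{stability} of $j_{P_r}(f)$, not just its $\rG_P$-invariance, that makes the construction work: it is exactly the vanishing property~\eqref{vanishing Lie algebra} of Theorem~\ref{main thm stable Lie}(3) that forces $\iota_{Q_r}(j_{Q_r}(f))*\delta_{P_r^+}$ to be supported on $P_r$ rather than only on $Q_r$, so that $\{h_P\}_{P\in Par}$ really is an element of the inverse system $A^r(G)$. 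The remaining care is bookkeeping of normalizations — the constant $c_{\mu,r}$ of Lemma~\ref{constant} built into $j_{P_r}$ and the factor $\mu(P_r^+)$ from integrating the indicator-type function $h_P$ over $P_r^+$-cosets — arranged precisely so that $\xi^r$ is unital and~\eqref{formula Lie} holds with no correction factor.
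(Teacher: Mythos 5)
Your proposal is correct and follows the paper's proof essentially verbatim: the paper likewise defines $\xi^r$ as the composite $\lim_P j_{P_r}$ followed by $\lim_P \iota_{P_r}$ and the isomorphism of Theorem~\ref{main thm depth r}, and deduces formula~\eqref{formula Lie} from Proposition~\ref{evalr}. Your unwinding of the integral defining $h_P(v)$ and your remark on why stability (via the vanishing property) is needed for $\{h_P\}$ to lie in the inverse system are accurate elaborations of steps the paper leaves implicit.
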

\begin{proof}
    Lemma \ref{comp Lie}, Lemma \ref{j_P_r}, and Theorem \ref{main thm depth r}
    together
    gives rise to an algebra homomorphism
    \[\xi^r:C^{st}(\fg_r^F)\stackrel{\lim_{P\in Par} j_{P_r}}\longrightarrow\lim_{P\in Par} C^{st}(\fg_{P_r}^F)\stackrel{\lim_{P\in  Par} \iota_{P_r}}\longrightarrow\lim_{P\in Par}\mathcal M_{P_r}\cong\mathcal Z^r(G).\]
    Formula~\eqref{formula Lie} follows from Proposition \ref{evalr}.
    The proof is completed.
    
\end{proof}

\subsection{Depth-$r$ Deligne-Lusztig parameters }\label{DL}
Theorem \ref{main thm stable group}
and 
Theorem \ref{main thm stable Lie}
imply that 
there exists natural bijections between the sets of algebra homomorphisms
$\mathrm{Hom}(C^{st}(\rG^F),\mathbb C)\cong\mathrm{Hom}(\mathbb C[(\hat\rT//\rW)^F)],\mathbb C)$ and 
$\mathrm{Hom}(C^{st}(\fg_r^F),\mathbb C)\cong\mathrm{Hom}(\mathbb C[(\ft_r//\rW)^F)],\mathbb C)$
with the sets $(\hat\rT//\rW)^F$   
of 
$F$-stable semisimple conjugacy classes in 
$\hat\rG$ and  $F$-stable depth-$r$ semisimple conjugacy classes in $\fg_r$ respectively.  

 For each  irreducible representation $(\pi,V)$  of depth-$r$, by 
composing 
the maps $\xi^r$ in 
Theorem \ref{stable group to center} and Theorem \ref{stable lie to center}
with the evaluation map $\mathcal Z^r(G)\to\mathrm{End}(\pi)=\mathbb C$, we obtain maps
\[C^{st}(\rG^F)\stackrel{\xi^0}\to\ZZ^0(G)\to\mathrm{End}(\pi)=\mathbb C\]
\[C^{st}(\fg_r^F)\stackrel{\xi^r}\to\ZZ^r(G)\to\mathrm{End}(\pi)=\mathbb C\]
and hence an element 
$\theta(\pi)$ in
$(\hat\rT//\rW)^F$
if $r=0$
or in $(\ft_r//\rW)^F$ if $r>0$. 
We will call $\theta(\pi)$
the 
\emph{depth-r Deligne-Lusztig parameter} of $\pi$,
\begin{remark}
    For most positive rational depths ($r \in \Z_{(p)} \cap \Q_{>0}$), similar construction of depth-$r$ Deligne-Lusztig parameter attached to depth-$r$ irreducible representations is done in our later work \cite[Section 6.1]{bcrationaldepths}. This enabled us to attach Deligne-Lusztig parameters to any irreducible representation in a similar way if $p$ does not divide the order of the Weyl group of $G$. 
\end{remark}
\subsection{
Minimal K-types}\label{K-types}
A depth-$r$ (unrefined) minimal $K$-type
of $G(\mathbbm k)$ is a pair $(P_r,\chi)$ where $P_r\subset P$ is the $r$-th congurence subgroup of 
$P\in Par$
and $\chi$ is a representation of $P_r/P_r^+$
such that (1) if $r=0$ then 
$\chi$ is an irreducible cuspidal representation of $\rG_P^F\cong P/P^+$
(2) if $r>0$, then $\chi$
is a non-degenerate character of $\fg_{P_r}^F\cong P_r/P_r^+$.

Using the  additive character 
$\psi$ one can identify
characters of $\fg_{P_r}^F$, $r>0$,
with elements in $ (\fg_{P_r}^*)^F=\mathrm{Hom}(\fg_{P_r}^F,\mathbb F_q)$
and a character $\chi\in(\fg_{P_r}^*)^F$
is called non-degenerate
if the clousre of the 
$\rG_P$-orbit of $\chi$ in $\fg_{P_r}^*$
does not contains the zero vector.

Let $(\pi,V)$ be an irreducible representation of depth $r$.
One of the main results in  \cite[Theorem 5.2]{MP94} say that if $V^{P^+_r}\neq 0$ then it contains a  depth-$r$ minimal $K$-type of $P$. Moreover, any two minimal $K$-types
are contained in $V$ are \emph{associated} to each other in the sense of \cite[Section 5.1]{MP94}.

 For any positive integer $r>0$, 
 let $\bar 0\in
 (\ft_r//\rW)^{F}$ be the image of the 
 zero vector $0\in\fg^F_r$
 along the projection $\fg_r^F\to(\ft_r//\rW)^F$
 and 
we denote by 
$(\ft_r//\rW)^{F,\circ}=(\ft_r//\rW)^{F}-\{\bar 0\}$ the complement of $\bar 0$.

\begin{definition}
Let $(P_r,\chi)$
    be a depth-$r$ minimal $K$-type of $G(\mathbbm k)$.
    The \emph{semi-simple part}
of $(P_r,\chi)$, denoted by 
$\theta(\chi)$, is defined as follows -\\
(1) if $r=0$ we define 
$\theta(\chi)\in(\hat\rT//\rW)^F$ to be the image of $\chi\in\mathrm{Irr}(\rG_P^F)$
along the map
\[\mathrm{Irr}(\rG_P^F)\stackrel{\mathcal L_P}\to
(\hat\rT_P//\rW_P)^F\stackrel{t_P}\to(\hat\rT//\rW)^F,\] here $\mathcal L_P$ is the Deligne-Lusztig map for $\rG_P^F$ in~\eqref{DL map}.\\
(2) if $r>0$ we define 
$\theta(\chi)\in(\ft_r//\rW)^{F,\circ}$
to be the image of $\chi\in
(\fg_{P_r}^*)^F$
along the map
\[(\fg_{P_r}^*)^F\cong\fg_{P_r}^F\to(\ft_{P_r}//\rW_P)^F\stackrel{t_{P_r}}\to(\ft_r//\rW)^F
,\] 
where the isomorphism 
$\fg_{P_r}^*\cong\fg_{P_r}$ is induced by  the invariant form
$\langle-,-\rangle_{P_r}$ in~\eqref{invariant form}
and $t_{P_r}$ is the map in~\eqref{t_P_r}.

\end{definition}

   \begin{remark}\label{remark CDT}
       The semi-simple part of minimal $K$-type defined here is same as the Deligne-Lusztig parameter attached to Moy-Prasad types in \cite{chendebackertsai} (Check Section 4.1 for positive depths and 7.1 for depth zero), where it is treated in more generality. 
   \end{remark}

\begin{proposition}\label{DL=K-types}
    Let $(\pi,V)$ be a smooth irreducible representation of depth $r$ and let $(P_r,\chi)$ be a depth-$r$ minimal $K$-type contained in $V^{P^+_r}$.
   We have $\theta(\pi)=\theta(\chi)$.
   In particular, 
   we have $\theta(\pi)\in(\ft_r//\rW)^{F,\circ}$ for $r>0$, and 
   the semi-simple part $\theta(\chi)$ is independent of the choice  of the minimal $K$-type contained in $V$.
    
    \end{proposition}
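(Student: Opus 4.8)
The plan is to reduce the identity $\theta(\pi)=\theta(\chi)$ to an evaluation of the homomorphism $\xi^r$ on the idempotent projector attached to the semi-simple part of $\chi$, using the explicit formula for $\xi^r$ in Theorem \ref{stable group to center} (for $r=0$) and Theorem \ref{stable lie to center} (for $r>0$). Recall that $\theta(\pi)$ is by definition the algebra homomorphism $C^{st}(\rG^F)\to\C$ (resp.\ $C^{st}(\fg_r^F)\to\C$) given by $f\mapsto \xi^r(f)|_V$, viewed as a point of $(\hat\rT//\rW)^F$ (resp.\ $(\ft_r//\rW)^F$) via Theorem \ref{main thm stable group} (resp.\ Theorem \ref{main thm stable Lie}). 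So it suffices to show that, for every $f$, the scalar $\xi^r(f)|_V$ equals the value at $\theta(\chi)$ of the function on $(\hat\rT//\rW)^F$ (resp.\ $(\ft_r//\rW)^F$) corresponding to $f$. Equivalently, writing $f_{\theta(\chi)}$ for the idempotent attached to $\theta(\chi)$, it suffices to show $\xi^r(f_{\theta(\chi)})|_V=1$ and $\xi^r(f_\theta)|_V=0$ for every other semi-simple parameter $\theta$.

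First I would fix a minimal $K$-type $(P_r,\chi)$ contained in $V^{P_r^+}$, whose existence is guaranteed by \cite[Theorem 5.2]{MP94}, and pick $0\neq v\in V^{P_r^+}$ lying in the $\chi$-isotypic part: in the $r>0$ case, $\pi^{P_r^+}(x)v=\psi(\langle\chi,x\rangle)v$ for $x\in\fg_{P_r}^F$ (under the identification of $\chi$ with an element of $(\fg_{P_r}^*)^F$), and in the $r=0$ case $v$ generates a copy of the cuspidal representation $\chi$ of $\rG_P^F$. Plug $v$ into formula \eqref{formula Lie} (resp.\ \eqref{formula group}). In the positive-depth case this gives
\[
\xi^r(f)(v)=\mu(P_r^+)\Big(\sum_{x\in\fg_{P_r}^F}j_{P_r}(f)(x)\,\psi(\langle\chi,x\rangle)\Big)v
=\mu(P_r^+)\,|\fg_{P_r}^F|^{1/2}\,\mathrm{FT}_{P_r}\!\big(j_{P_r}(f)\big)(-\chi)\cdot v,
\]
so $\xi^r(f)|_V$ is the value of $\mathrm{FT}_{P_r}(j_{P_r}(f))$ at $-\chi$, up to the constant $c_{\mu,r}=\mu(P_r^+)|\fg_{P_r}^F|^{1/2}$ of Lemma \ref{constant}. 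Now unwind $j_{P_r}$: by \eqref{j_{P_r}}, $j_{P_r}(f)=c_{\mu,r}^{-1}\,t_{P_r}^*(z)$ transported through the isomorphism $\mathbb C[(\ft_r//\rW)^F]\cong C^{st}(\fg_r^F)$, and by the proof of Lemma \ref{j_P_r} the stable function on $\fg_{P_r}^F$ corresponding to $t_{P_r}^*(z)$ is $\mathrm{FT}_{P_r}(\chi_{P_r}^*(t_{P_r}^*z))$. Hence $\mathrm{FT}_{P_r}(j_{P_r}(f))=c_{\mu,r}^{-1}\cdot(\mathrm{FT}_{P_r}^2)(\chi_{P_r}^*t_{P_r}^*z)$, and $\mathrm{FT}_{P_r}^2$ is the sign-reversal $g\mapsto g^-$ by Lemma \ref{properties of FT}(3). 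Evaluating at $-\chi$ cancels the sign, and the constants $c_{\mu,r}$ cancel against the $\mu(P_r^+)|\fg_{P_r}^F|^{1/2}$ in front, leaving
\[
\xi^r(f)|_V=\chi_{P_r}^*\big(t_{P_r}^*(z)\big)(\chi)=z\big(t_{P_r}(\chi_{P_r}(\chi))\big).
\]
But $t_{P_r}(\chi_{P_r}(\chi))\in(\ft_r//\rW)^F$ is exactly the image of $\chi$ along $(\fg_{P_r}^*)^F\cong\fg_{P_r}^F\to(\ft_{P_r}//\rW_P)^F\xrightarrow{t_{P_r}}(\ft_r//\rW)^F$, i.e.\ $\theta(\chi)$ by Definition. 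Since $f\leftrightarrow z$ under $C^{st}(\fg_r^F)\cong\mathbb C[(\ft_r//\rW)^F]$, this says precisely that the parameter $\theta(\pi)$ attached to $\pi$ equals $\theta(\chi)$. The $r=0$ case runs the same way with $\mathrm{FT}_{P_r}$ replaced by the Deligne-Lusztig map: formula \eqref{formula group} and the cuspidality of $\chi$, together with the characterization of stable functions via $\gamma_f(\pi)$, show that $\xi^0(f)|_V=\gamma_{j_P(f)}(\chi)$, and Part (1) of Theorem \ref{main thm stable group} plus the compatibility of $j_P$ with $t_P$ (Lemma \ref{j_P}) identify this with the value of $z$ at $t_P(\mathcal L_P(\chi))=\theta(\chi)$.

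The two "in particular" assertions are then immediate: for $r>0$, non-degeneracy of $\chi$ means the closure of the $\rG_P$-orbit of $\chi$ avoids $0$, hence $\chi_{P_r}(\chi)\neq\bar 0$ and $\theta(\chi)\in(\ft_r//\rW)^{F,\circ}$; and independence of the choice of minimal $K$-type follows because $\theta(\pi)$ depends only on $\pi$, not on the chosen $K$-type, so all minimal $K$-types in $V$ have the same semi-simple part. I expect the main obstacle to be purely bookkeeping: carefully matching the normalization constants ($\mu(P_r^+)$, $|\fg_{P_r}^F|^{1/2}$, $c_{\mu,r}$, the factor $|\fg_{P_r}^F|^{-1/2}$ hidden in the definition of $\mathrm{FT}_{P_r}$) across formula \eqref{formula Lie}, the definition of $j_{P_r}$ in \eqref{j_{P_r}}, and the two applications of $\mathrm{FT}_{P_r}$, so that everything collapses to the clean statement $\xi^r(f)|_V = z(\theta(\chi))$ with no stray scalar. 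A secondary point requiring a line of justification is that the vector $v$ can be chosen in a single $\chi$-eigenline (for $r>0$, a character, so this is automatic; for $r=0$, one uses that $V^{P^+}$ restricted to $\rG_P^F$ contains $\chi$ and that $\xi^0(f)$ acts by a scalar, so it suffices to test on one vector of a $\chi$-isotypic summand).
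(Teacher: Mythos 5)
Your proposal is correct and follows essentially the same route as the paper's proof: evaluate $\xi^r(f_z)$ on a $\chi$-eigenvector via formulas \eqref{formula group}/\eqref{formula Lie}, unwind $j_{P_r}$ through the isomorphism with $\mathbb C[(\ft_r//\rW)^F]$ and the identity $\mathrm{FT}_{P_r}^2(f)=f^-$, and conclude $z(\theta(\pi))=z(\theta(\chi))$ for all $z$. The only blemish is a pair of compensating sign slips in the positive-depth case (the sum $\sum_x j_{P_r}(f)(x)\psi(\langle\chi,x\rangle)$ equals $|\fg_{P_r}^F|^{1/2}\mathrm{FT}_{P_r}(j_{P_r}(f))(+\chi)$, not $(-\chi)$, and the paper defines $\theta(\chi)$ as the image of $-\chi$, not of $\chi$), which cancel exactly as the bookkeeping you flagged.
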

    \begin{proof}
Pick $0\neq v\in V^{P^+_r}$ such that $\pi^{P_r^+}(g)v=\chi(g)v$ for $g\in P_r/P_r^+$. 
Assume $r=0$. 
For any $z\in\mathbb C[(\hat\rT//\rW)^F]$, we have 
\[z(\theta(\pi))v=
\xi^0(f_z)(v)=\mu(P^+)\sum_{g\in\rG_P^F}j_P(f_z)(g)\chi(g)v\]
where
$f_z\in C^{st}(\rG^F)$ is the image of $z$ under the isomorphism $C[(\hat\rT//\rW)^F]\cong C^{st}(\rG^F)$ in
Theorem \ref{main thm stable group}.
On the other hand, it follows from the defintion of $j_P$ in~\eqref{eqj_P} that 
\[\sum_{g\in\rG_P^F}j_P(f_z)(g)\chi(g)v=\mu(P^+)^{-1}t_P^*(z)(\mathcal L_P(\chi))v=\mu(P^+)^{-1}z(\theta(\chi))v.\]
Thus we have 
$z(\theta(\pi))v=\mu(P^+)\mu(P^+)^{-1}z(\theta(\chi))v$ for any $z\in C[(\hat\rT//\rW)^F]$ and it implies $\theta(\pi)=\theta(\chi)$.

Assume $r>0$. 
For any $z\in\mathbb C[(\ft_r//\rW)^F]$, we have 
\[z(\theta(\pi))v=
\xi^r(f_z)(v)=(\mu(P_r^+)\sum_{g\in\fg_{P_r}^F}j_{P_r}(f_z)(g)\chi(g))v\]
where
$f_z\in C^{st}(\fg_{P_r}^F)$ is the image of $z$ under the isomorphism $\mathbb C[(\ft_r//\rW)^F]\cong C^{st}(\fg_{P_r}^F)$ in
Theorem \ref{main thm stable Lie}.
Note that we have 
\[\mu(P_r^+)\sum_{g\in\fg_{P_r}^F}j_{P_r}(f_z)(g)\chi(g)=
\mu(P_r^+)\sum_{g\in\fg_{P_r}^F}\mathrm{FT}_{P_r}(j_{P_r}(f_z))(g)\mathrm{FT}_{P_r}(\chi^{-1})(g).\]
A direct computation show that 
\[\mathrm{FT}_{P_r}(\chi^{-1})=|\fg_P^F|^{1/2}\mathbbm 1_\chi\]
where $\mathbbm 1_\chi$
is the characteristic function at $\chi\in\fg_{P_r}^F\cong (\fg_{P_r}^*)^F$.
On the other hand, 
it follows from the definition of $j_{P_r}$ in~\eqref{j_{P_r}} that
\[\mathrm{FT}_{P_r}(j_{P_r}(f_z))(g)
=c_{\mu,r}^{-1}\mathrm{FT}^2_{P_r}((\chi_{P_r}^*t_{P_r}^*(z))^-)(g)=
c_{\mu,r}^{-1}((\chi_{P_r}^*t_{P_r}^*(z))^-)(-g)=c_{\mu,r}^{-1}(\chi_{P_r}^*t_{P_r}^*(z))(g).\]
All together, we obtain 
\[z(\theta(\pi))=\mu(P_r^+)\sum_{g\in\fg_{P_r}^F}\mathrm{FT}_{P_r}(j_{P_r}(f_z))(g)\mathrm{FT}_{P_r}(\chi^{-1})(g)=\]
\[=
(\mu(P_r^+)|\fg_P^F|^{1/2}c_{\mu,r}^{-1})\sum_{g\in\fg_{P_r}^F}\chi_{P_r}^*t_{P_r}^*(z)(g)\mathrm 1_\chi(g)=\chi_{P_r}^*t_{P_r}^*(z)(\chi)=z(\theta(\chi)).\]
This implies $\theta(\pi)=\theta(\chi)$.
 \end{proof}

\printbibliography

\bigskip

\textsc{School of Mathematics, University of Minnesota, Vincent Hall, Minneapolis, MN-55455}\\
\textit{E-mail address}: \texttt{bhatt356@umn.edu}

\medskip

\textsc{School of Mathematics, University of Minnesota, Vincent Hall, Minneapolis, MN-55455}\\
\textit{E-mail address}: \texttt{chenth@umn.edu}
\end{document}